\newtheorem{example}[theorem]{Example}
\newtheorem{remark}[theorem]{Remark}
\newtheorem{hypothesis}{H}
\definecolor{doushalv}{rgb}{0.78,0.93,0.80}
\title{Capped $\ell_p$ approximations for the composite $\ell_0$ regularization problem \thanks{This research is supported in part by Guangdong Provincial Government of China through the ``Computational Science Innovative Research Team'' program, by the Natural Science Foundation of China under grants 11501584 and 11626103, by the
Natural Science Foundation of Guangdong Province under grants 2014A030310332 and 2014A030310414, and by the Fundamental Research Funds for the Central Universities of China.}}
\author{Qia Li \thanks{Guangdong Province Key Laboratory of Computational Science, School of Data and Computer Sciences, Sun Yat-sen University, Guangzhou 510275, P. R. China.}
\and Na Zhang \thanks{Department of Applied Mathematics, College of Mathematics and Informatics, South China Agricultural University, Guangzhou 510642, P. R. China (nzhsysu@gmail.com). Questions, comments, or corrections to this document may be directed to that email address.}
}
\begin{document}
\maketitle
\begin{abstract}
The composite $\ell_0$ function serves as a sparse regularizer in many applications. The algorithmic difficulty caused by the composite $\ell_0$ regularization (the $\ell_0$ norm composed with a linear mapping) is usually bypassed through approximating the $\ell_0$ norm. We consider in this paper capped $\ell_p$ approximations with $p>0$ for the composite $\ell_0$ regularization problem. For each $p>0$, the capped $\ell_p$ function converges to the $\ell_0$ norm pointwisely as the approximation parameter tends to infinity.
We point out that the capped $\ell_p$ approximation problem is essentially a  penalty  method with an $\ell_p$ penalty function for the composite $\ell_0$ problem from the viewpoint of numerical optimization. Our theoretical results stated below may shed a new light on the penalty  methods for solving the composite $\ell_0$ problem and help the design of innovative numerical algorithms. We first establish the existence of optimal solutions to the composite  $\ell_0$ regularization problem and its capped $\ell_p$ approximation problem under conditions that the data fitting function is asymptotically level stable and bounded below. Asymptotically level stable functions cover a rich class of data fitting functions encountered in practice. We then prove that the capped $\ell_p$ problem asymptotically approximates the composite $\ell_0$ problem if the data fitting function is a level bounded function composed with a linear mapping. We further show that if the data fitting function is the indicator function on an asymptotically linear set or the $\ell_0$ norm composed with an affine mapping, then the composite $\ell_0$ problem and its capped $\ell_p$ approximation problem share the same optimal solution set provided that the approximation parameter is large enough.
\end{abstract}
\begin{keywords}
nonconvex approximation, capped $\ell_p$ functions, composite $\ell_0$ regularization
\end{keywords}

\section{Introduction}
Structured sparsity regularization has been  successfully  applied to ill-conditioned inverse problems in the area of image processing, machine learning and statistics. For example, in image processing, the underlying image always becomes sparse by a properly chosen transform. Over the past decade, the $\ell_1$ norm is widely utilized to measure the sparsity. Let the proper, lower semicontinuous and bounded below function $\phi:\mathbb{R}^n\rightarrow \bar{\mathbb{R}}:=\mathbb{R}\cup\{+\infty\}$ stand for the data fitting term. Then an extensively used approach is to solve the following composite $\ell_1$ regularization problem
\begin{equation}\label{model:l1}
\min\{\phi(x)+\lambda\|Bx\|_1: x\in\mathbb{R}^n\},
\end{equation}
where $B$ is an $m\times n$ real matrix and $\lambda>0$ is a regularization parameter.
In problem \eqref{model:l1}, the composite $\ell_1$ regularizer  $\|Bx\|_1$ is applied to promoting the sparsity of the vector $Bx$. Problem \eqref{model:l1} is also known as the $\ell_1$ analysis based approach proposed in \cite{Elad-Milanfar-subinstein:IP2007,Elad:ACHA:05}. Since the $\ell_0$ norm of a vector counts the number of its nonzero entries, it is more natural to make use of the composite $\ell_0$ regularizer $\|Bx\|_0$. This leads to the following composite $\ell_0$ regularization problem
\begin{equation}\label{model:l0}
\min\{\Phi(x):=\phi(x)+\lambda\|Bx\|_0: x\in\mathbb{R}^n\}.
\end{equation}
In image restoration, it is demonstrated in \cite{DongJSC, Portilla:ConferenceIP2009, Shen-xu-zeng:acha2016, Trzasko-Mandoca:IEEEIP2009, Zhang-Dong-Lu:MathCompt:2013} that problem \eqref{model:l0} generates images with better quality than those obtained by problem \eqref{model:l1}.

The composite $\ell_0$ norm imposes computational difficulties on solving problem \eqref{model:l0}. First, finding a global minimizer of \eqref{model:l0} is known to be NP-hard in general \cite{Davis1997Adaptive,Natarajan1995Sparse,Tropp:2006} due to the $\ell_0$ norm. Moreover, although some algorithms such as greedy matching pursuit methods \cite{Needell-Tropp:2009, Tropp-Gilbert:2007} and iterative hard thresholding algorithms \cite{Blumensath-Davies2009} are very popular and efficient for the $\ell_0$ norm minimization, they can only be  applied to the non-composite $\ell_0$ regularization problem, i.e., the case when $B$ is an identity matrix. Therefore, approximations of $\ell_0$ norm are frequently used in numerical algorithms for problem \eqref{model:l0}. Many nonconvex  sparsity regularization functions may be adopted to approximate the $\ell_0$ norm \cite{Candes-Wakin-Boyd:JFAA:08,Chartrand:IP2008, Fan2002Variable,Foucart-lai:ACHA2009,Woodworth:IP2016,Zhang:ANNALS:2010,Zhang:JMLR:2010}.
The alternative approximate problems always have better structures from the viewpoint of algorithmic design. For example,  the majorization-minimization strategy can be applied to develop efficient algorithms for solving the approximate problems, see \cite{Candes-Wakin-Boyd:JFAA:08, Chouzenoux-Jezierska-Pesquet-Talbot:2013, Lu:2014} for instance.

In this paper, we consider using capped $\ell_p$ functions with $p>0$ to  approximate the $\ell_0$ function.
For $p>0$, the capped $\ell_p$ function  $\psi_\gamma: \mathbb{R}^m\rightarrow\mathbb{R}$ with $\gamma>0$ at $y\in\mathbb{R}^m$ is defined by
\begin{equation}\label{def:psi}
\psi_\gamma(y)=\sum_{i=1}^m \varphi_{\gamma}(y_i),
\end{equation}
where $\varphi_\gamma(y_i)=\min(\gamma|y_i|^p,1)$.
In fact, the scalar capped $\ell_p$ function $\varphi_\gamma$ is a piecewise function as follows
\begin{equation}\label{def:varphi}
\varphi_\gamma(t)=\begin{cases}
1, & |t|\ge \frac{1}{\gamma^{1/p}},\\
\gamma |t|^p, &\mathrm{else}.
\end{cases}
\end{equation}
We exhibit the capped $\ell_p$ function $\varphi_\gamma$ in Figure \ref{fig:varphi}.
\begin{figure}
\centering
\scalebox{0.5}{\includegraphics{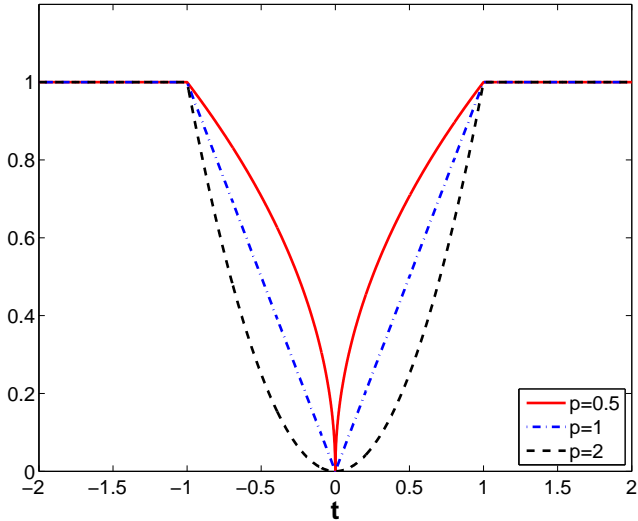}}
\caption{Capped $\ell_p$ function $\varphi_\gamma$ with $\gamma=1$ for $p=0.5, 1, 2$. }
\label{fig:varphi}
\end{figure}

The capped $\ell_1$ function \cite{Zhang:JMLR:2010}, capped $\ell_2$  function \footnote{The capped $\ell_2$ function is also referred to as   the truncated quadratic function in some literatures.}\cite{Chouzenoux-Jezierska-Pesquet-Talbot:2013}, capped $\ell_p$ functions with $0<p<1$ \cite{Nikolova-Ng-Tam:2010} and with $p\geq 1$ \cite{Fornasier-ward:foundations2010} have been successfully applied to promoting sparsity.
It is clear that $\psi_{\gamma}(y)\rightarrow \|y\|_0$ as $\gamma\rightarrow +\infty$ for any $y\in\mathbb{R}^m$, which means that $\{\psi_\gamma:\gamma>0\}$ asymptotically approximates the $\ell_0$ norm with respect to $\gamma$. By employing capped $\ell_p$ functions, the resulting approximate problem of problem \eqref{model:l0} is then given by
\begin{equation}\label{model:3}
\min\{\Psi_\gamma(x):=\phi(x)+\lambda\psi_\gamma(Bx): x\in\mathbb{R}^n\}.
\end{equation}
We remark that $\psi_\gamma$ at $y\in\mathbb{R}^m$ can be written in an equivalent form (see Appendix)
\begin{equation}\label{eq:split}
\psi_\gamma(y)=\min\{\|v\|_0+{\gamma}\|y-v\|_p^p: v\in\mathbb{R}^m\},
\end{equation}
where $\|\cdot\|_p$ with $p>0$ at $z\in\mathbb{R}^m$ is defined by $\|z\|_p=(\sum_{i=1}^m |z_i|^p)^{1/p}$.
Therefore, problem \eqref{model:3} is completely consistent with the following two variables optimization problem
\begin{equation}\label{eq:6.3.1}
\min\{\phi(x)+\lambda \gamma\|Bx-v\|_p^p+\lambda\|v\|_0: x\in\mathbb{R}^n, v\in\mathbb{R}^m\}.
\end{equation}
More precisely, if $x^*$ is a global minimizer of problem \eqref{model:3}, then there exists a $v^*\in\mathbb{R}^m$ such that $(x^*, v^*)$ is a global minimizer of problem \eqref{eq:6.3.1}.
Conversely, if the pair $(x^*, v^*)$ is a global minimizer of problem \eqref{eq:6.3.1}, then $x^*$ is a global minimizer of problem \eqref{model:3}.
It is obvious that \eqref{model:l0} can be equivalently reformulated as
\begin{equation}\label{eq:0.1}
\min\{\phi(x)+\lambda\|v\|_0: Bx=v, x\in\mathbb{R}^n, v\in\mathbb{R}^m\}.
\end{equation}
The formulations of \eqref{eq:6.3.1} and \eqref{eq:0.1} facilitate us to understand the capped $\ell_p$ approximations to the composite $\ell_0$ norm from the standpoint of numerical optimization methods. Actually,
problem \eqref{model:3} (i.e., problem \eqref{eq:6.3.1}) is essentially a  penalty  method  with the $\ell_p$ penalty function for solving problem  \eqref{model:l0} (i.e., problem \eqref{eq:0.1}). Numerical schemes such as nonconvex block coordinate decent algorithms \cite{Attouch-bolt-redont-soubeyran:2010,attouch-Convergence,Grippof-Sciandrone:2010} can be easily and efficiently adapted for solving problem \eqref{eq:6.3.1} especially in the case of $p=2$.

As usual,  we use ``optimal solutions'' for global minimizers and ``optimal solution set'' for the set of all global minimizers. To the best of our knowledge, there is little theory concerning optimal solutions to problems \eqref{model:l0} and \eqref{model:3} generally. However, some theoretical findings in the literature are related to this topic. In \cite{Fornasier-ward:foundations2010}, optimal solutions to problems \eqref{model:l0} and \eqref{model:3} are investigated in the special case where $B$ is identity, $p\geq 1$ and $\phi$ is  quadratic. If $\phi$ satisfies conditions like the restricted isometry property \cite{candes2008RIP} and $\lambda$ is larger than a threshold, it is shown in \cite{Fornasier-ward:foundations2010} that optimal solutions to problem \eqref{model:3} can be asymptotically obtained by problem \eqref{model:l0}. This asymptotic convergence results may be derived for arbitrary $B\in\mathbb{R}^{m \times n}$ and $p=2$ under the condition that $\phi$ is level bounded, by generalizing the analysis of \cite{Chouzenoux-Jezierska-Pesquet-Talbot:2013}, in which the authors employ the $\ell_0-\ell_2$ regularizer to approximate the $\ell_0$ norm.
However, in practice the conditions required by \cite{Fornasier-ward:foundations2010} or \cite{Chouzenoux-Jezierska-Pesquet-Talbot:2013} are usually not satisfied. It will be much better if we can establish theoretical results under mild conditions which are generally satisfied in various applications.

Our main contributions in this paper are summarized below. We expect they can shed a new light on penalty methods for solving problem \eqref{model:l0} and  give rise to innovative numerical schemes.
\begin{itemize}
\item We establish the existence of optimal solutions to problems \eqref{model:l0} and \eqref{model:3} under the conditions that $\phi$ is asymptotically level stable and bounded below. As it will be shown in Section \ref{sec:als}, the function $\phi$ is always asymptotically level stable in a wide range of applications.

\item We obtain that if $\phi$ is a level bounded function composed with a linear mapping, then problem \eqref{model:3} asymptotically approximates problem \eqref{model:l0} as $\gamma$ goes to infinity. Let $\{\gamma^k>0: k\in\mathbb{N}\}$ be an increasing sequence going to infinity and $x^k\in\arg\min\Psi_{\gamma^k}$. Then, $\min\Psi_{\gamma^k}\rightarrow\min\Phi$ as $k\rightarrow+\infty$ and any cluster point of $\{x^k: k\in\mathbb{N}\}$ is an optimal solution to problem \eqref{model:l0}. We emphasis here that we establish the asymptotic convergence results under conditions that $\phi$ is a level bounded function composed with a linear mapping, which covers a more general class of data fitting functions than those in \cite{Fornasier-ward:foundations2010} and \cite{Chouzenoux-Jezierska-Pesquet-Talbot:2013}.
\item We provide two cases where problem \eqref{model:3} is an exact approximation to problem \eqref{model:l0} when $\gamma$ is sufficiently large. More precisely, we show that if $\phi$ is the indicator function on an asymptotically linear set or
    the $\ell_0$ function composed with an affine mapping, then there exists a $\gamma^*>0$ such that both problems share the same optimal solution set for $\gamma>\gamma^*$.
\end{itemize}

The remaining part of this paper is organized as follows. In Section 2 we provide some preliminary results on asymptotically level stable functions. The existence of optimal solutions to problems \eqref{model:l0} and \eqref{model:3} are investigated in Section 3.
Section 4 establishes that problem \eqref{model:3} asymptotically approximates problem \eqref{model:l0}. Stability for problem \eqref{model:l0} are discussed in Section 5 and two cases are provided in Section 6 where problem \eqref{model:3} is an exact approximation to problem \eqref{model:l0} when $\gamma$ is large enough. We conclude this paper in Section 7.

\section{Asymptotically level stable functions}\label{sec:als}
In this section, we provide some preliminary results on asymptotically level stable functions. We show that in various applications the data fitting functions $\phi$ are usually asymptotically level stable. The notion of asymptotically level stable functions serves as a basis for establishing existence of optimal solutions to optimization problems in Section 3 and Section 5.

We first recall the notion  of asymptotically level stable functions. For concise presentation, we denote by $\mathbb{N}$ all the positive integers, that is $\mathbb{N}:=\{1,2,\dots\}$. Further, for any positive integer $k$ we define
$$
\mathbb{N}_k:=\{1,2,\dots, k\} \mathrm{~~and~~} \mathbb{N}_k^0:=\{0,1,\dots, k\}.
$$
Denote by $\mathrm{lev}(f, \alpha)$ the level set of $f:\mathbb{R}^n\rightarrow \bar{\mathbb{R}}$ at $\alpha\in\mathbb{R}$, that is $\mathrm{lev}(f, \alpha):=\{x\in\mathbb{R}^n: f(x)\le \alpha\}$.
The following definition is taken from 
\cite{Teboulle:asympt2003}.
\begin{definition}\label{def:als}
Let $f:\mathbb{R}^n\rightarrow\bar{\mathbb{R}}$ be lower semicontinuous and proper. Then $f$ is said to be asymptotically level stable if for each $\rho>0$, each bounded sequence of
reals $\{t^k: k\in\mathbb{N}\}$ and each sequence
$\{x^k: k\in\mathbb{N}\}$ satisfying
\begin{equation}\label{eq:alsCondition}
x^k\in\mathrm{lev}(f, t^k), \|x^k\|_2\rightarrow+\infty, x^k\|x^k\|_2^{-1}\rightarrow \bar{x}\in\mathrm{ker}(f_\infty),
\end{equation}
where $f_\infty$ denotes the asymptotic function (one can refer to Definition 2.5.1 of \cite{Teboulle:asympt2003}) of $f$, there exists $k_0\in\mathbb{N}$ such that
$$
x^k-\rho\bar{x}\in\mathrm{lev}(f, t^k)
$$
for any $k\ge k_0$.
\end{definition}

 A trivial case of asymptotically level stable functions is that for each bounded sequence of reals $\{t^k: k\in\mathbb{N}\}$, there exists no sequence $\{x^k: k\in\mathbb{N}\}$ satisfying \eqref{eq:alsCondition}.
Recall that a proper function $f:\mathbb{R}^n\rightarrow \bar{\mathbb{R}}$ is called level bounded if $\lim_{\|x\|_2\rightarrow +\infty}f(x)=+\infty$. Obviously, level bounded functions are asymptotically level stable. However, in many applications the loss functions are level bounded functions composed with a linear mapping, which are not necessarily level bounded.

We present a proposition regarding the composition of a level bounded function and a linear mapping.
\begin{proposition}\label{prop:leveBoun}
Let $A\in\mathbb{R}^{t\times n}$ and $g:\mathbb{R}^t\rightarrow \bar{\mathbb{R}}$ be proper, lower semicontinuous and level bounded.  Then $f:=g\circ A$ is asymptotically level stable.
\end{proposition}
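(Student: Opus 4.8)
The plan is to verify Definition~\ref{def:als} for $f:=g\circ A$ directly, exploiting the boundedness of the level sets of $g$; I want to stress in advance that the asymptotic function $f_\infty$ never needs to be computed. First I would record the structural facts: since $A$ is linear (hence continuous) and $g$ is lower semicontinuous, $f=g\circ A$ is lower semicontinuous, and it is proper because $g$ is proper and the range of $A$ meets $\mathrm{dom}\,g$ (otherwise $f\equiv+\infty$, a degenerate case we may exclude), so Definition~\ref{def:als} applies to $f$. Next, fix $\rho>0$, a bounded sequence of reals $\{t^k:k\in\mathbb{N}\}$, and a sequence $\{x^k:k\in\mathbb{N}\}$ satisfying \eqref{eq:alsCondition}; if no such sequence exists, then $f$ is asymptotically level stable by the trivial case noted just after Definition~\ref{def:als}, so we may assume one does. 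Picking $M\in\mathbb{R}$ with $t^k\le M$ for all $k$, we get $g(Ax^k)=f(x^k)\le t^k\le M$, so $Ax^k\in\mathrm{lev}(g,M)$ for every $k$.

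The crux is then immediate: since $g$ is level bounded, $\mathrm{lev}(g,M)$ is bounded, hence $\{Ax^k:k\in\mathbb{N}\}$ is bounded; combining this with $\|x^k\|_2\to+\infty$ and the continuity of $A$ gives $A\bar{x}=\lim_{k\to\infty}A(x^k\|x^k\|_2^{-1})=\lim_{k\to\infty}(Ax^k)\|x^k\|_2^{-1}=0$. (Only the first two conditions in \eqref{eq:alsCondition} are used here; the membership $\bar{x}\in\mathrm{ker}(f_\infty)$ plays no role.) Since $A\bar{x}=0$, for every $k\in\mathbb{N}$ we have $f(x^k-\rho\bar{x})=g(Ax^k-\rho A\bar{x})=g(Ax^k)=f(x^k)\le t^k$, so $x^k-\rho\bar{x}\in\mathrm{lev}(f,t^k)$ for all $k$; taking $k_0=1$ completes the argument.

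There is no serious obstacle here; the only mildly delicate point is the innocuous properness caveat. What really carries the proof is the observation that level boundedness of $g$ forces $\{Ax^k\}$ to be bounded and therefore $A\bar{x}=0$, which makes the perturbation $-\rho\bar{x}$ invisible to $f=g\circ A$, so that no information about $f_\infty$ or $\mathrm{ker}(f_\infty)$ is needed at all.
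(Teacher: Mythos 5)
Your proof is correct and follows essentially the same route as the paper: both arguments hinge on the fact that level boundedness of $g$ together with the bounded sequence $\{t^k\}$ forces $\{Ax^k\}$ to be bounded, whence $A\bar{x}=0$ and the perturbation $-\rho\bar{x}$ leaves $f=g\circ A$ unchanged. The only (cosmetic) difference is that you obtain $A\bar{x}=0$ directly from $(Ax^k)\|x^k\|_2^{-1}\rightarrow 0$, whereas the paper reaches the equivalent conclusion $\bar{x}\in\mathrm{ker}(A)$ via the orthogonal decomposition $x^k=u^k+w^k$ with $u^k\in\mathrm{ker}(A)^\perp$; your shortcut also lets you dispense with the paper's separate treatment of the case $\mathrm{rank}(A)=n$.
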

\begin{proof}
Clearly, $f$ is proper and lower semicontinuous. It is obvious that when $\mathrm{rank}(A)=n$,  $f$ is level bounded due to the level boundedness of $g$.   Therefore, $f$ is asymptotically level stable since there does not exist any sequence $\{x^k: k\in\mathbb{N}\}$ satisfying the conditions in Definition \ref{def:als}.

We next study the case when $\mathrm{rank}(A)<n$. Let $\rho>0$ and $\{t^k: k\in\mathbb{N}\}$ be any bounded sequence of reals. Let $\{x^k: k\in\mathbb{N}\}$ be any sequence satisfying \eqref{eq:alsCondition}. By Definition \ref{def:als}, if  $\bar{x}\in\mathrm{ker}(A)$ then $f$ is asymptotically level stable.  Thus, we dedicate to proving $\bar{x}\in\mathrm{ker}(A)$ following.

For any $k\in\mathbb{N}$, the vector $x^k$ has the unique  decomposition $x^k=u^k+w^k$, where $u^k\in\mathrm{ker}(A)^\perp$ and $w^k\in\mathrm{ker}(A)$. Since $\{t^k: k\in\mathbb{N}\}$ is bounded,  $\{Ax^k: k\in\mathbb{N}\}$ is bounded due to the level boundedness of $g$. Thus, $\{u^k: k\in \mathbb{N}\}$ is bounded. Then, $\lim_{k\rightarrow \infty}\frac{w^k}{\|x^k\|_2}=\lim_{k\rightarrow \infty}\frac{x^k}{\|x^k\|_2}=\bar{x}$. Therefore,  $\bar{x}\in\mathrm{ker}(A)$ since
$\frac{w^k}{\|x^k\|_2}\in\mathrm{ker}(A)$.
Then, we complete the proof.
\end{proof}

In applications when the  noise obeys the two-point distribution or the multipoint distribution, it is very effective to involve the $\ell_0$ norm in the data fitting function. Clearly, the $\ell_0$ norm is not a level bounded function. We next prove that the $\ell_0$ function composed with an affine mapping is also asymptotically level stable.
\begin{proposition}\label{prop:l0ALS}
Let $A\in\mathbb{R}^{t\times n}$ and $b\in\mathbb{R}^t$. Then $f:=\|A\cdot-b\|_0$ is asymptotically level stable.
\end{proposition}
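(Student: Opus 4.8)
The plan is to mimic the structure of the proof of Proposition \ref{prop:leveBoun}: check that $f$ is proper and lower semicontinuous, dispose of a trivial case, and then in the remaining case verify that any sequence satisfying \eqref{eq:alsCondition} forces $\bar x\in\mathrm{ker}(f_\infty)$ to have a property strong enough that $x^k-\rho\bar x$ stays in the same level set eventually. First I would note that $f=\|A\cdot-b\|_0$ is everywhere finite, hence proper, and lower semicontinuous because $\|\cdot\|_0$ is lower semicontinuous (its sublevel sets are finite unions of coordinate subspaces, hence closed) and composition with the continuous affine map $x\mapsto Ax-b$ preserves this. Since $f$ takes only the finitely many integer values $0,1,\dots,t$, every bounded sequence of reals $\{t^k\}$ effectively restricts the level sets to $\mathrm{lev}(f,\lfloor\sup_k t^k\rfloor)$; I would use this to reduce to a fixed level set.

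Next I would compute $f_\infty$. The key observation is that for any $x$ with $Ax=0$ (equivalently $x\in\ker A$), the value $f(\cdot)=\|A\cdot-b\|_0$ is \emph{constant} along the ray $z+rx$: indeed $A(z+rx)-b=Az-b$ for all $r$. More generally, because $\|\cdot\|_0$ is lower semicontinuous and positively homogeneous of degree $0$, one can show $f_\infty(\bar x)=0$ if $A\bar x=0$ and $f_\infty(\bar x)=+\infty$ otherwise; in any case $\mathrm{ker}(f_\infty)\subseteq\ker A$. (I would verify this from the definition of the asymptotic function: $f_\infty(\bar x)=\liminf_{x'\to\bar x,\ r\to+\infty} f(rx')/r$, and since $f$ is bounded above by $t$, $f_\infty(\bar x)\le 0$; the value is $0$ precisely when $\bar x\in\ker A$, because if $A\bar x\ne 0$ then $\|A(rx')-b\|_0\ge 1$ eventually along such sequences, but divided by $r\to\infty$ this still gives $0$ — so actually $\mathrm{ker}(f_\infty)$ may be all of $\mathbb R^n$; what matters is only that $\bar x\in\mathrm{ker}(f_\infty)$ is the hypothesis we are handed, and I must work with whatever $\bar x$ arises.) The cleaner route, paralleling the previous proposition, is: take $\{x^k\}$ satisfying \eqref{eq:alsCondition}, so $x^k\in\mathrm{lev}(f,t^k)$, $\|x^k\|_2\to\infty$, and $x^k/\|x^k\|_2\to\bar x\in\mathrm{ker}(f_\infty)$. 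I want to show $\bar x\in\ker A$, for then $A(x^k-\rho\bar x)-b=Ax^k-b$, so $f(x^k-\rho\bar x)=f(x^k)\le t^k$ for \emph{all} $k$, and we are done with $k_0=1$.

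So the crux reduces to: does $\bar x\in\mathrm{ker}(f_\infty)$ imply $A\bar x=0$? I would argue this directly. Suppose $A\bar x\ne 0$; pick a coordinate $i$ with $(A\bar x)_i\ne 0$. Along $x^k/\|x^k\|_2\to\bar x$ we have $(Ax^k)_i/\|x^k\|_2\to (A\bar x)_i\ne 0$, so $(Ax^k)_i\to\pm\infty$, hence $(Ax^k-b)_i\ne 0$ for large $k$. This alone does not contradict $x^k\in\mathrm{lev}(f,t^k)$, so boundedness of $\{t^k\}$ is not by itself enough — and here I expect the real work to lie: one must exploit the definition of $\mathrm{ker}(f_\infty)$ more carefully rather than just the level-set membership. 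The right move is to decompose $x^k=u^k+w^k$ with $u^k\in(\ker A)^\perp$, $w^k\in\ker A$, exactly as in Proposition \ref{prop:leveBoun}; then show that if $\bar x\notin\ker A$ its $(\ker A)^\perp$-component is nonzero, forcing $\|u^k\|_2\to\infty$, and then use the definition $f_\infty(\bar x)=\liminf f(t_k y_k)/t_k$ over $y_k\to\bar x$, $t_k\to\infty$, together with the fact that on $(\ker A)^\perp$ the map $A$ is injective so $\|Au^k\|_0\ge 1$ eventually — but since we divide by $t_k\to\infty$ this gives $f_\infty(\bar x)=0$ regardless, meaning $\mathrm{ker}(f_\infty)=\mathbb R^n$ and the "$\bar x\in\ker A$" route \emph{fails}.

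Given that, I would instead argue the asymptotic level stability \emph{without} claiming $\bar x\in\ker A$, by showing $f$ is in fact \emph{constant} on the translates that matter: the sharpest statement is that $f(x)=f(x+v)$ for every $v$ with $Av=0$, so if $\bar x\in\ker(f_\infty)\subseteq\ker A$ — which \emph{is} the correct inclusion once one checks $f_\infty$ equals $0$ on $\ker A$ and $+\infty$ off it via the exact definition (the "$+\infty$ off $\ker A$" part coming from: for $y_k\to\bar x$ with $A\bar x\ne0$, $\|A(t_ky_k)-b\|_0$ is bounded by $t$ but the \emph{indicator}-type blow-up is absent — so actually $f_\infty\equiv 0$ and $\ker(f_\infty)=\mathbb R^n$). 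The honest resolution, and the step I flag as the main obstacle, is to correctly identify $f_\infty$: I claim $f_\infty$ is the indicator of $\ker A$ (value $0$ on $\ker A$, $+\infty$ elsewhere) because $\|A(t_ky_k)-b\|_0/t_k\to 0$ forces the liminf computation to detect the \emph{recession} behavior, and $\|\cdot\|_0$ composed with an affine map recedes to $+\infty$ in any direction $\bar x$ with $A\bar x\ne0$ in the sense that $f(t y) = \|tAy - b\|_0$ need not go to $\infty$, so one must use that the asymptotic function of $\|\cdot\|_0$ is $\delta_{\{0\}}$. Then $\mathrm{ker}(f_\infty)=\ker A$, $A(x^k-\rho\bar x)-b=Ax^k-b$, hence $f(x^k-\rho\bar x)=f(x^k)\le t^k$ for all $k\ge 1$, completing the proof. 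I would write the final version cleanly around this last identity, relegating the $f_\infty$ computation to a short lemma or citing Definition 2.5.1 of \cite{Teboulle:asympt2003}.
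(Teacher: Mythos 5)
Your proposal ultimately rests on the claim that $f_\infty$ is the indicator function of $\ker A$, so that $\bar{x}\in\mathrm{ker}(f_\infty)$ forces $A\bar{x}=\mathbf{0}_t$ and hence $A(x^k-\rho\bar{x})-b=Ax^k-b$. That claim is false, and you in fact refute it yourself midway through: since $0\le f\le t$ is bounded, the representation \eqref{eq:AsyFunAna} gives $f_\infty(x)=\liminf_{x'\rightarrow x,\, r\rightarrow+\infty} f(rx')/r=0$ for every $x$ (this is also Lemma \ref{lema:PhiInfty} with $g=0$ and $h=f$). Hence $\mathrm{ker}(f_\infty)=\mathbb{R}^n$, the hypothesis $\bar{x}\in\mathrm{ker}(f_\infty)$ carries no information beyond what \eqref{eq:alsCondition} already gives, and the identity $A(x^k-\rho\bar{x})-b=Ax^k-b$ fails whenever $A\bar{x}\neq\mathbf{0}_t$. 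The oscillation in your text between ``so actually $f_\infty\equiv 0$ and the $\bar x\in\ker A$ route fails'' and ``then $\mathrm{ker}(f_\infty)=\ker A$, completing the proof'' is never resolved, and the final conclusion adopts the wrong branch.

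The missing idea is a coordinate-wise splitting that makes no use of $\ker A$ or of $f_\infty$ at all. Set $\Lambda:=\mathrm{supp}(A\bar{x})$. For $i\notin\Lambda$ you have $(A(x^k-\rho\bar{x})-b)_i=(Ax^k-b)_i$, exactly as you intend. For $i\in\Lambda$ you already observed the key fact --- $(Ax^k)_i\rightarrow\pm\infty$, so $(Ax^k-b)_i\neq 0$ for all large $k$ --- but you discarded it as ``not enough.'' It is enough: it says $\|(Ax^k-b)_i\|_0=1$ is already the maximal possible value of the $i$-th summand, so $\|(A(x^k-\rho\bar{x})-b)_i\|_0\le 1=\|(Ax^k-b)_i\|_0$ holds trivially for those $k$. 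Summing over $i$ gives $\|A(x^k-\rho\bar{x})-b\|_0\le\|Ax^k-b\|_0\le t^k$ for all $k\ge k_0$, which is the required conclusion. This is precisely the paper's proof; nothing about $f_\infty$ beyond properness and lower semicontinuity of $f$ is needed.
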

\begin{proof}
One can check that $f$ is proper and lower semicontinuous. Let $\rho>0$ and $\{t^k: k\in\mathbb{N}\}$ be any real bounded sequence. Let $\{x^k: k\in\mathbb{N}\}$ be any sequence satisfying \eqref{eq:alsCondition}. Set $\Lambda:=\mathrm{supp}(A\bar{x})$ and $\Lambda^C:=\mathbb{N}_t\backslash \Lambda$. Then for $i\in\Lambda^C$, $(A(x^k-\rho\bar{x})-b)_i=(Ax^k-b)_i$.
For $i\in\Lambda$, $(Ax^k)_i\rightarrow +\infty$ or $(Ax^k)_i\rightarrow -\infty$  since $\|x^k\|_2\rightarrow +\infty$ and $x^k\|x^k\|_2^{-1}\rightarrow \bar{x}$. Thus, there exists $k_0>0$ such that $(Ax^k)_i-b_i\neq 0$ for all $k\ge k_0$ and all $i\in\Lambda$. Then, there holds $\|(A(x^k-\rho\bar{x})-b)_i\|_0\le\|(Ax^k-b)_i\|_0=1$ for all $k\ge k_0$ and all $i\in\Lambda$. Therefore, we obtain that
\begin{eqnarray}
\|A(x^k-\rho\bar{x})-b\|_0&=&\sum_{i\in\Lambda}\|(A(x^k-\rho\bar{x})-b)_i\|_0+\sum_{i\in\Lambda^C}\|(A(x^k-\rho\bar{x})-b)_i\|_0\\
&\le &\sum_{i\in\Lambda}\|(Ax^k-b)_i\|_0+\sum_{i\in\Lambda^C}\|(Ax^k-b)_i\|_0\\
&=&\|Ax^k-b\|_0\\
&\le & t^k
\end{eqnarray}
holds for any $k\ge k_0$. This proposition follows immediately.
\end{proof}

We further  prove in the next proposition that  the sum of an asymptotically level stable function and the $\ell_0$ function is also asymptotically level stable. We require to study the asymptotic function. Recall that, for a proper function $f:\mathbb{R}^n\rightarrow \bar{\mathbb{R}}$, a functional analytic representation of the asymptotic function $f_\infty$ defined at $x\in\mathbb{R}^n$ is given by (Theorem 2.5.1 of \cite{Teboulle:asympt2003})
\begin{equation}\label{eq:AsyFunAna}
f_{\infty}(x)=\mathrm{liminf}_{x'\rightarrow x, t\rightarrow +\infty} \frac{f(tx')}{t}.
\end{equation}
With the help of the above analytic representation, we obtain the following lemma.

\begin{lemma}\label{lema:PhiInfty}
Let $g:\mathbb{R}^n\rightarrow \bar{\mathbb{R}}$ be proper and $h:\mathbb{R}^n\rightarrow \mathbb{R}$ be a bounded function.  Let $f:=g+h$. Then
 $f_\infty=g_\infty.$
\end{lemma}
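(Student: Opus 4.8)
The plan is to argue directly from the analytic representation \eqref{eq:AsyFunAna} of the asymptotic function. First I would check that $f:=g+h$ is proper, so that \eqref{eq:AsyFunAna} is applicable to $f$ as well as to $g$: since $h$ is finite-valued and bounded, $\mathrm{dom}\,f=\mathrm{dom}\,g\neq\emptyset$ and $f(x)>-\infty$ for every $x\in\mathbb{R}^n$, so $f$ inherits properness from $g$. It therefore suffices to prove that, for each fixed $x\in\mathbb{R}^n$,
\[
\mathrm{liminf}_{x'\rightarrow x,\, t\rightarrow +\infty}\frac{f(tx')}{t}=\mathrm{liminf}_{x'\rightarrow x,\, t\rightarrow +\infty}\frac{g(tx')}{t}.
\]

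The key observation is that the contribution of $h$ to the quotient vanishes in the limit. Let $M>0$ satisfy $|h(y)|\le M$ for all $y\in\mathbb{R}^n$. Then for every $x'\in\mathbb{R}^n$ and every $t>0$ we have $|h(tx')/t|\le M/t$, so along any sequences $x'_k\rightarrow x$ and $t_k\rightarrow +\infty$ the term $h(t_kx'_k)/t_k$ tends to $0$. Writing $f(t_kx'_k)/t_k=g(t_kx'_k)/t_k+h(t_kx'_k)/t_k$ and using the elementary fact that adding a null sequence does not affect the $\mathrm{liminf}$, we get $\mathrm{liminf}_k f(t_kx'_k)/t_k=\mathrm{liminf}_k g(t_kx'_k)/t_k$. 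Taking the infimum over all admissible sequences (equivalently, passing to the double limit $\delta\downarrow 0$, $T\uparrow+\infty$ in the definition of the joint $\mathrm{liminf}$) yields the displayed equality, and hence $f_\infty(x)=g_\infty(x)$ for every $x\in\mathbb{R}^n$.

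There is essentially no hard step here: the only points needing a moment's care are verifying that $f$ is proper so that \eqref{eq:AsyFunAna} applies, and the routine remark that the joint $\mathrm{liminf}$ over $x'\rightarrow x$, $t\rightarrow+\infty$ is insensitive to the addition of the uniformly vanishing quantity $h(tx')/t$. I do not anticipate any genuine obstacle.
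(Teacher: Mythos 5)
Your proof is correct and follows the same route as the paper: both apply the analytic representation \eqref{eq:AsyFunAna} and observe that the bounded term contributes $h(tx')/t\rightarrow 0$, so the $\liminf$ is unchanged. Your write-up merely spells out the vanishing of this term in slightly more detail than the paper does.
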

\begin{proof}
It is clear that $f$ is proper since $g$ is proper and $h$ is bounded. Then  $f$ has the asymptotic function.
According to \eqref{eq:AsyFunAna},  for any $x\in\mathbb{R}^n$,
$$
\begin{array}{rcl}
f_\infty(x)&=&\liminf_{x'\rightarrow x, t\rightarrow +\infty} \frac{f(tx')}{t}\\
&=&\liminf_{x'\rightarrow x, t\rightarrow +\infty} \frac{g(tx')+h(tx')}{t}\\
&=&\liminf_{x'\rightarrow x, t\rightarrow +\infty} \frac{g(tx')}{t}\\
&=&g_\infty (x).
\end{array}
$$
We then get this lemma.
\end{proof}
\begin{proposition}\label{prop:Asy+L0asl}
Let $A\in\mathbb{R}^{t\times n}$, $b\in\mathbb{R}^t$ and $\lambda>0$.  If $g:\mathbb{R}^n\rightarrow\bar{\mathbb{R}}$ is asymptotically level stable, then $f:=g+\lambda\|A\cdot-b\|_0$  is also asymptotically level stable.
\end{proposition}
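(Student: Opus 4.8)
The plan is to verify Definition \ref{def:als} directly for $f = g + \lambda\|A\cdot - b\|_0$, exploiting the fact that the $\ell_0$-term is bounded (it takes values in $\{0,\lambda,2\lambda,\dots,t\lambda\}$), so it can be absorbed into the ``bounded sequence of reals'' that appears in the definition. First I would fix $\rho > 0$, a bounded real sequence $\{t^k : k\in\mathbb{N}\}$, and a sequence $\{x^k : k\in\mathbb{N}\}$ satisfying \eqref{eq:alsCondition} for $f$, i.e.\ $x^k \in \mathrm{lev}(f, t^k)$, $\|x^k\|_2 \to +\infty$, and $x^k\|x^k\|_2^{-1} \to \bar{x} \in \mathrm{ker}(f_\infty)$. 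By Lemma \ref{lema:PhiInfty} applied with $h = \lambda\|A\cdot - b\|_0$ (a bounded function), we have $f_\infty = g_\infty$, so $\bar{x} \in \mathrm{ker}(g_\infty)$.

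Next I would record that $g(x^k) = f(x^k) - \lambda\|Ax^k - b\|_0 \le t^k$, so $x^k \in \mathrm{lev}(g, \tilde{t}^k)$ where $\tilde{t}^k := t^k$ (since $g \le f$ pointwise, in fact $x^k \in \mathrm{lev}(g,t^k)$ already); thus $\{x^k\}$ satisfies \eqref{eq:alsCondition} for $g$ with the same bounded real sequence $\{t^k\}$. Since $g$ is asymptotically level stable, there is $k_1 \in \mathbb{N}$ with $x^k - \rho\bar{x} \in \mathrm{lev}(g, t^k)$, i.e.\ $g(x^k - \rho\bar{x}) \le t^k$, for all $k \ge k_1$. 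Separately, I would invoke Proposition \ref{prop:l0ALS}: the function $\|A\cdot - b\|_0$ is asymptotically level stable, and since its $\ell_0$-values are bounded we may take the constant sequence (say $t^k \equiv t$) in its defining condition; combined with $\bar{x}\in\mathrm{ker}(g_\infty) = \mathrm{ker}(f_\infty)$ being the relevant direction, the argument inside the proof of Proposition \ref{prop:l0ALS} shows that for all large $k$, $\|A(x^k - \rho\bar{x}) - b\|_0 \le \|Ax^k - b\|_0$. (Concretely: on $\Lambda := \mathrm{supp}(A\bar{x})$ the coordinates of $Ax^k - b$ blow up and stay nonzero for $k$ large, while on $\Lambda^C$ the $\rho\bar{x}$-perturbation changes nothing since $(A\bar{x})_i = 0$ there.) So there is $k_2$ with $\lambda\|A(x^k - \rho\bar{x}) - b\|_0 \le \lambda\|Ax^k - b\|_0$ for $k \ge k_2$.

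Finally I would combine the two estimates: for $k \ge \max\{k_1, k_2\}$,
$$
f(x^k - \rho\bar{x}) = g(x^k - \rho\bar{x}) + \lambda\|A(x^k-\rho\bar{x})-b\|_0 \le t^k + \lambda\|Ax^k - b\|_0.
$$
This is not quite $\le t^k$, so a small refinement is needed: one should instead run the asymptotic-level-stability of $g$ not with $t^k$ but with the sharper bound $s^k := t^k - \lambda\|Ax^k-b\|_0 = g(x^k) + $ (the exact gap), noting $\{s^k\}$ is still bounded and $x^k \in \mathrm{lev}(g, s^k)$; then $g(x^k - \rho\bar{x}) \le s^k$ gives $f(x^k - \rho\bar{x}) \le s^k + \lambda\|A(x^k-\rho\bar{x})-b\|_0 \le s^k + \lambda\|Ax^k-b\|_0 = t^k$ for $k$ large, which is exactly $x^k - \rho\bar{x} \in \mathrm{lev}(f, t^k)$.

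The main obstacle is precisely this bookkeeping in the last step: one cannot naively add the two level-set memberships because the level parameters do not match, so the $\ell_0$-budget must be subtracted off \emph{before} invoking asymptotic level stability of $g$, and one must confirm that the adjusted sequence $\{s^k\}$ remains bounded (it does, since $0 \le \lambda\|Ax^k-b\|_0 \le \lambda t$) and that $\{x^k\}$ still lies in the corresponding level sets of $g$ (it does, since $g(x^k) = f(x^k) - \lambda\|Ax^k-b\|_0 \le t^k - \lambda\|Ax^k-b\|_0 = s^k$). Everything else is an application of Lemma \ref{lema:PhiInfty} and the coordinate-splitting idea already used in Proposition \ref{prop:l0ALS}.
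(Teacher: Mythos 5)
Your proposal is correct and, after the self-correction in the final paragraph, coincides with the paper's proof: both identify $\mathrm{ker}(f_\infty)=\mathrm{ker}(g_\infty)$ via Lemma \ref{lema:PhiInfty}, apply the asymptotic level stability of $g$ at the shifted levels $\tau^k:=t^k-\lambda\|Ax^k-b\|_0$ (your $s^k$), and control the $\ell_0$ term by the same support-splitting argument on $\Lambda=\mathrm{supp}(A\bar{x})$ used in Proposition \ref{prop:l0ALS}. The bookkeeping issue you flag is exactly the point the paper handles by defining $\tau^k$ at the outset, so no further changes are needed.
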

\begin{proof}
It is obvious that $f$ is  lower semicontinuous and proper since $g$ is lower semicontinuous and proper.
Let $\rho>0$ and $\{t^k: k\in\mathbb{N}\}$ be any bounded sequence of reals. Let  $\{x^k: k\in\mathbb{N}\}$ be any sequence   satisfying
\eqref{eq:alsCondition}.
Our task is  proving that there exists $k_0>0$ such that
\begin{equation}\label{eq:eq2..3}
f(x^k-\rho\bar{x})=g(x^k-\rho\bar{x})+\lambda\|A(x^k-\rho\bar{x})-b\|_0\le t^k
\end{equation}
 for any $k\ge k_0$.

By Lemma \ref{lema:PhiInfty}, $\mathrm{ker}(f_\infty)=\mathrm{ker}(g_\infty)$. Set $\tau^k:=t^k-\lambda\|Ax^k-b\|_0$. Then $\{\tau^k: k\in\mathbb{N}\}$ is bounded. Thus, \eqref{eq:alsCondition} implies
$$
x^k\in\mathrm{lev}(g, \tau^k), \|x^k\|_2\rightarrow +\infty, x^k\|x^k\|_2^{-1}\rightarrow \bar{x}\in\mathrm{ker}(g_\infty).
$$
Since $g$ is asymptotically level stable, we have that there exists $k_1>0$ such that
\begin{equation}\label{eq:0.0}
g(x^k-\rho\bar{x})\le \tau^k=t^k-\lambda\|Ax^k-b\|_0
\end{equation}
 for any $k\ge k_1$.

Finally, we dedicate to showing that there holds $k_0>0$ such that
\begin{equation}\label{eq:2.3}
\|A(x^k-\rho\bar{x})-b\|_0\le \|Ax^k-b\|_0
\end{equation}
 for any $k\ge k_0$.
Set $\Lambda:=\mathrm{supp} (A\bar{x})$. Obviously,  $(A(x^k-\rho\bar{x})-b)_i=(Ax^k-b)_i$ for all $i\in\Lambda^C$. For $i\in\Lambda$, $(Ax^k)_i\rightarrow \infty$ due to $\|x^k\|_2\rightarrow +\infty$ and $x^k\|x^k\|_2^{-1}\rightarrow \bar{x}$. Thus, there exists $k_0>k_1$ such that $(Ax^k-b)_i\neq 0$, therefore,  $\|(A(x^k-\rho\bar{x})-b)_i\|_0\le \|(Ax^k-b)_i\|_0=1$ for any $i\in\Lambda$ and any $k\ge k_0$. It follows that
$$
\begin{array}{rcl}
\|A(x^k-\rho\bar x)-b\|_0&=&\sum_{i\in\Lambda} \|(A(x^k-\rho\bar{x})-b)_i\|_0+\sum_{i\in\Lambda^C}\|(A(x^k-\rho\bar{x})-b)_i\|_0\\
&\le&\sum_{i\in\Lambda} \|(Ax^k-b)_i\|_0+\sum_{i\in\Lambda^C}\|(Ax^k-b)_i\|_0\\
&=& \|Ax^k-b\|_0
\end{array}
$$
holds for any $k\ge k_0$.

Then inequality \eqref{eq:2.3} and \eqref{eq:0.0} together imply \eqref{eq:eq2..3}. We then complete the proof.
\end{proof}

An interesting corollary based on the previous propositions is presented below.
\begin{corollary}\label{coro:als}
Let $T\in\mathbb{N}$, $A_i\in\mathbb{R}^{t_i\times n}$ and $b_i\in \mathbb{R}^{t_i}$, $i\in\mathbb{N}_{T+1}$.  Then the following statements hold:
\begin{itemize}
\item [(i)] $f_1:=\|A_1\cdot-b_1\|_q^q$ is asymptotically level stable for $q>0$.
\item [(ii)] $f_2:=\sum_{i=1}^T \|A_i\cdot-b_i\|_{q_i}^{q_i}$ is asymptotically level stable for $q_i>0$, $i\in\mathbb{N}_T$.
\item [(iii)] $f_3:=\sum_{i=1}^{T} \|A_i\cdot-b_i\|_{q_i}^{q_i}+\lambda\|A_{T+1} \cdot-b_{T+1}\|_0$ is asymptotically level stable for $q_i>0$, $i\in\mathbb{N}_T$, where $\lambda>0$.
\end{itemize}
\end{corollary}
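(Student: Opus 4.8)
The plan is to reduce each of the three statements to one of the propositions already proved, with essentially no new analysis. Part (i) is the special case $T=1$ of part (ii), so I would first treat part (ii). Put $t:=\sum_{i=1}^T t_i$, stack the data as $A:=[A_1^\top,\dots,A_T^\top]^\top\in\mathbb{R}^{t\times n}$ and $b:=(b_1,\dots,b_T)\in\mathbb{R}^t$, and write a generic $y\in\mathbb{R}^t$ in block form $y=(y^{(1)},\dots,y^{(T)})$ with $y^{(i)}\in\mathbb{R}^{t_i}$. Define $g:\mathbb{R}^t\to\mathbb{R}$ by $g(y):=\sum_{i=1}^T\|y^{(i)}-b_i\|_{q_i}^{q_i}$, so that $f_2=g\circ A$. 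The function $g$ is continuous, hence proper and lower semicontinuous, and it is level bounded: if $g(y^k)$ stays bounded along a sequence, then by nonnegativity of the blocks each $\|(y^k)^{(i)}-b_i\|_{q_i}^{q_i}$ stays bounded, which forces every coordinate of $(y^k)^{(i)}$ to stay bounded, hence $\{y^k\}$ is bounded. Proposition \ref{prop:leveBoun} then yields that $f_2=g\circ A$ is asymptotically level stable, and taking $T=1$ gives part (i).

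For part (iii), I would simply observe that $f_3=f_2+\lambda\|A_{T+1}\cdot-b_{T+1}\|_0$, use part (ii) to conclude that $f_2$ is asymptotically level stable, and then invoke Proposition \ref{prop:Asy+L0asl} with $g:=f_2$, $A:=A_{T+1}$ and $b:=b_{T+1}$ to deduce that $f_3$ is asymptotically level stable.

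The proof is therefore mostly a matter of packaging. The only point that requires any attention is the level boundedness of $g$ when some exponent $q_i<1$, where $\|\cdot\|_{q_i}$ is not a genuine norm; this is not a real obstacle, since the argument uses only that $t\mapsto|t|^{q_i}$ is coercive on the real line together with nonnegativity of the summands, and needs neither homogeneity nor the triangle inequality. No separate verification of properness or lower semicontinuity of $f_1,f_2,f_3$ is needed beyond noting that the relevant $g$ is continuous and that $\|A_{T+1}\cdot-b_{T+1}\|_0$ is lower semicontinuous, both immediate.
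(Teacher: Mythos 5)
Your proposal is correct and follows essentially the same route as the paper: stack the $A_i$ into one matrix, view $f_2$ as a level bounded $g$ composed with that matrix, apply Proposition \ref{prop:leveBoun}, and obtain (iii) from (ii) via Proposition \ref{prop:Asy+L0asl}. The only (harmless) differences are that you derive (i) as the $T=1$ case of (ii) rather than directly, and you spell out the level boundedness of $g$ for exponents $q_i<1$, which the paper treats as obvious.
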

\begin{proof}
Item (i) follows from Proposition \ref{prop:leveBoun} and the fact that $\|\cdot-b_1\|_q^q$ is level bounded  for $q>0$. In order to prove Item (ii), we set $A:=[A_1; A_2; \dots; A_T]$, $b:=[b_1;b_2;\dots;b_T]$
 and $g:\mathbb{R}^{\sum_{i=1}^T t_i}\rightarrow\mathbb{R}$ defined at $(y_1, y_2,\dots, y_T)\in\mathbb{R}^{t_1}\times\mathbb{R}^{t_2\times}\dots\times\mathbb{R}^{t_T}$
 as $g(y_1,y_2,\dots,y_T):=\sum_{i=1}^T \|y_i-b_i\|_{q_i}^{q_i}$. Obviously, $g$ is level bounded  and $f_2=g\circ A$. According to Proposition \ref{prop:leveBoun}, Item (ii) is obtained  immediately. Item (iii) is a direct result of Item (ii) and Proposition \ref{prop:Asy+L0asl}.
 \end{proof}

Based on these results, we exhibit several examples of asymptotically level stable functions in the following. For $C\subseteq\mathbb{R}^n$, we denote by $\iota_C$ the indicator function on $C$. That is for any $x\in\mathbb{R}^n$,
$$
\iota_C(x):=\begin{cases}
0,&\mathrm{~if~} x\in C,\\
+\infty, &\mathrm{~else}.
\end{cases}
$$

\begin{example}\label{exa:alsExa}
Let $b\in\mathbb{R}^t$ and $A$ a ${t\times n}$ matrix. Examples of asymptotically level stable data fitting functions $\phi$ emerging in applications are provided in the following:
\begin{itemize}
\item [(i)] $\ell_q$ functions  with $q>0$:
\begin{equation}\label{eq:5}
\phi(x):=\|Ax-b\|_q^q.
\end{equation}
\item [(ii)]  Indicator functions on a compact set:
    $$
    \phi(x):=\iota_C(Ax)
    $$
where $C:=\{y\in\mathbb{R}^t: \|y-b\|_q\le \epsilon\}$, $\epsilon\ge 0$, $q>0$.
\item [(iii)] $\ell_0$ function:
\begin{equation}\label{eq:7}
\phi(x):=\|Ax-b\|_0.
\end{equation}
\item [(iv)] $\ell_2+\ell_1$ function (\cite{Dong-Ji-Li-Shen:acha:12}):
\begin{equation}\label{eq:6}
\phi(x, u):= \|Ax-b+u\|_2^2+\lambda'\|u\|_1,
\end{equation}
where $\lambda'>0$ is a parameter.
\item [(v)] $\ell_2+\ell_0$ function (
    \cite{Yan:L0Restoration}):
\begin{equation}
\phi(x, u):=\|Ax-b+u\|_2^2+\lambda'\|u\|_0.
\end{equation}
\end{itemize}
\end{example}
The power of asymptotically level stable functions is captured by the following theorem (Corollary 3.4.2 of \cite{Teboulle:asympt2003}), which plays a crucial role in the existence of optimal solutions to an optimization problem.
\begin{theorem}(Corollary 3.4.2 of \cite{Teboulle:asympt2003})\label{thm:alsExist}
Let $f:\mathbb{R}^n\rightarrow \bar{\mathbb{R}}$ be asymptotically level stable and bounded below. Then $f$ has at least one  global minimizer.
\end{theorem}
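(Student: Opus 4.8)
The theorem is quoted from \cite{Teboulle:asympt2003}; the plan is to give a short self-contained argument by contradiction, built around nearest-point projections onto the level sets. Put $m:=\inf f$; properness and bounded-belowness give $m\in\mathbb{R}$. For $k\in\mathbb{N}$ let $C_k:=\mathrm{lev}(f,m+1/k)$, which is nonempty (definition of the infimum) and closed (lower semicontinuity), with $C_1\supseteq C_2\supseteq\cdots$. A Weierstrass argument on the closed set $C_k$ shows that $r_k:=\inf\{\|x\|_2:x\in C_k\}$ is attained at some $\hat x^k\in C_k$ with $\|\hat x^k\|_2=r_k$; since the $C_k$ are nested, $\{r_k\}$ is nondecreasing, hence either bounded or divergent to $+\infty$.

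If $\{r_k\}$ is bounded then $\{\hat x^k\}$ is bounded, and passing to a convergent subsequence $\hat x^{k_j}\to x^*$ and using lower semicontinuity yields $f(x^*)\le\liminf_j f(\hat x^{k_j})\le\liminf_j(m+1/k_j)=m$, so that $x^*$ minimizes $f$. It remains to exclude the case $r_k\to+\infty$. Then $\|\hat x^k\|_2\to+\infty$, and after relabelling a subsequence we may assume $\hat x^k\|\hat x^k\|_2^{-1}\to\bar x$ with $\|\bar x\|_2=1$. Feeding $t=\|\hat x^k\|_2$ and $x'=\hat x^k\|\hat x^k\|_2^{-1}$ into the analytic representation \eqref{eq:AsyFunAna} gives $f_\infty(\bar x)\le\liminf_k f(\hat x^k)/\|\hat x^k\|_2\le\liminf_k(m+1/k)/\|\hat x^k\|_2=0$, while $f\ge m$ forces $f(tx')/t\ge m/t\to0$ and hence $f_\infty(\bar x)\ge0$; thus $\bar x\in\mathrm{ker}(f_\infty)$.

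Now I would apply Definition \ref{def:als} with $\rho:=1$, the bounded real sequence $t^k:=m+1/k$, and $x^k:=\hat x^k$, which satisfies \eqref{eq:alsCondition}. Asymptotic level stability produces $k_0\in\mathbb{N}$ with $\hat x^k-\bar x\in\mathrm{lev}(f,m+1/k)=C_k$ for all $k\ge k_0$, so by minimality of $r_k$ we get $\|\hat x^k-\bar x\|_2\ge r_k=\|\hat x^k\|_2$, which rearranges to $\langle\hat x^k,\bar x\rangle\le\tfrac12\|\bar x\|_2^2=\tfrac12$. But $\langle\hat x^k,\bar x\rangle=\|\hat x^k\|_2\,\langle\hat x^k\|\hat x^k\|_2^{-1},\bar x\rangle\to+\infty$, since the inner product of the two unit vectors tends to $\|\bar x\|_2^2=1$ and $\|\hat x^k\|_2\to+\infty$. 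This contradiction rules out $r_k\to+\infty$, so we are in the bounded case and $f$ attains its minimum.

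The step I expect to be the actual insight is the nearest-point device: once one commits to tracking the minimal-norm point $\hat x^k$ of $C_k$ and using the \emph{minimality} of its norm, the asymptotic level stability hypothesis — which only guarantees that a far-out feasible point can be retracted by the \emph{fixed} amount $\rho\bar x$ without leaving the level set — is precisely what is needed to contradict that minimality; the remaining ingredients (existence of $\hat x^k$, the dichotomy on $\{r_k\}$, and checking $\bar x\in\mathrm{ker}(f_\infty)$, where one must keep track of signs to get $f_\infty\ge0$ from bounded-belowness) are routine.
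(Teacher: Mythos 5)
Your argument is correct: the nested level sets $C_k$, the minimal-norm selections $\hat x^k$, the verification that $\bar x\in\mathrm{ker}(f_\infty)$ via \eqref{eq:AsyFunAna} together with $f\ge m$, and the contradiction between $\langle\hat x^k,\bar x\rangle\le\tfrac12$ and $\langle\hat x^k,\bar x\rangle\to+\infty$ all check out. The paper itself gives no proof of this statement --- it is quoted as Corollary 3.4.2 of \cite{Teboulle:asympt2003} --- and your nearest-point argument is essentially the standard one from that reference, so there is nothing further to reconcile.
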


To end this section, we state an assumption on $\phi$ explicitly here for easy reference in the remainder of this paper.
\begin{hypothesis}\label{hypo:phi}
 The proper function $\phi:\mathbb{R}^n\rightarrow\bar{\mathbb{R}}$ is lower semicontinuous, asymptotically level stable and bounded below.
\end{hypothesis}
\section{Existence of optimal solutions to problems \eqref{model:l0} and \eqref{model:3}}
This section is devoted to  the existence of optimal solutions of problems \eqref{model:l0} and \eqref{model:3}. We obtain in this section that if  $\phi$ satisfies H\ref{hypo:phi}, then problems \eqref{model:l0} and  \eqref{model:3} have optimal solutions.

 By Theorem \ref{thm:alsExist}, if a function $f$ is asymptotically level stable and bounded below, then $f$ has a global minimizer. Thus, in order to show problems \eqref{model:l0} and  \eqref{model:3} have optimal solutions, it suffices to prove $\Phi$ and $\Psi_\gamma$ are asymptotically level stable and bounded below for any $\gamma>0$.

The next proposition reveals  when will $\Phi$ and $\Psi_\gamma$ are asymptotically level stable.

\begin{proposition}\label{prop:asl}
 Let $\phi:\mathbb{R}^n\rightarrow\bar{\mathbb{R}}$  satisfy H\ref{hypo:phi}. Let $\Phi$ and $\Psi_\gamma$ be defined by \eqref{model:l0} and \eqref{model:3} respectively. Then both $\Phi$ and $\Psi_\gamma$, for any $\gamma>0$, are asymptotically level stable and bounded below.
\end{proposition}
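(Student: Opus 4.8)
The plan is to verify both conclusions separately, disposing of boundedness below in each case by the trivial observation that the added penalty is nonnegative, and reducing asymptotic level stability to that of $\phi$. For $\Phi$ this is essentially already done: writing $\Phi=\phi+\lambda\|B\cdot-0\|_0$, one checks that $\Phi$ is proper and lower semicontinuous (properness and lower semicontinuity of $\phi$ together with lower semicontinuity of $\|B\cdot\|_0$), and since $\phi$ satisfies H\ref{hypo:phi} it qualifies as the function $g$ in Proposition~\ref{prop:Asy+L0asl}, which immediately gives that $\Phi$ is asymptotically level stable; boundedness below is clear because $\phi$ is bounded below and $\lambda\|Bx\|_0\ge 0$.

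The work is in $\Psi_\gamma$. The first point I would record is that $h:=\lambda\,\psi_\gamma(B\cdot)$ is a \emph{bounded} function on $\mathbb{R}^n$: from \eqref{def:varphi} we have $0\le\varphi_\gamma(t)\le 1$ for every $t$, hence $0\le\psi_\gamma(y)\le m$ for every $y\in\mathbb{R}^m$, so $0\le h(x)\le\lambda m$. Consequently $\Psi_\gamma=\phi+h$ is proper, lower semicontinuous (as $\psi_\gamma\circ B$ is continuous), and bounded below, and by Lemma~\ref{lema:PhiInfty} its asymptotic function satisfies $(\Psi_\gamma)_\infty=\phi_\infty$, so $\mathrm{ker}((\Psi_\gamma)_\infty)=\mathrm{ker}(\phi_\infty)$. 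This is exactly what lets the asymptotic level stability of $\phi$ be transferred, following the same scheme as the proof of Proposition~\ref{prop:Asy+L0asl}. Concretely, given $\rho>0$, a bounded real sequence $\{t^k:k\in\mathbb{N}\}$ and $\{x^k:k\in\mathbb{N}\}$ satisfying \eqref{eq:alsCondition} for $\Psi_\gamma$, I would set $\tau^k:=t^k-\lambda\,\psi_\gamma(Bx^k)$, which is again bounded, and note that $x^k\in\mathrm{lev}(\phi,\tau^k)$, $\|x^k\|_2\to+\infty$ and $x^k\|x^k\|_2^{-1}\to\bar{x}\in\mathrm{ker}(\phi_\infty)$. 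Since $\phi$ is asymptotically level stable, there is $k_1$ with $\phi(x^k-\rho\bar{x})\le\tau^k=t^k-\lambda\,\psi_\gamma(Bx^k)$ for all $k\ge k_1$, so it remains only to control the penalty term.

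The key estimate is $\psi_\gamma(B(x^k-\rho\bar{x}))\le\psi_\gamma(Bx^k)$ for all large $k$. Put $\Lambda:=\mathrm{supp}(B\bar{x})$. For $i\notin\Lambda$ the $i$-th coordinates of $B(x^k-\rho\bar{x})$ and $Bx^k$ agree, so the corresponding summands coincide. For $i\in\Lambda$ we have $(B\bar{x})_i\neq 0$; since $\|x^k\|_2\to+\infty$ and $x^k\|x^k\|_2^{-1}\to\bar{x}$, both $|(Bx^k)_i|$ and $|(Bx^k-\rho B\bar{x})_i|$ tend to $+\infty$, and because $\Lambda$ is finite there is $k_2$ beyond which all these coordinates exceed $\gamma^{-1/p}$ in absolute value, whence by \eqref{def:varphi} every such summand equals $1$ on both sides. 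Thus $\psi_\gamma(B(x^k-\rho\bar{x}))=\psi_\gamma(Bx^k)$ for $k\ge k_2$, and taking $k_0:=\max\{k_1,k_2\}$ yields $\Psi_\gamma(x^k-\rho\bar{x})=\phi(x^k-\rho\bar{x})+\lambda\,\psi_\gamma(B(x^k-\rho\bar{x}))\le t^k$ for $k\ge k_0$, which is the definition of asymptotic level stability. I do not expect an essential obstacle here; the only points needing care are invoking Lemma~\ref{lema:PhiInfty} (which relies precisely on $h$ being bounded, i.e. on the capping) so that the limiting direction $\bar{x}$ genuinely lies in $\mathrm{ker}(\phi_\infty)$, and using the finiteness of $\Lambda$ so that a single index $k_2$ works for all diverging coordinates at once.
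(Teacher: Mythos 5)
Your proposal is correct and follows essentially the same route as the paper's proof: $\Phi$ is handled by Proposition~\ref{prop:Asy+L0asl}, and for $\Psi_\gamma$ you use the boundedness of $\psi_\gamma$ together with Lemma~\ref{lema:PhiInfty} to place $\bar{x}$ in $\mathrm{ker}(\phi_\infty)$, shift the level via $\tau^k:=t^k-\lambda\psi_\gamma(Bx^k)$, and control the penalty term by splitting indices along $\Lambda=\mathrm{supp}(B\bar{x})$. The only cosmetic difference is that you prove equality of the capped summands on $\Lambda$ for large $k$, whereas the paper only needs the one-sided bound $\varphi_\gamma((B(x^k-\rho\bar{x}))_i)\le 1=\varphi_\gamma((Bx^k)_i)$; both are valid.
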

\begin{proof}
It is obvious that $\Phi$ and $\Psi_\gamma$ are   lower semicontinuous and proper since $\phi$ and $\psi_\gamma$ are lower semicontinuous, proper and $\mathrm{dom}(\psi_\gamma)=\mathbb{R}^m$. The lower boundedness of $\Phi$ and $\Psi_\gamma$  follows immediately from the lower boundedness of $\phi$, $\ell_0$ norm and $\psi_\gamma$.
According to Proposition \ref{prop:Asy+L0asl}, $\Phi$ is asymptotically level stable. We next try our best to prove $\Psi_\gamma$ is asymptotically level stable for any $\gamma>0$.

For any $\gamma>0$, $\lambda>0$ and $p>0$, let $\rho>0$ and $\{t^k: k\in\mathbb{N}\}$ be any bounded sequence of reals and  $\{x^k: k\in\mathbb{N}\}$ be any sequence   satisfying
\begin{equation}\label{eq:eq1}
x^k\in\mathrm{lev}(\Psi_\gamma, t^k), \|x^k\|_2\rightarrow +\infty, x^k\|x^k\|_2^{-1}\rightarrow \bar{x}\in\mathrm{ker}({\Psi_\gamma}_\infty).
\end{equation}
In order to get this proposition, we require to prove that there exists $k_0>0$ such that
\begin{equation}\label{eq:eq3}
\Psi_\gamma(x^k-\rho\bar{x})=\phi(x^k-\rho\bar{x})+\lambda\psi_\gamma(B(x^k-\rho\bar{x}))\le t^k
\end{equation}
holds for any $k\ge k_0$.

By Lemma \ref{lema:PhiInfty}, $\bar x\in\mathrm{ker}(\phi_\infty)$.  Set $\tau^k:=t^k-\lambda\psi_\gamma(Bx^k)$. Then $\{\tau^k: k\in\mathbb{N}\}$ is bounded because $\psi_\gamma$ is bounded.  Thus, \eqref{eq:eq1} implies
$$
x^k\in\mathrm{lev}(\phi, \tau^k), \|x^k\|_2\rightarrow +\infty, x^k\|x^k\|_2^{-1}\rightarrow \bar{x}\in\mathrm{ker}(\phi_\infty).
$$
Since $\phi$ is asymptotically level stable, we have that there exists $k_1>0$ such that
\begin{equation}\label{eq:0}
\phi(x^k-\rho\bar{x})\le \tau^k=t^k-\lambda\psi_\gamma(Bx^k)
\end{equation}
 for any $k\ge k_1$.

Finally, we proceed to proving there exists $k_0>0$ such that
\begin{equation}\label{eq:1}
\psi_\gamma(B(x^k-\rho\bar x))\le \psi_\gamma(Bx^k)
\end{equation}
holds for all $k\ge k_0$.
To this end, set $\Lambda:=\mathrm{supp}(B\bar x)$. Then $\varphi_\gamma((B(x^k-\rho\bar x))_i)=\varphi_\gamma((Bx^k)_i)$ for $i\in\Lambda^C$, where $\varphi_\gamma$ is defined by \eqref{def:varphi}.
We next focus on indexes in $\Lambda$. For $i\in\Lambda$, $\frac{(Bx^k)_i}{\|x^k\|_2}\rightarrow (B\bar x)_i$
due to $\frac{x^k}{\|x^k\|_2}\rightarrow \bar x$. Then $(Bx^k)_i\rightarrow\infty$ for all $i\in\Lambda$. Therefore, there exists $k_0>k_1$ such that $|(Bx^k)_i|\ge\frac{1}{\gamma^{1/p}}$ for all $i\in\Lambda$ and $k\ge k_0$. Immediately we have  $\varphi_\gamma((B(x^k-\rho\bar x))_i)\le\varphi_\gamma((Bx^k)_i)=1$ for all $i\in\Lambda$ and $k\ge k_0$.
Thus for any $k\ge k_0$,
\begin{eqnarray*}
\psi_\gamma(B(x^k-\rho\bar x))&=&\sum_{i=1}^m\varphi_\gamma((B(x^k-\rho\bar x))_i)\\
&=&\sum_{i\in\Lambda}\varphi_\gamma((B(x^k-\rho\bar x))_i)+\sum_{i\in\Lambda^c}\varphi_\gamma((B(x^k-\rho\bar x))_i)\\
&\le&\sum_{i\in\Lambda}\varphi_\gamma((Bx^k)_i)+\sum_{i\in\Lambda^c}\varphi_\gamma((Bx^k)_i)\\
&=&\psi_\gamma(Bx^k).
\end{eqnarray*}
Then, inequality \eqref{eq:0} and \eqref{eq:1}  imply \eqref{eq:eq3}. By definition \ref{def:als}, we get this proposition.
\end{proof}

Now, by applying  Proposition \ref{prop:asl} and Theorem \ref{thm:alsExist}, we establish the main result of this section in the next theorem.
\begin{theorem}\label{thm:SoulExist}
 Let $\phi$ satisfy H\ref{hypo:phi}. Then both the optimal solution sets to problems \eqref{model:l0} and \eqref{model:3} are not empty.
\end{theorem}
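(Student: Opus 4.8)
The plan is to obtain Theorem~\ref{thm:SoulExist} as an immediate consequence of Proposition~\ref{prop:asl} and Theorem~\ref{thm:alsExist}. First I would invoke Proposition~\ref{prop:asl}: since $\phi$ satisfies H\ref{hypo:phi}, the objective $\Phi$ of problem~\eqref{model:l0} and, for every fixed $\gamma>0$, the objective $\Psi_\gamma$ of problem~\eqref{model:3} are both asymptotically level stable and bounded below. These are precisely the two hypotheses demanded by Theorem~\ref{thm:alsExist} (Corollary 3.4.2 of \cite{Teboulle:asympt2003}).

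Next I would apply Theorem~\ref{thm:alsExist} separately to $\Phi$ and to $\Psi_\gamma$, concluding that each attains its infimum and hence possesses at least one global minimizer. Since throughout the paper ``optimal solution set'' denotes the set of all global minimizers, this shows that the optimal solution sets of problems~\eqref{model:l0} and~\eqref{model:3} are nonempty (the latter for each $\gamma>0$), which is exactly the assertion. In effect the theorem is a corollary, and the proof is just the two-line composition of these earlier results.

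The only point requiring any care at this level is to make sure the lower boundedness hypothesis of Theorem~\ref{thm:alsExist} is genuinely available for $\Psi_\gamma$; this is already recorded inside Proposition~\ref{prop:asl}, where lower boundedness of $\Psi_\gamma$ follows from the lower boundedness of $\phi$ together with the nonnegativity of the $\ell_0$ norm and of $\psi_\gamma$. No additional coercivity or compactness argument is needed. I therefore do not anticipate any obstacle in this final step: all of the substantive work has been carried out beforehand, namely in Proposition~\ref{prop:asl} (via Lemma~\ref{lema:PhiInfty}, which identifies $\ker(({\Psi_\gamma})_\infty)=\ker(\phi_\infty)$, and the observation that shifting by $-\rho\bar x$ with $\bar x\in\ker(B)$ does not increase the term $\psi_\gamma(B\cdot)$ once $\|x^k\|_2$ is large) and, upstream of that, in the structural results of Section~\ref{sec:als} guaranteeing that the $\phi$ arising in practice indeed satisfy H\ref{hypo:phi}.
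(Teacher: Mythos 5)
Your proposal is correct and coincides with the paper's own argument: the paper likewise presents Theorem~\ref{thm:SoulExist} as an immediate consequence of Proposition~\ref{prop:asl} combined with Theorem~\ref{thm:alsExist}, with all the substantive work already done in establishing that $\Phi$ and $\Psi_\gamma$ are asymptotically level stable and bounded below. No gaps.
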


\section{Asymptotic approximation to problem \eqref{model:l0}}

In this section, we aim at showing that problem \eqref{model:3} provides asymptotic approximation for problem \eqref{model:l0} when $\phi$ is a level bounded function composed with a linear mapping.

We begin with an important lemma. We denote by ${\bf{0}}_{r\times t}$ (resp., $\mathbf{0}_n$) the $r\times t$ matrix (resp., $n$-dimensional vector) with all entries  $0$.
For an $r\times t$ matrix $A\neq \mathbf{0}_{r\times t}$, let  $\sigma_{min}(A)$ be the minimal nonzero singular value of $A$. For $\Lambda\subseteq \mathbb{N}_r$, let $A_\Lambda$ be the matrix formed by the rows of $A$ with indexes in $\Lambda$.  Similarly, for any $x\in\mathbb{R}^n$ and $\Lambda\subseteq \mathbb{N}_n$, we denote by $x_\Lambda$ the vector formed by the components of $x$ with indexes in $\Lambda$. For a set $S$, denote by $|S|$ the number of components of $S$ and let  $2^S$ collects all the nonempty subsets of $S$.  Then we define $\sigma:\mathbb{R}^{r\times t}\backslash\{\mathbf{0}_{r\times t}\}\rightarrow (0,+\infty)$ at $A\in\mathbb{R}^{r\times t}\backslash \{\mathbf{0}_{r\times t}\}$ as
\begin{equation}\label{def:sigma}
\sigma(A):=\min\{\sigma_{min}(A_\Lambda):{\Lambda\in 2^{\mathbb{N}_r}}, A_\Lambda\neq {\bf 0}_{|\Lambda|\times t}\}.
\end{equation}
\begin{lemma}\label{lema:3.2}
Let $\gamma_0>0$, $t\in\mathbb{N}_n$, $u\in\mathbb{R}^n$, $B\in\mathbb{R}^{m\times n}$ and $p>0$. Let $\psi_\gamma$ and $\sigma$ be defined by \eqref{def:psi} and \eqref{def:sigma} respectively. Let $\{\xi_i\in\mathbb{R}^n: i\in\mathbb{N}_t\}$ be an orthonormal  basis of a subspace $\mathbb{S}\subseteq \mathbb{R}^n$ and $\Xi$ be the matrix whose $i$-th column is $\xi_i$.  Suppose $B\Xi\neq \mathbf{0}_{m\times t}$. Then, for  any $w\in\mathbb{S}$, there exist  $w'\in\mathbb{S}$ satisfying
\begin{equation}\label{eq:10}
\|w'\|_2\le \frac{\sqrt{m}}{\sigma(B\Xi)}(\|Bu\|_\infty+\frac{1}{\gamma_0^{1/p}})
\end{equation}
such that
\begin{equation}\label{eq:11}
\psi_\gamma(B(u+w'))\le \psi_\gamma(B(u+w))
\end{equation}
 for any $\gamma\ge \gamma_0$.
\end{lemma}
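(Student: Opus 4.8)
The plan is to reduce the statement to a statement about the ``active'' coordinates of $B$ on the subspace $\mathbb{S}$, and then invoke a least-norm solve restricted to that coordinate set. Fix $w\in\mathbb{S}$ and write $y:=B(u+w)$. Partition the row indices $\mathbb{N}_m$ into $\Lambda:=\{i\in\mathbb{N}_m:\ \varphi_\gamma(y_i)<1\}$ and its complement; more precisely, since we want a statement uniform in $\gamma\ge\gamma_0$, I would instead work with the set of indices where the capped value is \emph{not} already maximal at level $\gamma_0$, i.e. $\Lambda:=\{i\in\mathbb{N}_m:\ |(B(u+w))_i|<\gamma_0^{-1/p}\}$ (or a convenient superset thereof). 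On $\Lambda^C$ we have $\varphi_\gamma((B(u+w))_i)=1$ for all $\gamma\ge\gamma_0$, so those terms already contribute the maximum possible value $1$ to $\psi_\gamma$ and cannot be made worse by any replacement.

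Next I would look for $w'\in\mathbb{S}$ that zeroes out the $\Lambda$-rows of $B(u+w')$, i.e. $(B\Xi)_\Lambda\,c = -(Bu)_\Lambda$ where $w'=\Xi c$, provided this is solvable; if $(B\Xi)_\Lambda=\mathbf{0}_{|\Lambda|\times t}$ one simply takes $w'=\mathbf{0}_n$ (then $(B(u+w'))_i=(Bu)_i$ on $\Lambda$, and one checks $|(Bu)_i|<\gamma_0^{-1/p}$ is compatible, so $\varphi_\gamma$ is small there too — in fact one wants the bound to hold regardless). When $(B\Xi)_\Lambda\neq\mathbf{0}$, pick $c$ to be the minimum-norm least-squares solution of $(B\Xi)_\Lambda c=-(Bu)_\Lambda$; since $-(Bu)_\Lambda$ need not lie in the range, take instead the minimum-norm $c$ achieving the projection, which still makes $(B\Xi)_\Lambda c$ close to $-(Bu)_\Lambda$. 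The cleaner route is: choose $c$ of minimal norm with $(B\Xi)_\Lambda c = -\,P(Bu)_\Lambda$ where $P$ is the orthogonal projector onto $\mathrm{range}((B\Xi)_\Lambda)$; then $\|c\|_2 \le \|(Bu)_\Lambda\|_2/\sigma_{min}((B\Xi)_\Lambda) \le \sqrt{m}\,\|Bu\|_\infty/\sigma(B\Xi)$ by definition \eqref{def:sigma}, and since $\Xi$ has orthonormal columns, $\|w'\|_2=\|c\|_2$. This already gives a bound of the stated form; the extra $\gamma_0^{-1/p}$ slack in \eqref{eq:10} is there precisely to absorb the residual $(B(u+w'))_i = ((I-P)(Bu)_\Lambda)_i$, whose entries we must show have modulus $<\gamma_0^{-1/p}$, hence $\varphi_\gamma$ of them is still $\le 1 = \varphi_\gamma((B(u+w))_i)$ is not automatic — so one argues coordinatewise that on $\Lambda$ the new value is no larger than the old. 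Actually the simplest clean argument: take $w'$ so that $(B(u+w'))_\Lambda = \mathbf{0}$ exactly when possible, and otherwise bound each residual component by $\gamma_0^{-1/p}$, which forces $\varphi_\gamma((B(u+w'))_i)\le \varphi_\gamma((B(u+w))_i)$ term by term on $\Lambda$; combined with equality on $\Lambda^C$ we get \eqref{eq:11}.

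Assembling: for $\gamma\ge\gamma_0$,
\begin{align*}
\psi_\gamma(B(u+w')) &= \sum_{i\in\Lambda}\varphi_\gamma((B(u+w'))_i) + \sum_{i\in\Lambda^C}\varphi_\gamma((B(u+w'))_i)\\
&\le \sum_{i\in\Lambda}\varphi_\gamma((B(u+w))_i) + \sum_{i\in\Lambda^C} 1\\
&= \psi_\gamma(B(u+w)),
\end{align*}
where on $\Lambda^C$ we used $\varphi_\gamma((B(u+w))_i)=1$ for all such $\gamma$, and on $\Lambda$ the coordinatewise comparison just described. The norm bound \eqref{eq:10} comes from the minimum-norm solve together with $\|(Bu)_\Lambda\|_2\le\sqrt{|\Lambda|}\,\|Bu\|_\infty\le\sqrt{m}\,\|Bu\|_\infty$ and $\sigma_{min}((B\Xi)_\Lambda)\ge\sigma(B\Xi)$.

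The main obstacle is the uniformity in $\gamma\ge\gamma_0$: the ``active set'' $\Lambda$ on which $\varphi_\gamma$ has not yet saturated genuinely shrinks as $\gamma$ grows, so $\Lambda$ must be defined via the threshold $\gamma_0^{-1/p}$ (the largest relevant threshold) rather than via $\gamma$, and then one must check that the chosen $w'$ does not accidentally \emph{increase} any coordinate from inside the $\gamma_0$-saturated zone — which is exactly why the construction drives the $\Lambda$-coordinates of $B(u+w')$ toward $0$ (never larger in modulus than the $\gamma_0^{-1/p}$ budget), and why the slack term $\gamma_0^{-1/p}$ appears in \eqref{eq:10}. A secondary technical point is handling the degenerate sub-cases ($(B\Xi)_\Lambda=\mathbf{0}$, or $\Lambda=\emptyset$), both of which are immediate with $w'=\mathbf{0}_n$.
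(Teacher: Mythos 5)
There is a genuine gap in the construction of $w'$ on the non-saturated index set. For the inequality \eqref{eq:11} to hold for \emph{every} $\gamma\ge\gamma_0$, you need, on each index $i$ where the old value $\varphi_\gamma((B(u+w))_i)$ is not already saturated at $1$, the componentwise domination $|(B(u+w'))_i|\le|(B(u+w))_i|$ (since $\varphi_\gamma$ is strictly increasing in $|t|$ below the cap, merely keeping the new component below the threshold $\gamma_0^{-1/p}$ does not suffice: if $|(B(u+w))_i|=0.01\,\gamma_0^{-1/p}$ and $|(B(u+w'))_i|=0.5\,\gamma_0^{-1/p}$, then $\varphi_{\gamma_0}$ strictly increases at that coordinate). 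Your minimum-norm least-squares solve of $(B\Xi)_\Lambda c=-P(Bu)_\Lambda$ only controls the $\ell_2$ norm of the residual $(I-P)(Bu)_\Lambda$, not its individual components relative to those of $(B(u+w))_\Lambda$; a single $\Lambda$-coordinate of $B(u+w')$ can easily exceed the corresponding coordinate of $B(u+w)$ in modulus (e.g.\ $(B\Xi)_\Lambda=(1,1)^\top$, $(Bu)_\Lambda=(1,-1)^\top$, $y=-1$ gives old residual $(0,-2)^\top$ and least-squares residual $(1,-1)^\top$, which is worse at the first coordinate). You flag this yourself (``is not automatic'') but never close it, and your fallback claim that a $\gamma_0^{-1/p}$ bound on each residual component ``forces'' the term-by-term inequality is false for the reason above.

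The paper avoids this entirely by \emph{not} trying to shrink the unsaturated components: it sets $\Lambda:=\{i:|(B\Xi y)_i|\le\|Bu\|_\infty+\gamma_0^{-1/p}\}$ where $w=\Xi y$ (so that on $\Lambda^C$ the old values are saturated at $1$), and then takes $w'=\Xi y_{\mathbb{Y}}$ where $y_{\mathbb{Y}}$ is the orthogonal projection of $y$ onto the row space $\mathbb{Y}$ of $(B\Xi)_\Lambda$. This leaves $(B(u+w'))_i=(B(u+w))_i$ \emph{exactly} for every $i\in\Lambda$ (so the $\varphi_\gamma$ values there are unchanged for all $\gamma$), while on $\Lambda^C$ any change is harmless since the old contribution was already maximal. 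The norm bound then follows from $\|(B\Xi)_\Lambda y_{\mathbb{Y}}\|_2=\|(B\Xi)_\Lambda y\|_2\le\sqrt{m}(\|Bu\|_\infty+\gamma_0^{-1/p})$ (by the definition of $\Lambda$) together with $y_{\mathbb{Y}}\in\ker((B\Xi)_\Lambda)^\perp$, which is where $\sigma_{\min}((B\Xi)_\Lambda)\ge\sigma(B\Xi)$ enters. To repair your argument you would essentially have to replace the ``zero out the active rows'' step by this ``preserve the active rows while projecting out the kernel component of $y$'' step.
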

\begin{proof}
Let  $w\in\mathbb{S}$ and set $\tau_0:=\frac{1}{\gamma_0^{1/p}}$. Then,
there exists $y\in\mathbb{R}^t$ such that $w=\Xi y$. Set $\Lambda:=\{i: |(B\Xi y)_i|\le \|Bu\|_\infty+\tau_0\}$. Thus, for any $i\in \Lambda^C$,  $|(Bu+B\Xi y)_i|\ge |(B\Xi y)_i|-|(Bu)_i|> \tau_0$. Then,  $\varphi_\gamma((Bu+Bw)_i)=1$  for any $\gamma\ge \gamma_0$ and any $i\in\Lambda^C$. Clearly, if $\Lambda=\emptyset$, $\psi_\gamma(B(u+w))=m$ is the maximal value of $\psi_\gamma$. In such a case, by
setting $w'=\mathbf{0}_n$, \eqref{eq:10} and \eqref{eq:11} hold. Next, we assume $\Lambda\neq \emptyset$.

Let $\eta_i\in\mathbb{R}^t$ be the $i$-th row of $(B\Xi)_{\Lambda}$ and $\mathbb{Y}:=\mathrm{span}\{\eta_i: i\in\mathbb{N}_{|\Lambda|}\}$. Let $y=y_\mathbb{Y}+y_{\mathbb{Y}^\perp}$ with $y_\mathbb{Y}\in\mathbb{Y}$ and $y_{\mathbb{Y}^\perp}\in\mathbb{Y}^\perp$. Then $\langle \eta_{i}, y_\mathbb{Y}\rangle=\langle \eta_{i},  y\rangle$. Thus, for $i\in\Lambda$ and $\gamma\ge \gamma_0$, there holds
\begin{equation}\label{eq:2.1}
\varphi_\gamma ((Bu+B\Xi y_\mathbb{Y})_i)=\varphi_\gamma ((Bu+B\Xi y)_i)=\varphi_\gamma ((Bu+Bw)_i).
 \end{equation}
For $i\in\Lambda^C$ and $\gamma\ge \gamma_0$,  we have
\begin{equation}\label{eq:2.2}
\varphi_\gamma ((Bu+B\Xi y_\mathbb{Y})_i)\le 1=\varphi_\gamma ((Bu+Bw)_i).
 \end{equation}
Set $w'=\Xi y_\mathbb{Y}$. Then \eqref{eq:2.1} and \eqref{eq:2.2} imply
\begin{eqnarray*}
\psi_\gamma(B(u+w'))&=&\psi_\gamma(Bu+B\Xi y_\mathbb{Y})\\
&=&\sum_{i\in\Lambda}\varphi_\gamma ((Bu+B\Xi y_\mathbb{Y})_i)+\sum_{i\in\Lambda^C}\varphi_\gamma ((Bu+B\Xi y_\mathbb{Y})_i)\\
&\le&\sum_{i\in\Lambda}\varphi_\gamma ((Bu+Bw)_i)+\sum_{i\in\Lambda^C}\varphi_\gamma ((Bu+Bw)_i)\\
&=&\psi_\gamma(B(u+w))
\end{eqnarray*}
holds for any $\gamma\ge \gamma_0$. Inequality \eqref{eq:11} follows immediately.

We next prove \eqref{eq:10}. We first show a trivial case when $(B\Xi)_{|\Lambda|}={\bf{0}}_{|\Lambda|\times t}$. In this case, $y_{\mathbb{Y}}={\bf{0}}_{t}$, therefore, $w'={\bf{0}}_n$. Then \eqref{eq:10} holds obviously. When $(B\Xi)_{|\Lambda|}\neq{\bf{0}}_{|\Lambda|\times t}$,  by the definition of $\Lambda$ and $\mathbb{Y}$,  $\|(B\Xi)_\Lambda y_\mathbb{Y}\|_2=\|(B\Xi)_\Lambda y\|_2\le \sqrt{m}(\|Bu\|_\infty+\tau_0)$.  Since $y_\mathbb{Y}\in \mathrm{range}((B\Xi)_\Lambda^\top)=\mathrm{ker}((B\Xi)_\Lambda)^\perp$, we have $$
\|w'\|_2=\|\Xi y_\mathbb{Y}\|_2=\|y_\mathbb{Y}\|_2\le \frac{\sqrt{m}}{\sigma_{min}((B\Xi)_\Lambda)}(\|Bu\|_\infty+\tau_0).$$
Finally, by the definition of $\sigma$, that is \eqref{def:sigma}, we get \eqref{eq:10}.
We complete the proof.
\end{proof}

We next establish a crucial lemma by taking advantage of Lemma \ref{lema:3.2}.
\begin{lemma}\label{lema:3.3}
Let $\gamma_0>0$,  $A\in\mathbb{R}^{r\times n}$ and $B\in\mathbb{R}^{m\times n}$.  Let $\Phi$ and $\Psi_\gamma$ be defined by \eqref{model:l0} and \eqref{model:3} respectively.  Let $\alpha\ge \inf\Phi$. Suppose the dimension of $\mathrm{ker}(A)$ is $t$. Let $\{\xi_i: i\in\mathbb{N}_t\}$ be an orthonormal basis of $\mathrm{ker}(A)$. Set $\Xi$ be the matrix whose $i$-th column is $\xi_i$. If $\phi:=f\circ A$ with $f:\mathbb{R}^r\rightarrow\bar{\mathbb{R}}$ proper, lower semicontinuous,  level bounded and bounded below,  then the following statements hold:
\begin{itemize}
\item [(i)] $S_1:=\{u\in\mathrm{ker}(A)^\perp: f(Au)\le \alpha\}$ is nonempty, closed and bounded. Consequently, $U:=\sup\{\|Bu\|_\infty: u\in S_1\}$ satisfies $0\le U< +\infty$.
\item [(ii)] Set $S_2:=\{w\in\mathrm{ker}(A): \|w\|_2\le \frac{\sqrt{m}}{\sigma(B\Xi)}(U+\frac{1}{\gamma_0^{1/p}})\}$ if $B\Xi\neq \mathbf{0}_{m\times t}$ and $S_2:=\{\mathbf{0}_n\}$ otherwise, where $\sigma$ is defined by \eqref{def:sigma}. Then for any $x\in\mathrm{lev}(\Psi_\gamma, \alpha)$, there exists $x'\in S_1+S_2$ such that $\Psi_\gamma(x')\le \Psi_\gamma(x)$ for any $\gamma\ge \gamma_0$.
\item [(iii)] $S_1+S_2$ is compact.
\end{itemize}
\end{lemma}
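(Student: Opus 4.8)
The plan is to prove the three items in order, since each feeds the next. For item (i), the set $S_1$ is nonempty because $\alpha \ge \inf\Phi$ guarantees some $x$ with $\phi(x) = f(Ax) \le \alpha$; projecting $x$ onto $\mathrm{ker}(A)^\perp$ gives a point of $S_1$ (as $Ax$ only depends on the $\mathrm{ker}(A)^\perp$-component). Closedness follows from lower semicontinuity of $f$ and continuity of $A$. Boundedness is where the level boundedness of $f$ enters: if $\{u^j\} \subseteq S_1$ had $\|u^j\|_2 \to \infty$, then since $u^j \in \mathrm{ker}(A)^\perp$ the images $Au^j$ cannot stay bounded (the restriction of $A$ to $\mathrm{ker}(A)^\perp$ is injective, hence bounded below in norm on that subspace), contradicting $f(Au^j) \le \alpha$ together with level boundedness of $f$. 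Once $S_1$ is bounded, $U = \sup\{\|Bu\|_\infty : u \in S_1\}$ is finite, and it is nonnegative trivially (and $S_1 \neq \emptyset$ makes the sup well-defined).

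For item (ii), I would take any $x \in \mathrm{lev}(\Psi_\gamma, \alpha)$ and write $x = u + w$ with $u \in \mathrm{ker}(A)^\perp$ and $w \in \mathrm{ker}(A)$. First note $\phi(x) = f(Au) \le \Psi_\gamma(x) \le \alpha$ (using $\psi_\gamma \ge 0$ and the lower bound on $f$ — actually we only need $\psi_\gamma \ge 0$ and $\phi(x) = f(Ax) = f(Au)$), so $u \in S_1$ and hence $\|Bu\|_\infty \le U$. Now apply Lemma \ref{lema:3.2} with this $u$, with $\mathbb{S} = \mathrm{ker}(A)$, with the given orthonormal basis $\{\xi_i\}$ and matrix $\Xi$: in the nontrivial case $B\Xi \neq \mathbf{0}_{m\times t}$, for the vector $w$ there is $w' \in \mathrm{ker}(A)$ with $\|w'\|_2 \le \frac{\sqrt{m}}{\sigma(B\Xi)}(\|Bu\|_\infty + \frac{1}{\gamma_0^{1/p}}) \le \frac{\sqrt{m}}{\sigma(B\Xi)}(U + \frac{1}{\gamma_0^{1/p}})$, so $w' \in S_2$, and $\psi_\gamma(B(u+w')) \le \psi_\gamma(B(u+w))$ for all $\gamma \ge \gamma_0$. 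Setting $x' = u + w' \in S_1 + S_2$ gives $\Psi_\gamma(x') = \phi(u+w') + \lambda\psi_\gamma(B(u+w')) = f(Au) + \lambda\psi_\gamma(B(u+w')) \le f(Au) + \lambda\psi_\gamma(B(u+w)) = \Psi_\gamma(x)$, where the key point is that $\phi(u+w') = f(A(u+w')) = f(Au) = \phi(x)$ because $w' \in \mathrm{ker}(A)$. In the trivial case $B\Xi = \mathbf{0}_{m\times t}$ we have $Bw = Bw' = 0$ for any $w, w' \in \mathrm{ker}(A)$, so taking $x' = u \in S_1$ (with $S_2 = \{\mathbf{0}_n\}$) works directly since $\psi_\gamma(Bu) = \psi_\gamma(Bx)$.

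Item (iii) is the easy wrap-up: $S_1$ is compact by item (i), $S_2$ is compact (a closed ball in the subspace $\mathrm{ker}(A)$, or a singleton), and the Minkowski sum of two compact sets is compact. The main obstacle — the only place real work is needed — is the boundedness claim in item (i), i.e. correctly exploiting that the composition $f \circ A$ is level bounded \emph{when restricted to} $\mathrm{ker}(A)^\perp$ even though $\phi = f\circ A$ is generally not level bounded on all of $\mathbb{R}^n$; the inequality $\|Au\|_2 \ge \sigma_{\min}(A)\|u\|_2$ for $u \in \mathrm{ker}(A)^\perp$ is what makes this go through. Everything else is bookkeeping with the orthogonal decomposition and a direct invocation of Lemma \ref{lema:3.2}.
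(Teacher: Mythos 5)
Your proposal is correct and follows essentially the same route as the paper: decompose $x=u+w$ along $\mathrm{ker}(A)^\perp\oplus\mathrm{ker}(A)$, use level boundedness of $f$ together with injectivity of $A$ on $\mathrm{ker}(A)^\perp$ to bound $S_1$, invoke Lemma \ref{lema:3.2} with $\mathbb{S}=\mathrm{ker}(A)$ to produce $w'\in S_2$, and exploit $f(A(u+w'))=f(Au)$ to conclude $\Psi_\gamma(x')\le\Psi_\gamma(x)$. The only cosmetic difference is that you make the closedness of $S_1$ and the inequality $\|Au\|_2\ge\sigma_{\min}(A)\|u\|_2$ explicit, which the paper leaves implicit.
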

\begin{proof}
We first prove Item (i). From Proposition \ref{prop:leveBoun}, Theorem \ref{thm:SoulExist}, both problems \eqref{model:l0} and \eqref{model:3} have optimal solutions. Since $\alpha\ge \inf\Phi$ and $\|Bx\|_0\ge 0$ for any $x\in\mathbb{R}^n$, $\mathrm{lev}(f\circ A, \alpha)$ is nonempty. Thus $S_1$ is nonempty because any $x\in\mathrm{lev}(f\circ A, \alpha)$ can be decomposed as $x=u+w$ with $u\in\mathrm{ker}(A)^\perp$ and $w\in\mathrm{ker}(A)$. Since $f$ is level bounded, $\{Au: u\in S_1\}$ is bounded. Then $S_1$ is bounded due to the fact that $u\in\mathrm{ker}(A)^\perp$ for any $u\in S_1$. Therefore, $0\le U< +\infty$.

We next prove Item (ii). Set $\tau_0:=\frac{1}{\gamma_0^{1/p}}$. Since  $\Psi_\gamma\le \Phi$ for any $\gamma>0$,  $\mathrm{lev}(\Psi_\gamma, \alpha)$ is nonempty for any $\gamma>0$. Further, $\mathrm{lev}(\Psi_\gamma, \alpha)\subseteq \mathrm{lev}(f\circ A, \alpha)$ due to $\psi_\gamma\ge 0$.  Let $\gamma\ge\gamma_0$ and $x\in\mathrm{lev}(\Psi_\gamma, \alpha)$. Then $x$ can be uniquely decomposed as $x=u+w$, where $u\in\mathrm{ker}(A)^\perp$ and $w\in\mathrm{ker}(A)$. It is obvious that $u\in S_1$.
We first consider  the trivial case that $B\Xi=\mathbf{0}_{m\times t}$, which means that $\mathrm{ker}(A)\subseteq\mathrm{ker}(B)$. Then by setting  $x'=u$ one obtains $\Psi_\gamma(x')=\Psi_\gamma(x)$. Item (ii) follows immediately. We next discuss the case where $B\Xi\neq \mathbf{0}_{m\times t}$.
According to Lemma \ref{lema:3.2}, there exists $w'\in\mathrm{ker}(A)$ with $\|w'\|_2\le \frac{\sqrt{m}}{\sigma(B\Xi)}(\|Bu\|_\infty+\tau_0)$ such that $\psi_\gamma(B(u+w'))\le \psi_\gamma(B(u+w))=\psi_\gamma(Bx)$ for any $\gamma\ge \gamma_0$.
Set $x':=u+w'$. Obviously, $x'\in S_1+S_2$. Then for any $\gamma\ge \gamma_0$ we obtain
 \begin{eqnarray*}
\Psi_\gamma(x')&=&\phi(x')+\lambda\psi_\gamma(Bx')\\
&=&f(Au)+\lambda\psi_\gamma(B(u+w'))\\
&\le &f(A(u+w))+\lambda\psi_\gamma(B(u+w))\\
&=& \phi(x)+\lambda\psi_\gamma(Bx)\\
&=&\Psi_\gamma(x).
\end{eqnarray*}
Then we get Item (ii).

Item (iii) follows immediately from the fact that both $S_1$ and $S_2$ are nonempty, closed and bounded.
\end{proof}

With the help of Lemma \ref{lema:3.3}, we obtain the next proposition.
\begin{proposition}\label{prop:minCompact}
Let $\gamma_0>0$, $A\in\mathbb{R}^{r\times n}$, $B\in\mathbb{R}^{m\times n}$. Let $\Psi_\gamma$ be defined by \eqref{model:3}. If $\phi:=f\circ A$ with $f:\mathbb{R}^r\rightarrow \bar{\mathbb{R}}$ being proper, lower semicontinuous,  level bounded and bounded below, then there exists a compact set $S\subset\mathbb{R}^n$ such that for all $\gamma\ge \gamma_0$ there holds
$$
\min\{\Psi_\gamma(x): x\in\mathbb{R}^n\}=\min\{\Psi_\gamma(x): x\in S\}.
$$
\end{proposition}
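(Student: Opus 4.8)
The plan is to obtain Proposition~\ref{prop:minCompact} as a short consequence of Lemma~\ref{lema:3.3}, taking for $S$ the set $S_1+S_2$ constructed there and exploiting that this set does not depend on $\gamma$. First I would record the setup: by Proposition~\ref{prop:leveBoun} the function $\phi=f\circ A$ is lower semicontinuous, asymptotically level stable and bounded below, so it satisfies H\ref{hypo:phi}; hence by Theorem~\ref{thm:SoulExist} both problems \eqref{model:l0} and \eqref{model:3} have optimal solutions, and in particular $\alpha:=\min\Phi$ is a finite real with $\alpha\ge\inf\Phi$. I would then apply Lemma~\ref{lema:3.3} with this $\alpha$: it produces $S_1$ and $S_2$ and, upon setting $S:=S_1+S_2$, asserts that $S$ is compact (Item (iii)) and that for every $\gamma\ge\gamma_0$ and every $x\in\mathrm{lev}(\Psi_\gamma,\alpha)$ there is some $x'\in S$ with $\Psi_\gamma(x')\le\Psi_\gamma(x)$ (Item (ii)). The point to stress is that $S_1$ depends only on $f,A,\alpha$ and $S_2$ only on $B$, $\mathrm{ker}(A)$, $m$, $p$ and $\gamma_0$, so the same $S$ serves for all $\gamma\ge\gamma_0$.

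Next, I would fix $\gamma\ge\gamma_0$ and run a two-sided comparison. Let $x^\star$ be a global minimizer of $\Psi_\gamma$ over $\mathbb{R}^n$, which exists by Theorem~\ref{thm:SoulExist}. Since $\varphi_\gamma(t)=\min(\gamma|t|^p,1)\le\mathbbm{1}[t\neq 0]$ for every scalar $t$, we have $\psi_\gamma\le\|\cdot\|_0$ pointwise, hence $\Psi_\gamma\le\Phi$ pointwise; therefore $\Psi_\gamma(x^\star)=\min_{\mathbb{R}^n}\Psi_\gamma\le\min\Phi=\alpha$, i.e. $x^\star\in\mathrm{lev}(\Psi_\gamma,\alpha)$. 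Applying Lemma~\ref{lema:3.3}(ii) to $x^\star$ yields $x'\in S$ with $\Psi_\gamma(x')\le\Psi_\gamma(x^\star)=\min_{\mathbb{R}^n}\Psi_\gamma$. On the other hand, $\Psi_\gamma$ is lower semicontinuous (Proposition~\ref{prop:asl}) and $S$ is compact, so $\Psi_\gamma$ attains its minimum over $S$, and trivially $\min_{x\in S}\Psi_\gamma(x)\ge\min_{x\in\mathbb{R}^n}\Psi_\gamma(x)$ because $S\subseteq\mathbb{R}^n$. Chaining these, $\min_{x\in S}\Psi_\gamma(x)\le\Psi_\gamma(x')\le\min_{x\in\mathbb{R}^n}\Psi_\gamma(x)\le\min_{x\in S}\Psi_\gamma(x)$, so all quantities coincide. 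As $\gamma\ge\gamma_0$ was arbitrary, this proves the stated identity.

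I do not expect a genuine obstacle: essentially all of the work has already been front-loaded into Lemma~\ref{lema:3.3}, whose Item (ii) is precisely the uniform-in-$\gamma$ reduction of the sublevel set $\mathrm{lev}(\Psi_\gamma,\alpha)$ into the fixed compact $S$, and whose Item (iii) supplies compactness. The only points that need a little care are: choosing the level to be $\alpha=\min\Phi$ so that every minimizer of $\Psi_\gamma$ automatically lies in $\mathrm{lev}(\Psi_\gamma,\alpha)$ (this is where $\Psi_\gamma\le\Phi$ enters); verifying that the set $S=S_1+S_2$ furnished by Lemma~\ref{lema:3.3} is independent of $\gamma$; and invoking lower semicontinuity of $\Psi_\gamma$ together with compactness of $S$ to ensure the minimum over $S$ is attained, which is what lets the two-sided inequality close into an equality.
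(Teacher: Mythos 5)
Your proposal is correct and follows essentially the same route as the paper: fix a level $\alpha\ge\inf\Phi$, observe that every global minimizer of $\Psi_\gamma$ lies in $\mathrm{lev}(\Psi_\gamma,\alpha)$ since $\Psi_\gamma\le\Phi$, and then invoke Items (ii) and (iii) of Lemma~\ref{lema:3.3} to replace that sublevel set by the fixed compact set $S=S_1+S_2$. You merely spell out more explicitly than the paper the attainment of the minimum over $S$ and the independence of $S$ from $\gamma$, both of which are implicit in the paper's argument.
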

\begin{proof}
From Proposition \ref{prop:leveBoun} and Theorem \ref{thm:SoulExist}, the optimal solution set of problem \eqref{model:3} is nonempty.
Let $\alpha\ge \inf \Phi$, where $\Phi$ is defined by \eqref{model:l0}.
 Then for any $\gamma>0$,
\begin{equation}\label{eq:12}
\min\{\Psi_\gamma(x): x\in\mathbb{R}^n\}=\min\{\Psi_\gamma(x): x\in\mathrm{lev}(\Psi_\gamma, \alpha)\}.
\end{equation}
By Item (ii) and Item (iii) of Lemma \ref{lema:3.3}, there exists a compact set $S\subset \mathbb{R}^n$ such that
\begin{equation}\label{eq:13}
\min\{\Psi_\gamma(x): x\in\mathrm{lev}(\Psi_\gamma, \alpha)\}=\min\{\Psi_\gamma(x): x\in S\}
\end{equation}
for any $\gamma\ge \gamma_0$.
Then this proposition follows from \eqref{eq:12} and \eqref{eq:13} immediately.
\end{proof}

Now, we are ready to present the main result of this section.
\begin{theorem}\label{thm:AsyCon}
Let $A\in\mathbb{R}^{r\times n}$ and $\{\gamma^k>0: k\in\mathbb{N}\}$ be an increasing sequence going to infinity. Let  $\Phi$  and $\Psi_\gamma$ be defined by \eqref{model:l0} and \eqref{model:3} respectively. If $\phi:=f\circ A$  with $f:\mathbb{R}^r\rightarrow \bar{\mathbb{R}}$ proper, lower semicontinuous,  level bounded and bounded below,  then the following statements hold:
\begin{itemize}
\item [(i)] $\min \Psi_{\gamma^k}\rightarrow \min\Phi$ as $k\rightarrow +\infty$.
\item [(ii)] $\lim\sup_k(\arg\min \Psi_{\gamma^k})\subseteq \arg\min \Phi$.
\item [(ii)] If $x^k$ is a global minimizer of $ \Psi_{\gamma^k}$ (an optimal solution  of problem \eqref{model:3} with $\gamma=\gamma^k$), then  any cluster point of $\{x^k: k\in\mathbb{N}\}$ is a global minimizer of $\Phi$ (an optimal solution of problem \eqref{model:l0}).
\end{itemize}
\end{theorem}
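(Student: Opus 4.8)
The plan is to use Proposition \ref{prop:minCompact} to confine everything to a fixed compact set $S$ and then run the standard $\Gamma$-convergence / penalty-method argument. First I would fix $\gamma_0:=\gamma^1$; since $\{\gamma^k\}$ is increasing, all $\gamma^k\ge\gamma_0$, so Proposition \ref{prop:minCompact} gives a compact $S\subset\mathbb{R}^n$ with $\min\Psi_{\gamma^k}=\min\{\Psi_{\gamma^k}(x):x\in S\}$ for every $k$. By Theorem \ref{thm:SoulExist} each $\arg\min\Psi_{\gamma^k}$ is nonempty, and by the above we may choose the minimizers $x^k\in S$; similarly $\arg\min\Phi$ is nonempty and (shrinking $S$ if necessary, or using that $\mathrm{lev}(\Phi,\alpha)\cap(S_1+S_2)\neq\emptyset$) contains a point of $S$. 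For (i), the two easy bounds: on one hand $\psi_{\gamma}\le\|\cdot\|_0$ pointwise, so $\Psi_{\gamma^k}\le\Phi$ everywhere, giving $\min\Psi_{\gamma^k}\le\min\Phi$ for all $k$; on the other hand the sequence $\{\gamma^k\}$ being increasing makes $\psi_{\gamma^k}$ nondecreasing in $k$, hence $\{\min\Psi_{\gamma^k}\}$ is nondecreasing and bounded above by $\min\Phi$, so it converges to some limit $\ell\le\min\Phi$.

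For the reverse inequality $\ell\ge\min\Phi$, I would take the minimizers $x^k\in S$ of $\Psi_{\gamma^k}$; compactness of $S$ yields a subsequence $x^{k_j}\to\bar x\in S$. The key point is to show $B\bar x$ has no ``small but nonzero'' coordinate escaping to zero: for each coordinate $i$, if $(B\bar x)_i\neq0$ then for large $j$, $|(Bx^{k_j})_i|\ge\frac12|(B\bar x)_i|$ while $\gamma^{k_j}\to+\infty$ forces $\gamma^{k_j}|(Bx^{k_j})_i|^p\ge1$, so $\varphi_{\gamma^{k_j}}((Bx^{k_j})_i)=1$; hence $\liminf_j\psi_{\gamma^{k_j}}(Bx^{k_j})\ge\|B\bar x\|_0$. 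Combined with lower semicontinuity of $\phi$ this gives
\begin{eqnarray*}
\ell=\lim_j\Psi_{\gamma^{k_j}}(x^{k_j})&=&\liminf_j\bigl(\phi(x^{k_j})+\lambda\psi_{\gamma^{k_j}}(Bx^{k_j})\bigr)\\
&\ge&\phi(\bar x)+\lambda\|B\bar x\|_0=\Phi(\bar x)\ge\min\Phi.
\end{eqnarray*}
So $\ell=\min\Phi$, proving (i), and moreover $\Phi(\bar x)=\min\Phi$, i.e.\ $\bar x\in\arg\min\Phi$.

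For (ii)/(iii) (the statement is labelled twice in the excerpt): let $x^k\in\arg\min\Psi_{\gamma^k}$ be arbitrary and $\bar x$ any cluster point, say $x^{k_j}\to\bar x$. Since all $\arg\min\Psi_{\gamma^k}\subseteq\mathrm{lev}(\Psi_{\gamma^k},\alpha)$, Item (ii) of Lemma \ref{lema:3.3} (or just Proposition \ref{prop:minCompact}) shows these minimizers lie in the compact $S$, so cluster points exist and lie in $S$. Repeating the lower-semicontinuity estimate above with $\min\Psi_{\gamma^{k_j}}\to\min\Phi$ from (i) in place of the abstract limit $\ell$ gives $\Phi(\bar x)\le\min\Phi$, hence $\bar x\in\arg\min\Phi$; this is exactly $\limsup_k(\arg\min\Psi_{\gamma^k})\subseteq\arg\min\Phi$. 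I expect the main obstacle to be the coordinatewise argument controlling $\psi_{\gamma^{k_j}}(Bx^{k_j})$ from below --- one must be careful that $\psi_\gamma\le\|\cdot\|_0$ always (so no issue with the upper bound) but that the lower bound only holds in the limit and only because $\gamma^k\to+\infty$ forces every surviving nonzero coordinate of $Bx^{k_j}$ into the ``capped'' regime where $\varphi_{\gamma}=1$; coordinates of $B\bar x$ that are zero contribute a nonnegative amount that we simply discard. Everything else is bookkeeping with the compact set furnished by Proposition \ref{prop:minCompact} and monotonicity in $\gamma$.
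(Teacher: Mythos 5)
Your proposal is correct, but it proves the theorem by a different, more elementary route than the paper. The paper's proof is a two-line appeal to epi-convergence: since $\{\gamma^k\}$ increases, $\Psi_{\gamma^k}$ increases pointwise to $\Phi$, hence $\{\Psi_{\gamma^k}\}$ epi-converges to $\Phi$ (Proposition 7.4(d) of Rockafellar--Wets), and then items (i) and (ii) are read off from Theorem 7.31(a)--(b) of that reference, with Proposition \ref{prop:minCompact} supplying the uniform compact set needed to apply 7.31(a); item (iii) follows from the epi-liminf inequality. You instead prove the two halves of the epi-convergence argument by hand: the upper bound $\min\Psi_{\gamma^k}\le\min\Phi$ from $\psi_\gamma\le\|\cdot\|_0$ together with monotonicity, and the lower bound via a coordinatewise analysis showing that every nonzero coordinate of $B\bar x$ eventually falls in the capped regime where $\varphi_{\gamma^{k_j}}=1$, so that $\liminf_j\psi_{\gamma^{k_j}}(Bx^{k_j})\ge\|B\bar x\|_0$, combined with lower semicontinuity of $\phi=f\circ A$. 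Both arguments rest on the same compactness input (Proposition \ref{prop:minCompact}); what yours buys is self-containedness --- no external variational-analysis theorem is needed --- at the cost of a longer write-up. One small inaccuracy to fix: Lemma \ref{lema:3.3}(ii) and Proposition \ref{prop:minCompact} only guarantee that \emph{some} global minimizer of $\Psi_{\gamma^k}$ lies in the compact set $S$, not that \emph{all} of them do, so your sentence ``these minimizers lie in the compact $S$, so cluster points exist'' overstates what is known. This is harmless, since items (ii) and (iii) only concern cluster points that are assumed to exist, and your liminf estimate applies to any convergent subsequence of minimizers whether or not it stays in $S$; but the claim should be rephrased accordingly.
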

\begin{proof}
According to Proposition \ref{prop:leveBoun} and Theorem \ref{thm:SoulExist}, both problems \eqref{model:l0} and \eqref{model:3} have optimal solutions. Thus $\min\Psi_\gamma$ and $\min\Phi$ exist. We also obtain $\Psi_{\gamma^{k+1}}\ge\Psi_{\gamma^k}$ for any $k\in\mathbb{N}$ because $\{\gamma^k: k\in\mathbb{N}\}$ is an increasing sequence.
Therefore, $\{\Psi_{\gamma^k}: k\in\mathbb{N}\}$ epi-converges to $\Phi$ according to Proposition 7.4 of \cite{Teboulle:asympt2003} and $\Phi=\sup_{k\in\mathbb{N}}\Psi_{\gamma^k}$. Since $\{\gamma^k: k\in\mathbb{N}\}$ is an increasing sequence, $\gamma^k\ge {\gamma^1}>0$ for all $k\in\mathbb{N}$. By Proposition \ref{prop:minCompact}, there exists a bounded closed set $S\subset \mathbb{R}^n$ such that $\min\{\Psi_{\gamma^k}: x\in\mathbb{R}^n\}=\min\{\Psi_{\gamma^k}: x\in S\}$ for all $k\in\mathbb{N}$. Then through Theorem 7.31(a) of \cite{Rockafellar2004Variational}, Item (i) follows immediately.

By Theorem 7.31(b) of \cite{Rockafellar2004Variational}, Item (ii) follows since $-\infty<\min \Phi<+\infty$ and problem \eqref{model:3} has optimal solutions.

We next prove Item (iii). Let $\bar x$ be a cluster of $\{x^k: k\in\mathbb{N}\}$. Then
$$
\Phi(\bar x)\le\lim\inf _{k\rightarrow +\infty}\Psi_{\gamma^k}(x^k)=\min \Phi.
$$
The first inequality follows from epi-convergence and the last via Item (i). Thus, Item (iii) is obtained.
\end{proof}

Theorem \ref{thm:AsyCon} establishes that optimal solutions to problem \eqref{model:l0} can be asymptotically approximated by problem \eqref{model:3} provided that $\phi$ is a level bounded function composed by a linear mapping. We will further investigate in Section \ref{sec:exactEq} the exact equivalence between optimal solutions of problems \eqref{model:l0} and \eqref{model:3} when $\phi$ are some special data fitting functions. Before then we shall study in the next section the stability for problem \eqref{model:l0} with respect to the parameter $\lambda$. These results will play an important role in Section \eqref{sec:exactEq}.

\section{Stability for problem \eqref{model:l0}}\label{sec:Sta}
This section is devoted to the stability for problem \eqref{model:l0}, including behaviors of the optimal value and optimal solution set with respect to  changes of the parameter $\lambda$.  The stability for problem \eqref{model:l0} will serve as basis for discussions on exact approximations  in Section \ref{sec:exactEq}.  The idea of analysis is drawn from \cite{Zhang-Li:IP2016}, where the stability for a special non-composite  $\ell_0$ regularization problem is studied. However, the proofs in \cite{Zhang-Li:IP2016} can not be directly extended to investigating the stability for problem \eqref{model:l0}. Therefore, we decide to provide detailed proofs  in this section.

Suppose $\phi$ satisfies H\ref{hypo:phi} in the whole of this section.  We prove in this section that the optimal function value of problem \eqref{model:l0}  changes piecewise linearly as $\lambda$ varies. While the optimal solution set to problem \eqref{model:l0} is piecewise constant with respect to $\lambda$.

We begin with introducing the notion of marginal functions \cite{Teboulle:asympt2003}. Let $F:(0,+\infty)\rightarrow \mathbb{R}$ defined at $\lambda>0$ as
\begin{equation}\label{def:F}
F(\lambda):=\min\{\phi(x)+\lambda\|Bx\|_0: x\in\mathbb{R}^n\}.
\end{equation}
Obviously, for a fixed $\lambda>0$, $F(\lambda)$ is the optimal function value of problem \eqref{model:l0}. By Theorem \ref{thm:SoulExist}, $F$ is well defined as long as $\phi$ satisfies H\ref{hypo:phi}. The function $F$ is  called the marginal function of problem \eqref{model:l0}. We also define $\Omega:(0, +\infty)\rightarrow 2^{\mathbb{R}^n}$ at $\lambda>0$ as the optimal solution set of problem \eqref{model:l0}, that is
\begin{equation}\label{def:Omega}
\Omega(\lambda):=\arg\min\{\phi(x)+\lambda\|Bx\|_0: x\in\mathbb{R}^n\}.
\end{equation}
 It is clear that $\Omega$ is also well defined.

With the help of the notations $F$ and $\Omega$, our task in this section becomes exploring the properties of $F$ and $\Omega$. To this end, we require to establish several notations in the next subsection, which will play an important role in our analysis.

\subsection{Alternating minimization sequences}
In this subsection, we generate several important sequences
by alternatingly minimizing $\phi$ and $\|B\cdot\|_0$.
\begin{definition}\label{def:sRhoOmega}
Given $\phi$ satisfying H\ref{hypo:phi} and $B\in\mathbb{R}^{m\times n}$, the integer $L$, the sets $\{s_i\in \mathbb{N}: i \in\mathbb{N}_{L}^0\}$, $\{\rho_i\ge 0: i \in\mathbb{N}_{L}^0\}$, $\{\Omega_i\subseteq \mathbb{R}^n: i \in\mathbb{N}_{L}^0\}$ are defined by the following iteration
\begin{eqnarray*}
\mathrm{set~~}& i=0,\\ &\rho_0:=&\min\{\phi(x):x\in\mathbb{R}^n\},\\
&s_0:=&\min\{\|Bx\|_0: \phi(x)=\rho_0, x\in\mathbb{R}^n\},\\
&\Omega_0:=&\mathrm{arg}\min\{\|Bx\|_0: \phi(x)=\rho_0, x\in\mathbb{R}^n\},\\
\mathrm{while~~}&s_i>0 &\mathrm{~and~}  \mathrm{dom}(\phi)\cap \{x\in\mathbb{R}^n:\|Bx\|_0\le s_i-1\}\neq \emptyset\\
&\rho_{i+1}:=&\min\{\phi(x): \|Bx\|_0\le s_i-1, x\in\mathbb{R}^n\},\\
&s_{i+1}:=&\min\{\|Bx\|_0: \phi(x)=\rho_{i+1}, x\in\mathbb{R}^n\},\\
&\Omega_{i+1}:=&\arg\min\{\|Bx\|_0: \phi(x)=\rho_{i+1}, x\in\mathbb{R}^n\},\\
&i=&i+1,\\
\mathrm{end}&&\\
&L:=i.\\
\end{eqnarray*}
\end{definition}

We first show that Definition \ref{def:sRhoOmega} is well defined. It suffices to prove both the following two optimization problems
\begin{equation}\label{eq:subpConsL0}
\min\{\phi(x):\|Bx\|_0\le k, x\in\mathbb{R}^n\}
\end{equation}
and
\begin{equation}\label{eq:subpL0}
\min\{\|Bx\|_0: x\in S\}
\end{equation}
have optimal solutions when $\mathrm{dom}(\phi)\cap \{x\in\mathbb{R}^n: \|Bx\|_0\le k\}\neq \emptyset$ and $\emptyset \neq S\subseteq\mathbb{R}^n$.
 It is obvious that problem \eqref{eq:subpL0} has an optimal solution since the objective function is piecewise constant and has finite values. We will reveal that if $\phi$ satisfies H\ref{hypo:phi}, the optimal solution set of problem \eqref{eq:subpConsL0} is always nonempty as long as $\mathrm{dom}(\phi)\cap \{x\in\mathbb{R}^n: \|Bx\|_0\le k\}\neq \emptyset$. Before that, we require to recall the notion of asymptotically linear sets \cite{Teboulle:asympt2003}.
 \begin{definition}\label{def:asyLinSet}
 Let $C\subseteq \mathbb{R}^n$ be a nonempty closed set. Then $C$ is said to be an asymptotically linear set if for each $\rho>0$ and each sequence $\{x^k: k\in\mathbb{N}\}$ satisfying
 \begin{equation}\label{eq:5.1}
 x^k\in C, \|x^k\|_2\rightarrow +\infty, x^k\|x^k\|_2^{-1}\rightarrow \bar x,
 \end{equation}
 there exists $k_0\in\mathbb{N}$ such that
 $$
 x^k-\rho\bar x\in C
 $$
 for any $k\ge k_0$.
 \end{definition}
 \begin{remark}\label{remark:asyLiSet}
 According to Definitions \ref{def:asyLinSet} and \ref{def:als},
 $\iota_C$ is asymptotically level stable if $C\subseteq\mathbb{R}^n$ is asymptotically linear.
 \end{remark}

 The next lemma exhibits a class of asymptotically linear sets.
 \begin{lemma}\label{lema:asyLiSetScec5}
 Let $A$ be a $t\times n$ matrix and $b\in\mathbb{R}^t$. Then, for any $k\in\mathbb{N}_t^0$, $O:=\{x\in\mathbb{R}^n:\|Ax-b\|_0\le k\}$ is either empty or asymptotically linear.
 \end{lemma}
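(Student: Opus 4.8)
The plan is to verify Definition \ref{def:asyLinSet} directly for $O:=\{x\in\mathbb{R}^n:\|Ax-b\|_0\le k\}$ whenever $O\neq\emptyset$. The set $O$ is a finite union of affine subspaces: indeed, for each index set $\Gamma\subseteq\mathbb{N}_t$ with $|\Gamma|\ge t-k$, the affine set $V_\Gamma:=\{x:(Ax-b)_i=0\text{ for all }i\in\Gamma\}$ is closed, and $O=\bigcup_{\Gamma}V_\Gamma$ over those $\Gamma$ with $V_\Gamma\neq\emptyset$; hence $O$ is closed, which is the first requirement. Now fix $\rho>0$ and a sequence $\{x^k:k\in\mathbb{N}\}$ satisfying \eqref{eq:5.1}, i.e. $x^k\in O$, $\|x^k\|_2\to+\infty$, and $x^k\|x^k\|_2^{-1}\to\bar x$. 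We want $k_0$ with $x^k-\rho\bar x\in O$ for all $k\ge k_0$.

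The key observation is the same localization-of-support argument used in Propositions \ref{prop:l0ALS} and \ref{prop:Asy+L0asl}. Set $\Lambda:=\mathrm{supp}(A\bar x)$ and $\Lambda^C:=\mathbb{N}_t\setminus\Lambda$. For $i\in\Lambda^C$ we have $(A\bar x)_i=0$, so $(A(x^k-\rho\bar x)-b)_i=(Ax^k-b)_i$, and in particular $\mathrm{supp}(A(x^k-\rho\bar x)-b)\cap\Lambda^C=\mathrm{supp}(Ax^k-b)\cap\Lambda^C$. For $i\in\Lambda$, since $(Ax^k)_i/\|x^k\|_2\to(A\bar x)_i\neq 0$ and $\|x^k\|_2\to+\infty$, we get $|(Ax^k)_i|\to+\infty$, so there is $k_0$ such that $(Ax^k-b)_i\neq 0$ for all $k\ge k_0$ and all $i\in\Lambda$ (finitely many indices). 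Thus for $k\ge k_0$, every $i\in\Lambda$ already lies in $\mathrm{supp}(Ax^k-b)$, so replacing $x^k$ by $x^k-\rho\bar x$ can only change the support within $\Lambda^C$, where it does not change at all. Concretely,
\begin{eqnarray*}
\|A(x^k-\rho\bar x)-b\|_0
&=&\big|\mathrm{supp}(A(x^k-\rho\bar x)-b)\cap\Lambda\big|+\big|\mathrm{supp}(A(x^k-\rho\bar x)-b)\cap\Lambda^C\big|\\
&\le&|\Lambda|+\big|\mathrm{supp}(Ax^k-b)\cap\Lambda^C\big|\\
&=&\big|\mathrm{supp}(Ax^k-b)\cap\Lambda\big|+\big|\mathrm{supp}(Ax^k-b)\cap\Lambda^C\big|\\
&=&\|Ax^k-b\|_0\le k
\end{eqnarray*}
for all $k\ge k_0$, where the second-to-last equality uses that $\Lambda\subseteq\mathrm{supp}(Ax^k-b)$ for $k\ge k_0$. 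Hence $x^k-\rho\bar x\in O$ for all $k\ge k_0$, which is exactly the defining property of an asymptotically linear set.

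I do not anticipate a genuine obstacle here; the proof is a repackaging of the estimate already carried out twice in Section \ref{sec:als}. The only point requiring a little care is the bookkeeping that $\Lambda\subseteq\mathrm{supp}(Ax^k-b)$ for large $k$, which is what legitimizes splitting the $\ell_0$ count over $\Lambda$ and $\Lambda^C$ and bounding the $\Lambda$-part by $|\Lambda|$ without overcounting — this is the step where one must use $k\ge k_0$ rather than an arbitrary $k$. Closedness of $O$ is immediate from its description as a finite union of affine subspaces, but it is worth stating explicitly since Definition \ref{def:asyLinSet} demands it. One should also note the trivial cases: if $\bar x=0$ (so $\Lambda=\emptyset$) the inclusion $x^k-\rho\bar x=x^k\in O$ is automatic, and if $O=\emptyset$ there is nothing to prove, matching the ``either empty'' clause of the statement.
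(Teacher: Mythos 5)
Your proof is correct, but it follows a different route from the paper's. The paper decomposes $O$ as the finite union $\bigcup_i O_i$ of the affine sets $O_i=\{x: A_{\Lambda_i}x=b_{\Lambda_i}\}$ over all $\Lambda_i\subseteq\mathbb{N}_t$ with $|\Lambda_i|=t-k$, invokes the fact that a finite union of asymptotically linear sets is asymptotically linear, and then handles each nonempty $O_i$ by noting that $A_{\Lambda_i}x^k=b_{\Lambda_i}$ bounded and $\|x^k\|_2\to+\infty$ force $A_{\Lambda_i}\bar x=\mathbf{0}$, whence $x^k-\rho\bar x\in O_i$ for \emph{every} $k$. You instead verify Definition \ref{def:asyLinSet} directly on $O$ by the support-splitting argument of Proposition \ref{prop:l0ALS}: set $\Lambda=\mathrm{supp}(A\bar x)$, observe the coordinates in $\Lambda^C$ are unchanged by the shift while the coordinates in $\Lambda$ are eventually nonzero, and count. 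Both are valid; the trade-off is that the paper's version is modular (the union-closure fact and the affine-piece computation are reused later, e.g.\ in Lemma \ref{lema:asyLineSetSec6.2}) but quietly relies on the union-closure property, whose proof requires splitting the sequence among the pieces and extracting a uniform $k_0$ from finitely many subsequences --- a step the paper does not spell out. Your version is self-contained, avoids that appeal entirely, and produces the threshold $k_0$ in one pass; its only cost is the mild bookkeeping you already flag, namely that $\Lambda\subseteq\mathrm{supp}(Ax^k-b)$ for $k\ge k_0$ is what prevents overcounting when you bound the $\Lambda$-part by $|\Lambda|$. (A purely cosmetic remark: the letter $k$ does double duty as the sparsity level and the sequence index; it would be worth renaming one of them.)
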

\begin{proof}
Suppose $O\neq \emptyset$. Let $\Lambda_i\subseteq \mathbb{N}_t$ satisfying $|\Lambda_i|=t-k$. One can check that there are $C_t^k$ such sets. For each $i\in\mathbb{N}_{C_t^k}$, set $O_i:=\{x\in\mathbb{R}^n: A_{\Lambda_i} x=b_{\Lambda_i}\}$. Then $O=\bigcup_{i=1}^{C_t^k}O_i$.
By Definition \ref{def:asyLinSet}, the union of a finite number of asymptotically linear sets is also an asymptotically linear set. Therefore, in order to get this lemma,  it suffices to prove $O_i$ is either empty or  asymptotically linear for any $i\in\mathbb{N}_{C_t^k}$.

Let $i\in\mathbb{N}_{C_t^k}$ satisfying $O_i\neq\emptyset$ and set $C:=O_i$. Let $\rho>0$ and $\{x^k: k\in\mathbb{N}\}$ be any  sequence satisfying \eqref{eq:5.1}. Then we have $A_{\Lambda_i}\bar x=\mathbf{0}_{t-k}$. Thus $A_{\Lambda_i}(x^k-\rho\bar x)=A_{\Lambda_i}x^k=b_{\Lambda_i}$, that is $x^k-\rho\bar x\in C$ for any $k\in\mathbb{N}$. Therefore, $O_i$ is asymptotically linear. We then obtain this lemma immediately.
\end{proof}
\begin{lemma}\label{lema:L0constSolu}
Given $\phi$ satisfying H\ref{hypo:phi},  $B\in\mathbb{R}^{m\times n}$ and $k\in\mathbb{N}_m^0$, the optimal solution set to problem \eqref{eq:subpConsL0} is nonempty if $\mathrm{dom}(\phi)\cap \{x\in\mathbb{R}^n:\|Bx\|_0\le k\}\neq \emptyset$.
\end{lemma}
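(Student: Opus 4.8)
The plan is to reduce problem \eqref{eq:subpConsL0} to finitely many minimizations of $\phi$ over linear subspaces and to settle each of those via Theorem \ref{thm:alsExist}. First I would record the elementary fact that $\|Bx\|_0\le k$ holds if and only if $B_\Lambda x=\mathbf{0}_{|\Lambda|}$ for some $\Lambda\subseteq\mathbb{N}_m$ with $|\Lambda|=m-k$, so the feasible set $O:=\{x\in\mathbb{R}^n:\|Bx\|_0\le k\}$ equals $\bigcup_{i=1}^N\mathbb{S}_i$ with $\mathbb{S}_i:=\mathrm{ker}(B_{\Lambda_i})$, where $\Lambda_1,\dots,\Lambda_N$ enumerates the index sets of cardinality $m-k$ (this is precisely the decomposition already used in the proof of Lemma \ref{lema:asyLiSetScec5} with $b=\mathbf{0}_m$). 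Consequently $\inf\{\phi(x):x\in O\}=\min_{i\in\mathbb{N}_N}\inf\{\phi(x):x\in\mathbb{S}_i\}$, and since $\mathrm{dom}(\phi)\cap O\neq\emptyset$ the set $J:=\{i\in\mathbb{N}_N:\mathbb{S}_i\cap\mathrm{dom}(\phi)\neq\emptyset\}$ is nonempty (for $i\notin J$ the inner infimum is $+\infty$). Thus, once I know that each $\inf\{\phi(x):x\in\mathbb{S}_i\}$ with $i\in J$ is attained, say at $x^{(i)}$, any $i^\ast\in\arg\min_{i\in J}\phi(x^{(i)})$ yields a point $x^{(i^\ast)}\in\mathbb{S}_{i^\ast}\subseteq O$ realizing $\inf\{\phi:O\}$, i.e. an optimal solution of \eqref{eq:subpConsL0}.

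It then remains to prove the core claim: for a subspace $\mathbb{S}\subseteq\mathbb{R}^n$ with $\mathbb{S}\cap\mathrm{dom}(\phi)\neq\emptyset$, the problem $\min\{\phi(x):x\in\mathbb{S}\}$ has a solution. I would fix an orthonormal basis $\xi_1,\dots,\xi_d$ of $\mathbb{S}$, collect it into $\Xi\in\mathbb{R}^{n\times d}$ (so $\Xi^\top\Xi=I_d$ and $\|\Xi y\|_2=\|y\|_2$ for all $y$), and pass to $g:=\phi\circ\Xi:\mathbb{R}^d\rightarrow\bar{\mathbb{R}}$; then $\min\{\phi(x):x\in\mathbb{S}\}=\min\{g(y):y\in\mathbb{R}^d\}$, and $g$ is proper, lower semicontinuous and bounded below. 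By Theorem \ref{thm:alsExist} it suffices to verify that $g$ is asymptotically level stable, and this transfer is the main (if mild) obstacle, since asymptotic level stability is not automatically preserved under composition with a linear map (Proposition \ref{prop:leveBoun} assumes level boundedness) — the point is to exploit that $\Xi$ is an isometry. Two observations make it go through: because $\phi$ is bounded below, the analytic representation \eqref{eq:AsyFunAna} gives $\phi_\infty\ge 0$ on $\mathbb{R}^n$; and \eqref{eq:AsyFunAna} also gives $\phi_\infty(\Xi y)\le g_\infty(y)$ for every $y$, since the $\liminf$ defining $g_\infty(y)$ runs over the sub-collection of pairs $(x',t)$ with $x'=\Xi y'\in\mathbb{S}$ among those defining $\phi_\infty(\Xi y)$. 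Together these yield $\Xi(\mathrm{ker}(g_\infty))\subseteq\mathrm{ker}(\phi_\infty)$.

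Given $\rho>0$, a bounded real sequence $\{t^k:k\in\mathbb{N}\}$ and $\{y^k:k\in\mathbb{N}\}$ with $y^k\in\mathrm{lev}(g,t^k)$, $\|y^k\|_2\rightarrow+\infty$ and $y^k\|y^k\|_2^{-1}\rightarrow\bar y\in\mathrm{ker}(g_\infty)$, I would then set $x^k:=\Xi y^k$ and $\bar x:=\Xi\bar y$. Since $\Xi$ is an isometry, $\phi(x^k)=g(y^k)\le t^k$, $\|x^k\|_2\rightarrow+\infty$ and $x^k\|x^k\|_2^{-1}\rightarrow\bar x\in\mathrm{ker}(\phi_\infty)$, so asymptotic level stability of $\phi$ supplies $k_0\in\mathbb{N}$ with $x^k-\rho\bar x\in\mathrm{lev}(\phi,t^k)$ for all $k\ge k_0$; but $x^k-\rho\bar x=\Xi(y^k-\rho\bar y)$, hence $g(y^k-\rho\bar y)\le t^k$, i.e. $y^k-\rho\bar y\in\mathrm{lev}(g,t^k)$, for all $k\ge k_0$, which is exactly the asymptotic level stability of $g$. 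Everything beyond this transfer argument — the finite union decomposition of $O$ and the remark that a minimum over finitely many attained infima is itself attained — is routine bookkeeping.
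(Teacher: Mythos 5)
Your proof is correct, but it takes a genuinely different route from the paper's. The paper's proof is three lines: it notes $\phi_\infty\ge 0$ (from boundedness below via \eqref{eq:AsyFunAna}), invokes Lemma \ref{lema:asyLiSetScec5} to see that $C:=\{x:\|Bx\|_0\le k\}$ is asymptotically linear, and then cites Proposition 3.3.3 of \cite{Teboulle:asympt2003} to conclude that $\phi+\iota_C$ is asymptotically level stable, so Theorem \ref{thm:alsExist} applies directly to the constrained problem as a whole. You instead break $C$ into the finite union of subspaces $\mathrm{ker}(B_{\Lambda_i})$ (the same decomposition underlying Lemma \ref{lema:asyLiSetScec5}), reduce to attainment of $\inf\{\phi:\mathbb{S}\}$ on each subspace meeting $\mathrm{dom}(\phi)$, and prove from scratch that the restriction $g=\phi\circ\Xi$ inherits asymptotic level stability from $\phi$ via the isometry $\Xi$; the two ingredients you isolate --- $\phi_\infty\ge 0$ and $g_\infty\ge\phi_\infty\circ\Xi$ (liminf over a sub-collection), giving $\Xi(\mathrm{ker}(g_\infty))\subseteq\mathrm{ker}(\phi_\infty)$ --- are exactly what is needed to transfer the defining condition, and the argument is sound, including the final bookkeeping that a minimum over finitely many attained infima is attained. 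What your approach buys is self-containedness: it replaces the appeal to the external stability-under-addition-of-an-indicator result with an elementary restriction lemma that is interesting in its own right (an asymptotically level stable function restricted to a subspace remains asymptotically level stable). What it costs is length and a small amount of redundancy with Lemma \ref{lema:asyLiSetScec5}; the paper's version is shorter because it reuses that lemma wholesale and outsources the combination step to the cited monograph.
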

\begin{proof}
According to the analytic representation \eqref{eq:AsyFunAna} of the asymptotic function, we obtain that  $\phi_\infty(d)\ge 0$ for all $d\in\mathbb{R}^n$ since $\phi$ is bounded below. By Lemma \ref{lema:asyLiSetScec5}, $C:=\{x\in\mathbb{R}^n: \|Bx\|_0\le k\}$ is asymptotically linear. Then, from Proposition 3.3.3 in \cite{Teboulle:asympt2003}, $\phi+\iota_C$ is asymptotically level stable due to the fact that $\phi$ is asymptotically level stable. Then, problem \eqref{eq:subpConsL0} has an optimal solution by Theorem \ref{thm:alsExist}.
\end{proof}

 From Lemma \ref{lema:L0constSolu}, Definition \ref{def:sRhoOmega} is well defined. We present this result in the following proposition.

\begin{proposition}\label{prop:rhoSOmegaExist}
 Given $\phi$ satisfying H\ref{hypo:phi} and $B\in\mathbb{R}^{m\times n}$, the integer $L$ and $s_i, \rho_i, \Omega_i$ for $i\in\mathbb{N}_{L}^0$ are well defined by Definition \ref{def:sRhoOmega}.
\end{proposition}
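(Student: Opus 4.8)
The plan is to argue by induction on the loop counter $i$ that, each time the body of the \textbf{while} loop in Definition \ref{def:sRhoOmega} is entered, all three subproblems defining $\rho_i$, $s_i$, $\Omega_i$ possess optimal solutions (so the corresponding $\min$ and $\arg\min$ are meaningful and $\rho_i, s_i$ are finite), and then to prove separately that the loop terminates after finitely many steps, which makes $L:=i$ well defined. The existence part has essentially already been reduced to two facts available in the excerpt: problem \eqref{eq:subpL0} always has an optimal solution because $\|B\cdot\|_0$ is piecewise constant with finitely many (nonnegative integer) values, and problem \eqref{eq:subpConsL0} has an optimal solution whenever $\mathrm{dom}(\phi)\cap\{x:\|Bx\|_0\le k\}\neq\emptyset$ by Lemma \ref{lema:L0constSolu}.

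For the base case, Theorem \ref{thm:alsExist} applied to $\phi$ (which satisfies H\ref{hypo:phi}) shows $\rho_0=\min\{\phi(x):x\in\mathbb{R}^n\}$ is attained and finite, so $\{x:\phi(x)=\rho_0\}$ is a nonempty subset of $\mathbb{R}^n$; invoking the existence statement for \eqref{eq:subpL0} with this set in place of $S$ gives that $s_0$ and $\Omega_0$ are well defined. For the inductive step, suppose the iteration has reached index $i$ and the loop condition holds, that is $s_i\ge 1$ and $\mathrm{dom}(\phi)\cap\{x:\|Bx\|_0\le s_i-1\}\neq\emptyset$. Since an $\ell_0$-count of a vector in $\mathbb{R}^m$ never exceeds $m$ we have $s_i\le m$, hence $s_i-1\in\mathbb{N}_m^0$, so Lemma \ref{lema:L0constSolu} with $k=s_i-1$ yields that $\rho_{i+1}$ is attained; it is finite because it is bounded below by $\inf\phi>-\infty$ and bounded above by any value of $\phi$ on the (nonempty) intersection of $\mathrm{dom}(\phi)$ with the feasible set. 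Consequently $\{x:\phi(x)=\rho_{i+1}\}\neq\emptyset$, and exactly as in the base case the existence of an optimal solution to \eqref{eq:subpL0} over this set makes $s_{i+1}$ and $\Omega_{i+1}$ well defined.

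It remains to show the loop is executed only finitely often. Any optimal solution $x^*$ of the problem defining $\rho_{i+1}$ satisfies $\|Bx^*\|_0\le s_i-1$ and $\phi(x^*)=\rho_{i+1}$, so $s_{i+1}=\min\{\|Bx\|_0:\phi(x)=\rho_{i+1}\}\le\|Bx^*\|_0\le s_i-1$; thus $\{s_i\}$ is a strictly decreasing sequence of nonnegative integers starting from $s_0\le m$, and the clause $s_i>0$ of the loop condition must fail after at most $m$ iterations, giving $L\le m<+\infty$. The only mildly delicate point in the whole argument is the bookkeeping that keeps $s_i-1$ inside the admissible range $\mathbb{N}_m^0$ demanded by Lemma \ref{lema:L0constSolu}; this is guaranteed precisely by the two clauses of the \textbf{while} condition together with the bound $s_i\le m$, so no genuine obstacle arises beyond assembling the pieces already established.
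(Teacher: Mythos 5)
Your argument is correct and follows essentially the same route as the paper, which derives the proposition directly from Lemma \ref{lema:L0constSolu} (for problem \eqref{eq:subpConsL0}) together with the observation that problem \eqref{eq:subpL0} is trivially solvable because $\|B\cdot\|_0$ takes only finitely many values. The explicit induction and the termination bound $L\le m$ that you add are just the bookkeeping the paper leaves implicit (the latter reappearing as Items (i)--(ii) of Proposition \ref{prop:rhoSOmega}), so no new ideas are needed and none are missing.
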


We then provide some properties of $L$ and $s_i, \rho_i, \Omega_i$ for $ i \in\mathbb{N}_{L}^0$ defined by Definition \ref{def:sRhoOmega} in the following proposition.
\begin{proposition}\label{prop:rhoSOmega}
Given $\phi$ satisfying H\ref{hypo:phi} and $B\in\mathbb{R}^{m\times n}$, let $L$ and $s_i, \rho_i, \Omega_i$ for $i\in\mathbb{N}_{L}^0$ be defined by Definition \ref{def:sRhoOmega}. Then the following statements hold:
\begin{itemize}
\item [(i)] $0\le L\le m$.
\item [(ii)] $m\ge s_0>s_1>\dots>s_L\ge 0$.
\item [(iii)] $\rho_0<\rho_1<\dots<\rho_L$.
\item [(iv)]  $\Omega_i\neq \emptyset$ and $\Omega_i\cap \Omega_j=\emptyset$, for $i\neq j$, $i, j\in\mathbb{N}_L^0$.
\item [(v)] $\Omega_i=\{x: \|Bx\|_0=s_i, \phi(x)=\rho_i, x\in\mathbb{R}^n\}$ for $i \in\mathbb{N}_{L}^0$.
\item [(vi)] $\mathrm{dom}(\phi)\cap\{x\in\mathbb{R}^n: \|Bx\|_0\le s_L-1\}=\emptyset$.
\end{itemize}
\end{proposition}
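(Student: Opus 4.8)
The plan is to verify the six statements essentially by unwinding Definition~\ref{def:sRhoOmega} and exploiting monotonicity of the iteration. Throughout, I would rely on Proposition~\ref{prop:rhoSOmegaExist} (which guarantees that each quantity produced by the iteration is well defined) and on the fact that $\|Bx\|_0$ takes only integer values in $\{0,1,\dots,m\}$.

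First I would prove (ii). At iteration $i$, the while-loop is entered only when $s_i>0$ and $\mathrm{dom}(\phi)\cap\{x:\|Bx\|_0\le s_i-1\}\neq\emptyset$. Since $\rho_{i+1}=\min\{\phi(x):\|Bx\|_0\le s_i-1\}$ is attained (Lemma~\ref{lema:L0constSolu}), any minimizer $x$ satisfies $\|Bx\|_0\le s_i-1$, and $s_{i+1}=\min\{\|Bx\|_0:\phi(x)=\rho_{i+1}\}\le\|Bx\|_0\le s_i-1<s_i$. Together with $s_0=\min\{\|Bx\|_0:\phi(x)=\rho_0\}\le m$ and $s_L\ge 0$, this gives $m\ge s_0>s_1>\dots>s_L\ge 0$. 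Statement (i), $0\le L\le m$, is then immediate: the strictly decreasing sequence $s_0>s_1>\dots>s_L$ of integers in $[0,m]$ can have at most $m+1$ terms, so $L\le m$; and $L\ge 0$ by construction.

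Next I would prove (iii). Fix $i$ with $0\le i\le L-1$. Since the loop was entered, the feasible set $\{x:\|Bx\|_0\le s_i-1\}$ is nonempty and $\rho_{i+1}$ is its minimum of $\phi$. I claim every $x$ with $\|Bx\|_0\le s_i-1$ satisfies $\phi(x)>\rho_i$: indeed if $\phi(x)\le\rho_i$ then, since $\rho_i$ is the minimum of $\phi$ over $\{x:\|Bx\|_0\le s_{i-1}-1\}$ for $i\ge 1$ (or $\rho_0=\min\phi$ for $i=0$), we would get $\phi(x)=\rho_i$; but then $x$ is feasible for the problem defining $s_i$, forcing $s_i\le\|Bx\|_0\le s_i-1$, a contradiction. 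Hence $\rho_{i+1}=\min\{\phi(x):\|Bx\|_0\le s_i-1\}>\rho_i$, proving (iii). I would also record here statement (v): by the definition of $s_i$ we have $s_i=\min\{\|Bx\|_0:\phi(x)=\rho_i\}$, so $\Omega_i\subseteq\{x:\|Bx\|_0=s_i,\phi(x)=\rho_i\}$; the reverse inclusion holds because any such $x$ is a minimizer of $\|Bx\|_0$ over $\{x:\phi(x)=\rho_i\}$.

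For (iv), nonemptiness of each $\Omega_i$ follows from Lemma~\ref{lema:L0constSolu} (well-definedness of the iteration) as in Proposition~\ref{prop:rhoSOmegaExist}: the $\arg\min$ in problem~\eqref{eq:subpL0} is attained since $\|B\cdot\|_0$ is piecewise constant with finitely many values. Disjointness for $i\neq j$ is then immediate from (v) and (iii): a point in $\Omega_i\cap\Omega_j$ would satisfy $\phi(x)=\rho_i$ and $\phi(x)=\rho_j$ with $\rho_i\neq\rho_j$. Finally, (vi): the while-loop terminates at $i=L$, so either $s_L=0$ or $\mathrm{dom}(\phi)\cap\{x:\|Bx\|_0\le s_L-1\}=\emptyset$; in the first case $s_L-1<0$ so $\{x:\|Bx\|_0\le s_L-1\}=\emptyset$ and the claimed intersection is trivially empty, and in the second case it is empty by the loop-exit condition. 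In both cases $\mathrm{dom}(\phi)\cap\{x:\|Bx\|_0\le s_L-1\}=\emptyset$.

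I expect no serious obstacle here — the whole proposition is a bookkeeping exercise on the defining iteration. The one point requiring a little care is the argument in (iii) showing $\phi(x)>\rho_i$ for every $x$ with $\|Bx\|_0\le s_i-1$, which needs the characterization of $\rho_i$ as a minimum over the previous feasible set together with the definition of $s_i$ as the smallest $\ell_0$-value achieving that minimum; handling the base case $i=0$ (where $\rho_0=\min\phi$ and there is no "previous" constrained problem) separately keeps the argument clean. Everything else follows from integrality of $\|B\cdot\|_0$ and the strict monotonicity already established.
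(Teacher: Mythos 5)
Your proof is correct, and it is exactly the routine unwinding of Definition \ref{def:sRhoOmega} that the paper has in mind when it omits the proof with the remark that ``all the results follow immediately from Definition \ref{def:sRhoOmega}.'' The one step that genuinely needs an argument --- showing $\phi(x)>\rho_i$ for every $x$ with $\|Bx\|_0\le s_i-1$, which drives item (iii) --- is handled correctly, including the base case $i=0$.
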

We omit the proof of Proposition \ref{prop:rhoSOmega} here since all the results follow immediately from  Definition \ref{def:sRhoOmega}.

With the help of Definition \ref{def:sRhoOmega},   the Euclid space $\mathbb{R}^n$ can be partitioned into $L+2$ sets: $\{x\in\mathbb{R}^n: \|Bx\|_0\ge s_0\}$, $\{x\in\mathbb{R}^n: s_{i+1}\le\|Bx\|_0\le s_i-1\}$, $i  \in\mathbb{N}_{L-1}^0$ and $\{x\in\mathbb{R}^n: \|Bx\|_0\le s_L-1\}$ (if $s_L=0$ this set is $\emptyset$).  Note that $\mathrm{dom}(\phi)\cap\{x\in\mathbb{R}^n:\|Bx\|_0\le s_L-1\}=\emptyset$. Therefore, in order to establish the optimal solutions to problem \eqref{model:l0}, we first explore the  optimal solutions to problems
\begin{equation}\label{eq:subPartition1}
\min\{\phi(x)+\lambda\|Bx\|_0: \|Bx\|_0\ge s_0, x\in\mathbb{R}^n\},
\end{equation}
and
\begin{equation}\label{eq:subPartition2}
\min\{\phi(x)+\lambda\|Bx\|_0: s_{i+1}\le\|Bx\|_0\le s_i-1, x\in\mathbb{R}^n\}
\end{equation}
for all $i \in\mathbb{N}_{L-1}^0$. We present the desired results in the following lemma.
\begin{lemma}\label{lema:partitionSolution}
Given $\phi$ satisfying H\ref{hypo:phi} and $B\in\mathbb{R}^{m\times n}$, let $L$ and $s_i, \rho_i, \Omega_i$ for $i\in\mathbb{N}_{L}^0$ be defined by Definition \ref{def:sRhoOmega}. Then  for any $\lambda>0$, the following statements hold:
\begin{itemize}
\item [(i)] The optimal solution sets to problems \eqref{eq:subPartition1} and \eqref{eq:subPartition2} with $i  \in\mathbb{N}_{L-1}^0$ are not empty.
\item [(ii)] The optimal value of problem \eqref{eq:subPartition1} is $\rho_0+\lambda s_0$ and the optimal solution set to problem \eqref{eq:subPartition1} is $\Omega_0$.
\item [(iii)] The optimal value of problem \eqref{eq:subPartition2} is $\rho_{i+1}+\lambda s_{i+1}$ and the optimal solution set to problem \eqref{eq:subPartition2} is $\Omega_{i+1}$, for $i  \in\mathbb{N}_{L-1}^0$.
\end{itemize}
\end{lemma}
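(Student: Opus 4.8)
The plan is to use the additive structure of the objective $\phi(x)+\lambda\|Bx\|_0$: bound the two summands separately, using the feasibility constraint on $\|Bx\|_0$ to control one and the defining minimization of the $\rho_i$'s to control the other, and then read off the equality cases from Proposition \ref{prop:rhoSOmega}(v). Throughout I would invoke Proposition \ref{prop:rhoSOmegaExist} (and Lemma \ref{lema:L0constSolu} behind it) to know that each $\rho_i$ is finite and attained, so that all minima in question are genuinely attained.

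For problem \eqref{eq:subPartition1}, I would first note that every feasible $x$ satisfies $\phi(x)\ge \rho_0=\min\phi$ and $\|Bx\|_0\ge s_0$, so $\phi(x)+\lambda\|Bx\|_0\ge \rho_0+\lambda s_0$. Conversely, for $x\in\Omega_0$ Proposition \ref{prop:rhoSOmega}(v) gives $\phi(x)=\rho_0$ and $\|Bx\|_0=s_0\ge s_0$, so such $x$ is feasible and attains the bound $\rho_0+\lambda s_0$; since $\Omega_0\neq\emptyset$ by Proposition \ref{prop:rhoSOmega}(iv), the minimum of \eqref{eq:subPartition1} is attained and equals $\rho_0+\lambda s_0$. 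Finally, equality $\phi(x)+\lambda\|Bx\|_0=\rho_0+\lambda s_0$ under $\|Bx\|_0\ge s_0$ forces both $\phi(x)=\rho_0$ and $\|Bx\|_0=s_0$, hence $x\in\Omega_0$ again by Proposition \ref{prop:rhoSOmega}(v). This settles (ii) together with the $\Omega_0$ part of (i).

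For problem \eqref{eq:subPartition2} with $i\in\mathbb{N}_{L-1}^0$ I would argue in exactly the same way. Here $i+1\in\{1,\dots,L\}$, so $\rho_{i+1},s_{i+1},\Omega_{i+1}$ are defined, and because the while-loop ran at step $i$ the value $\rho_{i+1}=\min\{\phi(x):\|Bx\|_0\le s_i-1\}$ is well defined and attained. Any $x$ with $s_{i+1}\le\|Bx\|_0\le s_i-1$ then satisfies $\phi(x)\ge\rho_{i+1}$ and $\|Bx\|_0\ge s_{i+1}$, whence $\phi(x)+\lambda\|Bx\|_0\ge\rho_{i+1}+\lambda s_{i+1}$. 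For an $x\in\Omega_{i+1}$, Proposition \ref{prop:rhoSOmega}(v) gives $\phi(x)=\rho_{i+1}$, $\|Bx\|_0=s_{i+1}$, and the strict decrease $s_{i+1}<s_i$ from Proposition \ref{prop:rhoSOmega}(ii) together with integrality yields $s_{i+1}\le s_i-1$, so $x$ is feasible and attains $\rho_{i+1}+\lambda s_{i+1}$. Since $\Omega_{i+1}\neq\emptyset$ (Proposition \ref{prop:rhoSOmega}(iv)), the minimum of \eqref{eq:subPartition2} equals $\rho_{i+1}+\lambda s_{i+1}$, and as before equality in the lower bound pushes any minimizer into $\Omega_{i+1}$. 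This gives (iii) and the remaining part of (i).

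I do not expect a genuine obstacle here; the proof is essentially bookkeeping. The only points needing care are: that each $\rho_{i+1}$ is finite and attained, which is precisely Proposition \ref{prop:rhoSOmegaExist} (resting on Lemma \ref{lema:L0constSolu} and the asymptotic level stability of $\phi$ restricted to the asymptotically linear set $\{x:\|Bx\|_0\le s_i-1\}$); the index range $i+1\in\{1,\dots,L\}$, so that the objects $\rho_{i+1},s_{i+1},\Omega_{i+1}$ exist; and the integrality step $s_{i+1}<s_i\Rightarrow s_{i+1}\le s_i-1$, which is what makes elements of $\Omega_{i+1}$ feasible for \eqref{eq:subPartition2}. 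All of these are immediate from Definition \ref{def:sRhoOmega} and Proposition \ref{prop:rhoSOmega}. (The case $L=0$ is vacuous for \eqref{eq:subPartition2}.)
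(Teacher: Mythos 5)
Your proposal is correct and follows essentially the same route as the paper: bound $\phi$ and $\lambda\|B\cdot\|_0$ separately on each piece of the partition, observe that both bounds are attained simultaneously on $\Omega_0$ (resp.\ $\Omega_{i+1}$), and use Proposition \ref{prop:rhoSOmega}(v) to identify the equality set. The only cosmetic difference is that you extract the reverse inclusion from the equality case of the two separate lower bounds, whereas the paper argues it by contradiction via $\|Bx^*\|_0\ge s_0+1$; these are interchangeable.
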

\begin{proof}
We only need to prove Items (ii) and (iii) since they imply Item (i).

We first prove Item (ii). It is obvious that restricted to the set $\{x\in\mathbb{R}^n: \|Bx\|_0\ge s_0\}$, the minimal value of the last term $\|Bx\|_0$ is $s_0$ and can be attained at any $x\in\Omega_0$. By Definition \ref{def:sRhoOmega}, the minimal value of $\phi$ is $\rho_0$ and can be attained at any $x\in\Omega_0$. Therefore, the optimal value of problem \eqref{eq:subPartition1} is $\rho_0+\lambda s_0$. Let $\Omega^*$ be the optimal solution set of problem \eqref{eq:subPartition1}. Clearly, $\Omega_0\subseteq \Omega^*$. We then try to prove $\Omega^*\subseteq \Omega_0$. It suffices to prove $\|Bx\|_0=s_0$ for any $x\in\Omega^*$. If not, there exists  $x^*\in\Omega^*$ such that $\|Bx^*\|_0\neq s_0$. Clearly,  $\|Bx^*\|_0\ge s_0+1$. Then the objective function value at $x^*$ is no less than $\rho_0+\lambda(s_0+1)$ due to the definition of $\rho_0$, contradicting the fact that $x^*$ is an optimal solution of problem \eqref{eq:subPartition1}. Then we get Item (ii).
Item (iii) can be obtained similarly, we omit the details here.
\end{proof}

For convenient presentation, we define $f_i:(0, +\infty)\rightarrow \mathbb{R}$, for $i \in\mathbb{N}_{L}^0$, at $\lambda> 0$ as
\begin{equation}\label{def:fi}
f_i(\lambda):=\rho_i+\lambda s_i.
\end{equation}
The next theorem expresses the optimal function value and the optimal solution set to problem \eqref{model:l0} by $f_i$ and $\Omega_i$, $i\in\mathbb{N}_L^0$, respectively.
\begin{theorem}\label{thm:exist}
 Given $\phi$ satisfying H\ref{hypo:phi} and $B\in\mathbb{R}^{m\times n}$, let $L$ and $s_i, \rho_i, \Omega_i$ for $i\in\mathbb{N}_{L}^0$ be defined by Definition \ref{def:sRhoOmega}.
 Let  $f_i:(0, +\infty)\rightarrow\mathbb{R}$ for $i\in\mathbb{N}_L^0$ be defined by \eqref{def:fi}. Then, the following statements hold for any fixed $\lambda>0$:
 \begin{itemize}
 \item [(i)] The optimal value of problem \eqref{model:l0} is $\min\{f_i(\lambda): i \in\mathbb{N}_{L}^0\}$.
\item [(ii)] $\bigcup_{i\in\Lambda^*}\Omega_{i}$ is  the optimal solution set of problem \eqref{model:l0}, where $\Lambda^*:=\arg\min\{f_i(\lambda): i \in\mathbb{N}_{L}^0\}$.
    \end{itemize}
\end{theorem}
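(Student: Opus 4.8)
The plan is to minimize $\Phi$ piece by piece over the partition of $\mathbb{R}^n$ introduced just before Lemma~\ref{lema:partitionSolution}, namely the top piece $\{x:\|Bx\|_0\ge s_0\}$, the middle pieces $\{x: s_{i+1}\le\|Bx\|_0\le s_i-1\}$ for $i\in\mathbb{N}_{L-1}^0$, and the bottom piece $\{x:\|Bx\|_0\le s_L-1\}$. Because the integers satisfy $m\ge s_0>s_1>\dots>s_L\ge 0$ by Proposition~\ref{prop:rhoSOmega}(ii), the corresponding ranges of admissible values of $\|Bx\|_0$ tile $\{0,1,\dots,m\}$, so these $L+2$ sets indeed cover $\mathbb{R}^n$ and $\min\Phi$ equals the smallest of the restricted infima over the pieces (the infima are attained, and $\min\Phi<+\infty$, by Theorem~\ref{thm:SoulExist} together with properness of $\phi$).

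The bottom piece is disposed of first: by Proposition~\ref{prop:rhoSOmega}(vi) it has empty intersection with $\mathrm{dom}(\phi)$, so $\Phi\equiv+\infty$ there and it neither contains a minimizer nor lowers the optimal value (and it is empty outright when $s_L=0$). On each remaining piece Lemma~\ref{lema:partitionSolution} supplies everything needed: minimizing $\Phi$ over the top piece is exactly problem~\eqref{eq:subPartition1}, with optimal value $\rho_0+\lambda s_0=f_0(\lambda)$ and optimal solution set $\Omega_0$; minimizing $\Phi$ over the middle piece indexed by $i\in\mathbb{N}_{L-1}^0$ is exactly problem~\eqref{eq:subPartition2}, with optimal value $\rho_{i+1}+\lambda s_{i+1}=f_{i+1}(\lambda)$ and optimal solution set $\Omega_{i+1}$. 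As $i$ runs over $\mathbb{N}_{L-1}^0$ the index $i+1$ runs over $\{1,\dots,L\}$, so the pieces realize precisely the values $f_j(\lambda)$ and the sets $\Omega_j$ for $j\in\mathbb{N}_L^0$. Taking the minimum over all pieces gives $\min\Phi=\min\{f_j(\lambda):j\in\mathbb{N}_L^0\}$, which is Item~(i).

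For Item~(ii), the global optimal solution set is the union over the pieces of $\{x \text{ in that piece}:\Phi(x)=\min\Phi\}$. On the piece realizing $f_j(\lambda)$ this intersection is $\Omega_j$ when $f_j(\lambda)=\min\Phi$ and is empty otherwise, again by Lemma~\ref{lema:partitionSolution}; one should also note that $\Omega_j$ genuinely lies in its piece, which follows from Proposition~\ref{prop:rhoSOmega}(v) (so $\|Bx\|_0=s_j$ on $\Omega_j$) together with $s_j\le s_{j-1}-1$ from Proposition~\ref{prop:rhoSOmega}(ii). Hence the optimal solution set is $\bigcup_{j\in\Lambda^*}\Omega_j$ with $\Lambda^*=\arg\min\{f_j(\lambda):j\in\mathbb{N}_L^0\}$. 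I do not anticipate a substantive obstacle: the analytic content is entirely carried by Lemma~\ref{lema:partitionSolution} and Definition~\ref{def:sRhoOmega}, and the only points requiring care are the index shift between problem~\eqref{eq:subPartition2} and $f_{i+1},\Omega_{i+1}$, the verification that the $L+2$ pieces cover $\mathbb{R}^n$ and are compatible with the inclusions $\Omega_j\subseteq\{x:\|Bx\|_0=s_j\}$, and the harmless role of the degenerate bottom piece.
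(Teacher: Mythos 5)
Your proposal is correct and follows exactly the route the paper intends: the paper omits the proof, remarking only that the theorem is a direct consequence of the partition of $\mathbb{R}^n$ into the $L+2$ pieces, Item (vi) of Proposition \ref{prop:rhoSOmega}, and Lemma \ref{lema:partitionSolution}, which is precisely the argument you spell out (including the index shift and the check that $\Omega_j$ lies in its piece).
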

We omit the proof here since
it is a direct result of the fact that  $\mathbb{R}^n=\bigcup_{i=0}^{L-1}\{x\in\mathbb{R}^n:s_{i+1}\le\|x\|_0\le s_{i}-1\}\bigcup\{x\in\mathbb{R}^n:\|x\|_0\ge s_0\}\bigcup\{x\in\mathbb{R}^n:\|x\|_0\le s_L-1\}$, Item (vi) of proposition \ref{prop:rhoSOmega} and Lemma \ref{lema:partitionSolution}.
\subsection{Properties of $F$ and $\Omega$}\label{sec:subSta}
Based on the previous subsection, this subsection focuses on  the stability to parameter $\lambda$ for problem \eqref{model:l0}. We shall study  properties of the marginal function $F$ and the optimal solution set $\Omega$, defined by \eqref{def:F} and \eqref{def:Omega} respectively, of problem \eqref{model:l0}. We prove in this subsection that $F$ is piecewise linear, while $\Omega$ is piecewise constant. The proof in this subsection is very similar to that used in Section 3.2 in \cite{Zhang-Li:IP2016}. Therefore, we only present the main results of this subsection and one can refer to Section 3.2 and Appendix of \cite{Zhang-Li:IP2016} for detailed proofs.

According to Theorem \ref{thm:exist},
$F(\lambda)=\min\{f_i(\lambda), i \in\mathbb{N}_{L}^0\}$ with $f_i$ defined by \eqref{def:fi}. Obviously,  each $f_i$ for $i \in\mathbb{N}_{L}^0$ is a line with slop $s_i$ and intercept $\rho_i$. Therefore, by Items (ii) and (iii) of Proposition \ref{prop:rhoSOmega}, it is easy to deduce that $F$ is continuous and piecewise linear. We will utilize the following  iteration procedure to find the minimal value of $f_i$ for $i \in\mathbb{N}_{L}^0$, that is, the marginal function $F$.
\begin{definition}\label{def:lambda_i}
Given $\phi$ satisfying H\ref{hypo:phi} and $B\in\mathbb{R}^{m\times n}$, let $L$ and $s_i, \rho_i, \Omega_i$ for $i\in\mathbb{N}_{L}^0$ be defined by Definition \ref{def:sRhoOmega}. Then the integer $K$, $\{t_i\in \mathbb{N}_{L}^0:i\in\mathbb{N}_{K}^0\}, \{\lambda_i>0: i \in\mathbb{N}_{K}^0\}, \{\Lambda_i\subseteq \mathbb{N}_{L}^0: i=-1,0,\dots, K-1\}$ are defined by the following iteration
\begin{eqnarray*}
\mathrm{set~~}& i=0, &\Lambda_{-1}:=\{L\}, \\
\mathrm{while~~}&0\not\in\Lambda_{i-1} &\\
&t_{i}:=&\min \Lambda_{i-1},\\
&\lambda_i:=&\max\{\frac{\rho_{t_i}-\rho_j}{s_j-s_{t_i}}: j=0,1,\dots, t_i-1\},\\
&\Lambda_i:=&\arg\max\{\frac{\rho_{t_i}-\rho_j}{s_j-s_{t_i}}: j=0,1,\dots, t_i-1\},\\
&i=&i+1,\\
\mathrm{end~~}&&\\
&K:=i,& t_K:=0, \lambda_K:=0.\\
\end{eqnarray*}
\end{definition}

To understand the above definition, one can refer to Example 3.6 of \cite{Zhang-Li:IP2016}.
The following proposition provides some basic properties of $K$, $\{\Lambda_i\subseteq \mathbb{N}_{L}^0: i=-1,0,\dots, K-1\}$ and $t_i, \lambda_i$ for $i\in\mathbb{N}_{K}^0$ by Definition \ref{def:lambda_i}.
\begin{proposition}\label{prop:lambda_i}
Given $\phi$ satisfying H\ref{hypo:phi} and $B\in\mathbb{R}^{m\times n}$, let $L$ and $s_i, \rho_i, \Omega_i$ for $i\in\mathbb{N}_{L}^0$ be defined by Definition \ref{def:sRhoOmega}. Let  $K$, $\{\Lambda_i\subseteq \mathbb{N}_{L}^0: i=-1,0,\dots, K-1\}$ and $t_i, \lambda_i$ for $i\in\mathbb{N}_{K}^0$ be  defined by  Definition \ref{def:lambda_i}. Then the following statements hold:
\begin{itemize}
\item [(i)] $0\le K\le L$, in particular, if $L\ge 1$ then $K\ge 1$.
\item [(ii)] $L=t_0>t_1>\dots>t_K=0$.
\item [(iii)] $\lambda_0>\lambda_1>\dots>\lambda_K=0$.
\item [(iv)] $\Lambda_i\neq \emptyset$ and $\Lambda_i\cap\Lambda_j=\emptyset$, for all $i\neq j$, $i, j=-1, 0,\dots,  K-1$.
\end{itemize}
\end{proposition}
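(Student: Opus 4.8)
The plan is to view Definition \ref{def:lambda_i} as a Graham-scan type procedure that reads off the breakpoints of the lower envelope $F(\lambda)=\min\{f_i(\lambda):i\in\mathbb{N}_L^0\}$ of the affine functions $f_i(\lambda)=\rho_i+\lambda s_i$, whose slopes strictly decrease and whose intercepts strictly increase by Proposition \ref{prop:rhoSOmega}(ii)--(iii); the iteration peels these breakpoints off starting from the rightmost piece (index $L$, smallest slope) and moving toward $\lambda=0$ (index $0$, largest slope), and the overall argument parallels Section 3.2 of \cite{Zhang-Li:IP2016}. First I would check that the iteration is well posed: whenever the loop body is entered one has $0\notin\Lambda_{i-1}$, so $t_i=\min\Lambda_{i-1}\ge 1$ and the index set $\{0,1,\dots,t_i-1\}$ over which the defining maximum is taken is nonempty, while by Proposition \ref{prop:rhoSOmega}(ii) every denominator $s_j-s_{t_i}$ with $j<t_i$ is strictly positive; hence $\lambda_i$ is a finite real and $\emptyset\ne\Lambda_i\subseteq\{0,\dots,t_i-1\}\subseteq\mathbb{N}_L^0$.

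The backbone is statement (ii), and I would establish it first because (i) and (iv) then follow almost immediately. Since $\Lambda_i\subseteq\{0,\dots,t_i-1\}$ for $i\ge 0$, we get $t_{i+1}=\min\Lambda_i\le t_i-1$ whenever $i\le K-2$, while the exit condition $0\in\Lambda_{K-1}$ forces $\min\Lambda_{K-1}=0=t_K$; together with $t_0=\min\Lambda_{-1}=L$ this produces the strictly decreasing chain $L=t_0>t_1>\cdots>t_K=0$, which is (ii). Counting its terms gives $K\le L$, and if $L\ge 1$ then $0\notin\Lambda_{-1}=\{L\}$ so the loop runs at least once and $K\ge 1$; together with the trivial case $L=K=0$ this proves (i). For (iv), nonemptiness is already recorded above; for pairwise disjointness I would note that $\min\Lambda_i=t_{i+1}$ together with $\Lambda_i\subseteq\{0,\dots,t_i-1\}$ forces $\Lambda_i\subseteq\{t_{i+1},t_{i+1}+1,\dots,t_i-1\}$ for $i\in\mathbb{N}_{K-1}^0$, whereas $\Lambda_{-1}=\{t_0\}$; these containing sets are pairwise disjoint precisely because of the strict chain in (ii), hence so are the $\Lambda_i$.

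The one genuinely computational statement is (iii), the strict decrease $\lambda_0>\lambda_1>\cdots>\lambda_K=0$. Fix $i\in\{0,\dots,K-2\}$; since $t_{i+1}=\min\Lambda_i$ belongs to the argmax set $\Lambda_i$, it attains the maximum and $\lambda_i(s_{t_{i+1}}-s_{t_i})=\rho_{t_i}-\rho_{t_{i+1}}$, whereas for every $j<t_{i+1}$ one has $j\notin\Lambda_i$ and therefore the strict inequality $\rho_{t_i}-\rho_j<\lambda_i(s_j-s_{t_i})$. Subtracting the first relation from the second yields $\rho_{t_{i+1}}-\rho_j<\lambda_i(s_j-s_{t_{i+1}})$, and dividing by $s_j-s_{t_{i+1}}>0$ and maximizing over $j<t_{i+1}$ gives $\lambda_{i+1}<\lambda_i$. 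For the last link, every term of $\lambda_{K-1}=\max\{(\rho_{t_{K-1}}-\rho_j)/(s_j-s_{t_{K-1}}):j<t_{K-1}\}$ is strictly positive by Proposition \ref{prop:rhoSOmega}(ii)--(iii), so $\lambda_{K-1}>0=\lambda_K$. I expect this extraction of strictness to be the main (and essentially only) point requiring care: it works only because $t_{i+1}$ is taken to be the \emph{smallest} maximizer, which makes every index below it a non-maximizer and thereby upgrades the $\le$ coming from the definition of the argmax to the $<$ needed to propagate the strict decrease down the whole chain.
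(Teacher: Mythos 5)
Your proof is correct. The paper itself gives no proof of this proposition (it defers entirely to Section~3.2 and the Appendix of the cited Zhang--Li reference), so there is nothing in the text to compare against; your argument is the natural one the authors are implicitly relying on, and you correctly isolate the one delicate point, namely that taking $t_{i+1}=\min\Lambda_i$ makes every $j<t_{i+1}$ a strict non-maximizer, which is what turns the generic $\lambda_{i+1}\le\lambda_i$ into the strict inequality needed for (iii).
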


The main results of this subsection are presented in the following theorem.
\begin{theorem}\label{thm:stability}
Given $\phi$ satisfying H\ref{hypo:phi} and $B\in\mathbb{R}^{m\times n}$, let $L$ and $s_i, \rho_i, \Omega_i$ for $i\in\mathbb{N}_{L}^0$ be defined by Definition \ref{def:sRhoOmega}. Let  $K$, $\{\Lambda_i\subseteq \mathbb{N}_{L}^0: i=-1,0,\dots, K-1\}$ and $t_i, \lambda_i$ for $i\in\mathbb{N}_{K}^0$ be  defined by  Definition \ref{def:lambda_i}. Let $f_i$ for $i \in\mathbb{N}_{L}^0$, $F$ and $\Omega$ be defined by \eqref{def:fi}, \eqref{def:F} and \eqref{def:Omega} respectively.
\begin{itemize}
\item [(i)] If $L=0$, then $F(\lambda)=\rho_0+\lambda s_0$ and $\Omega(\lambda)=\Omega_0$ for any $\lambda>0$.
\item [(ii)] If $L\ge 1$, then
$$F(\lambda)=\begin{cases}
f_0(\lambda), &\mathrm{~~if~~}\lambda\in(0, \lambda_{K-1}],\\
f_{t_i}(\lambda),& \mathrm{~~if~~}\lambda\in (\lambda_i, \lambda_{i-1}], i\in\mathbb{N}_{K-1},\\
f_L(\lambda),& \mathrm{~~if~~}\lambda\in(\lambda_0, +\infty).
\end{cases}$$
and
 $$
\Omega(\lambda)=\begin{cases}
\Omega_0,& \mathrm{~~if~~}\lambda\in(0, \lambda_{K-1}),\\
\Omega_{t_i}, & \mathrm{~~if~~}\lambda\in (\lambda_i, \lambda_{i-1}), i\in\mathbb{N}_{K-1},\\
\Omega_L,& \mathrm{~~if~~}\lambda\in(\lambda_0, +\infty),\\
\bigcup_{k\in\Lambda_i}\Omega_k\cup\Omega_{t_i}, & \mathrm{~~if~~} \lambda=\lambda_i, i\in\mathbb{N}_{K-1}^0.
\end{cases}
$$
\item [(iii)] $F$ is continuous, piecewise linear, nondecreasing and concave.
\end{itemize}
\end{theorem}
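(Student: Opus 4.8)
The plan is to reduce everything to the finite family of affine functions $\{f_i:i\in\mathbb{N}_L^0\}$ by way of Theorem~\ref{thm:exist}, which already gives, for every $\lambda>0$,
\[
F(\lambda)=\min\{f_i(\lambda):i\in\mathbb{N}_L^0\},\qquad \Omega(\lambda)=\bigcup_{i\in\Lambda^*(\lambda)}\Omega_i,\qquad \Lambda^*(\lambda):=\arg\min\{f_i(\lambda):i\in\mathbb{N}_L^0\}.
\]
Item~(i) is then immediate, since for $L=0$ the family is the single line $f_0(\lambda)=\rho_0+\lambda s_0$ and $\Lambda^*(\lambda)=\{0\}$, hence $\Omega(\lambda)=\Omega_0$. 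Item~(iii) is also short once this reduction is in place: a pointwise minimum of finitely many affine maps is concave and piecewise linear, hence continuous on $(0,+\infty)$; and each $f_i$ has slope $s_i\ge0$ by Proposition~\ref{prop:rhoSOmega}(ii), so each $f_i$, and therefore their minimum $F$, is nondecreasing. (Alternatively, once Item~(ii) is proved, all of Item~(iii) can be read off directly from the explicit breakpoint/slope data.)

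The substance is Item~(ii), where one must identify $\Lambda^*(\lambda)$ explicitly. Geometrically, by Proposition~\ref{prop:rhoSOmega}(ii)--(iii) the lines $f_0,\dots,f_L$ have strictly decreasing slopes $s_0>\cdots>s_L$ and strictly increasing intercepts $\rho_0<\cdots<\rho_L$, so their lower envelope $F$ is concave, and the indices active on it form a subsequence running from $L$ (active as $\lambda\to+\infty$) down to $0$ (active as $\lambda\to 0^+$, since $f_0(0)=\rho_0<\rho_j=f_j(0)$ for $j\ge1$). Definition~\ref{def:lambda_i} is exactly the greedy procedure extracting this subsequence and its breakpoints from the right end. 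I would prove Item~(ii) by induction on $i=0,1,\dots,K-1$, with inductive claim: $\Lambda^*(\lambda)=\{t_i\}$ for all $\lambda\in(\lambda_i,\lambda_{i-1})$; $\Lambda^*(\lambda_i)=\{t_i\}\cup\Lambda_i$; and, as the part that feeds the next stage, $f_{t_i}(\lambda)<f_j(\lambda)$ for every index $j>t_i$ and every $\lambda<\lambda_{i-1}$, with ``$\le$'' at $\lambda=\lambda_{i-1}$. The base case $i=0$ (where $t_0=L$, and the auxiliary part is vacuous since no index exceeds $L$) must additionally record $\Lambda^*(\lambda)=\{L\}$ on $(\lambda_0,+\infty)$, and the terminal index $t_K=0$ with $\lambda_K=0$ records $\Lambda^*(\lambda)=\{0\}$ on $(0,\lambda_{K-1})$. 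Assembling these statements over $i$, translating through $F(\lambda)=f_{t_i}(\lambda)$ on the $i$-th piece and through the displayed formula for $\Omega$, and invoking Proposition~\ref{prop:lambda_i}(ii)--(iii) to see that the listed intervals tile $(0,+\infty)$, yields the two stated formulas.

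Every comparison the induction needs is elementary, being a statement about a difference of two affine functions. For $j<t_i$ the map $\lambda\mapsto f_{t_i}(\lambda)-f_j(\lambda)=(\rho_{t_i}-\rho_j)+\lambda(s_{t_i}-s_j)$ is strictly decreasing (its $\lambda$-coefficient $s_{t_i}-s_j$ is negative), is positive at $\lambda=0$, and vanishes at $\lambda_j^{*}:=(\rho_{t_i}-\rho_j)/(s_j-s_{t_i})>0$; hence $f_{t_i}\le f_j$ precisely for $\lambda\ge\lambda_j^{*}$, strictly for $\lambda>\lambda_j^{*}$. Since $\lambda_i=\max_{j<t_i}\lambda_j^{*}$ and $\Lambda_i=\arg\max_{j<t_i}\lambda_j^{*}$ by Definition~\ref{def:lambda_i}, on $(\lambda_i,+\infty)$ the line $f_{t_i}$ strictly undercuts every $f_j$ with $j<t_i$ and at $\lambda=\lambda_i$ it ties exactly the $f_j$ with $j\in\Lambda_i$. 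For $j>t_i$ the same difference is instead strictly increasing and negative at $\lambda=0$, so once $f_{t_i}(\lambda_{i-1})\le f_j(\lambda_{i-1})$ is known it propagates strictly to all $\lambda<\lambda_{i-1}$; and $f_{t_i}(\lambda_{i-1})\le f_j(\lambda_{i-1})$ holds because $t_i=\min\Lambda_{i-1}\in\Lambda_{i-1}$ forces $f_{t_i}(\lambda_{i-1})=f_{t_{i-1}}(\lambda_{i-1})$, which equals $F(\lambda_{i-1})$ by the stage-$(i-1)$ claim and is therefore $\le f_j(\lambda_{i-1})$ for every $j$ by definition of $F$. Combining these shows $f_{t_i}$ is the unique minimizer on $(\lambda_i,\lambda_{i-1})$ and that the minimizers at $\lambda_i$ are exactly $\{t_i\}\cup\Lambda_i$, closing the induction.

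The main obstacle is not any single inequality but the bookkeeping: at stage $i$ one must simultaneously control three families of indices — the current active index $t_i$, the ``skipped'' indices strictly between $t_i$ and $t_{i-1}$, and the ``already-passed'' indices larger than $t_{i-1}$ — and check that each of the last two lies strictly above the envelope on $(\lambda_i,\lambda_{i-1})$. This uses that a skipped index $j$ either lies in $\Lambda_{i-1}$ (whence $f_j(\lambda_{i-1})=F(\lambda_{i-1})$ and the difference $f_{t_i}-f_j$, strictly monotone, keeps the needed sign below $\lambda_{i-1}$) or has crossing $\lambda_j^{*}<\lambda_{i-1}$; and, for already-passed indices, the auxiliary part of the stage-$(i-1)$ claim together with $\lambda_{i-1}<\lambda_{i-2}$. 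None of this is deep, but the case analysis is long, which is why I would organize the argument along the lines of Section~3.2 of \cite{Zhang-Li:IP2016}.
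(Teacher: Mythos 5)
Your argument is correct and follows exactly the route the paper intends: reduce to the lower envelope of the finitely many lines $f_i$ via Theorem \ref{thm:exist} and Proposition \ref{prop:rhoSOmega}(ii)--(iii), then identify the active index on each interval by induction along the breakpoints produced by Definition \ref{def:lambda_i}. The paper itself omits the details and defers to Section 3.2 of \cite{Zhang-Li:IP2016}, but that reference's proof is the same envelope/induction argument you outline, so there is nothing to add.
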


A direct consequence of Theorem \ref{thm:stability} is stated below.
\begin{corollary}\label{crol:stability}
Under the assumptions of Theorem \ref{thm:stability},  the following statements hold:
\begin{itemize}
\item [(i)] If $\lambda', \lambda''\in (\lambda_0, +\infty)$ or $\lambda', \lambda''\in (\lambda_i, \lambda_{i-1})$, $i\in\mathbb{N}_K$, then $\|Bx\|_0=\|By\|_0$ and $\phi(x)=\phi(y)$ hold for any $x\in\Omega(\lambda')$ and any $y\in\Omega(\lambda'')$.
\item [(ii)] If $\lambda'<\lambda''$, then $\|Bx\|_0\ge \|By\|_0$ and $\phi(x)\le \phi(y)$ hold for any $x\in\Omega(\lambda')$ and any $y\in\Omega(\lambda'')$.
\end{itemize}
\end{corollary}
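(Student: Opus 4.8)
The plan is to treat the two parts separately: part~(ii) is a self-contained exchange argument that uses nothing but the definition of a global minimizer, whereas part~(i) is read off directly from Theorem~\ref{thm:stability} together with Proposition~\ref{prop:rhoSOmega}(v). I would present part~(ii) first, since it is completely independent of the breakpoint machinery of Definitions~\ref{def:sRhoOmega} and \ref{def:lambda_i} and in fact reproves the monotonicity built into Theorem~\ref{thm:stability} in one line.

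For part~(ii), fix $0<\lambda'<\lambda''$ and take $x\in\Omega(\lambda')$, $y\in\Omega(\lambda'')$. Because $\phi$ satisfies H\ref{hypo:phi}, the objective of problem~\eqref{model:l0} has a finite minimum for each parameter value (Theorem~\ref{thm:SoulExist}), so $\Phi$ evaluated at $x$ and at $y$ is finite; since $\|B\cdot\|_0\le m$ everywhere, it follows that $\phi(x)$ and $\phi(y)$ are finite, which is all that is needed to subtract safely below. Optimality of $x$ at $\lambda'$ and of $y$ at $\lambda''$ gives $\phi(x)+\lambda'\|Bx\|_0\le\phi(y)+\lambda'\|By\|_0$ and $\phi(y)+\lambda''\|By\|_0\le\phi(x)+\lambda''\|Bx\|_0$. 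Adding these and cancelling $\phi(x)+\phi(y)$ leaves $(\lambda''-\lambda')(\|Bx\|_0-\|By\|_0)\ge 0$, hence $\|Bx\|_0\ge\|By\|_0$ because $\lambda''>\lambda'$; feeding this back into the first inequality yields $\phi(x)-\phi(y)\le\lambda'(\|By\|_0-\|Bx\|_0)\le 0$, i.e.\ $\phi(x)\le\phi(y)$. I would emphasize that this single computation already covers the breakpoint values $\lambda=\lambda_i$, at which $\Omega(\lambda_i)$ is a union of several $\Omega_k$; no case analysis on the partition is required.

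For part~(i) I would invoke Theorem~\ref{thm:stability}(ii): on each of the open intervals $(0,\lambda_{K-1})$, $(\lambda_i,\lambda_{i-1})$ for $i\in\mathbb{N}_{K-1}$, and $(\lambda_0,+\infty)$, the map $\Omega$ is constant and equals a single set, namely $\Omega_0$, $\Omega_{t_i}$, or $\Omega_L$ respectively (with the degenerate case $L=0$ handled by Theorem~\ref{thm:stability}(i), where $\Omega\equiv\Omega_0$). Hence if $\lambda',\lambda''$ lie in one such interval, then $x\in\Omega(\lambda')$ and $y\in\Omega(\lambda'')$ belong to one common $\Omega_j$, and Proposition~\ref{prop:rhoSOmega}(v) gives $\|Bx\|_0=s_j=\|By\|_0$ and $\phi(x)=\rho_j=\phi(y)$ at once.

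There is essentially no obstacle here: the corollary is genuinely a corollary, and all the substantive work lives in Theorem~\ref{thm:stability} and Proposition~\ref{prop:rhoSOmega}. The only points worth a word of care are (a)~the finiteness of $\phi$ at optimal solutions, which legitimizes the cancellations in part~(ii) and follows from $\phi$ being proper, bounded below, and admitting minimizers; and (b)~the fact that part~(i) is stated only for $\lambda',\lambda''$ in a common \emph{open} interval, which is exactly what is needed since $\|B\cdot\|_0$ and $\phi$ need not be constant across $\Omega(\lambda_i)$ at a breakpoint.
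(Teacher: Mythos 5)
Your proof is correct. The paper itself gives no written proof of this corollary (it is labelled ``a direct consequence of Theorem \ref{thm:stability}'' and the section defers all details to \cite{Zhang-Li:IP2016}), so the intended argument is to read everything off the explicit piecewise description: for (i), exactly as you do, each open interval maps to a single $\Omega_j$ and Proposition \ref{prop:rhoSOmega}(v) pins down $\|Bx\|_0=s_j$ and $\phi(x)=\rho_j$ on it; for (ii), the intended route is to combine Theorem \ref{thm:stability}(ii) with the monotonicity $s_0>s_1>\dots>s_L$ and $\rho_0<\rho_1<\dots<\rho_L$ from Proposition \ref{prop:rhoSOmega}(ii)--(iii), observing that the active index $t_i$ increases with $\lambda$, with a small extra case analysis at the breakpoints $\lambda=\lambda_i$ where $\Omega(\lambda_i)$ is a union of several $\Omega_k$. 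Your part (ii) replaces that with the classical two-inequality exchange argument, which is more elementary, uses only the definition of a global minimizer plus the finiteness of $\phi$ at minimizers (which you correctly justify from H\ref{hypo:phi} and Theorem \ref{thm:SoulExist}), and uniformly covers the breakpoint values without touching the partition machinery. Both routes are valid; yours buys independence from Definitions \ref{def:sRhoOmega} and \ref{def:lambda_i} for part (ii), at the cost of not exhibiting the corollary as a literal consequence of the stability theorem. One cosmetic remark: in part (i) your interval list $(0,\lambda_{K-1})$, $(\lambda_i,\lambda_{i-1})$ for $i\in\mathbb{N}_{K-1}$, $(\lambda_0,+\infty)$ is the same family as the corollary's $(\lambda_i,\lambda_{i-1})$ for $i\in\mathbb{N}_K$ together with $(\lambda_0,+\infty)$, since $\lambda_K=0$; it is worth saying this explicitly so the reader sees the indexing matches.
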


From Theorem \ref{thm:stability}, the optimal value of problem \eqref{model:l0} changes piecewise linearly while the optimal solution set to problem \eqref{model:l0} changes piecewise constantly as the parameter $\lambda$ varies. In addition, by Corollary \ref{crol:stability}, the optimal values of both the first and second terms of \eqref{model:l0}  are piecewise constant with respect to changes in the parameter $\lambda$.
\section{Exact approximation to problem \eqref{model:l0}}\label{sec:exactEq}
In this section, we explore the exact approximation to problem  \eqref{model:l0} by problem \eqref{model:3}.
We establish  two cases where  problems \eqref{model:l0} and \eqref{model:3} share the same optimal solution set provided that $\gamma>\gamma^*$ for some $\gamma^*>0$.


\subsection{When $\phi$ is the indicator function on an asymptotically linear  set}\label{subsec:IndicatorEquiv}
In this subsection, we consider the case where $\phi$ is the indicator function on an asymptotically linear  set (see Definition \ref{def:asyLinSet}) and derive exact approximation results regarding optimal solution sets of problems \eqref{model:l0} and \eqref{model:3}.

Let $C\subseteq\mathbb{R}^n$ and $\phi$ be the indicator function on $C$.  Problems \eqref{model:l0} and \eqref{model:3} become
\begin{equation}\label{eq:6.1.0}
\min\{\|Bx\|_0: x\in C\},
\end{equation}
and
\begin{equation}\label{eq:6.1.1}
\min\{\psi_\gamma(Bx): x\in C\}
\end{equation}
respectively.
By \eqref{eq:6.3.1}, for any $p>0$, problem \eqref{eq:6.1.1} can be equivalently reformulated as
\begin{equation}\label{eq:6.1.2}
\min\{\|v\|_0+{\gamma}\|Bx-v\|_p^p: x\in C, v\in\mathbb{R}^m\}.
\end{equation}
The following theorem concerns optimal solutions to problems \eqref{eq:6.1.0} and \eqref{eq:6.1.1}.

\begin{theorem}\label{thm:ExaEqvInd}
Let $C\subseteq\mathbb{R}^n$ be asymptotically linear. Then there exists a $\gamma^*>0$ such that problems \eqref{eq:6.1.0} and \eqref{eq:6.1.1} share the same optimal solution set whenever $\gamma>\gamma^*$.
\end{theorem}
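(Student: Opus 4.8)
The plan is to reduce the statement to the abstract machinery already built in Sections 3--5. First I would observe that $\phi=\iota_C$ satisfies Hypothesis~\ref{hypo:phi}: $C$ asymptotically linear forces $C$ closed and nonempty, hence $\iota_C$ is proper, lower semicontinuous and bounded below, and by Remark~\ref{remark:asyLiSet} it is asymptotically level stable. Therefore Theorem~\ref{thm:SoulExist} applies and both problems \eqref{eq:6.1.0} and \eqref{eq:6.1.1} have optimal solutions for every $\gamma>0$. The key identity to exploit is the two-variable reformulation \eqref{eq:6.1.2}: a point $x$ is optimal for \eqref{eq:6.1.1} iff there is $v$ with $(x,v)$ optimal for $\min\{\|v\|_0+\gamma\|Bx-v\|_p^p:x\in C,\,v\in\mathbb{R}^m\}$, and conversely. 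I would use this to transfer the analysis from the capped-$\ell_p$ objective to the separated form, where the interplay between the $\ell_0$ part and the $\ell_p$ penalty is transparent.

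The central step is to show that for $\gamma$ large every optimal pair $(x^*,v^*)$ of \eqref{eq:6.1.2} must satisfy $Bx^*=v^*$. Suppose not: then along a sequence $\gamma^k\to\infty$ there are optimal pairs $(x^k,v^k)$ with $Bx^k\neq v^k$. Comparing with the feasible competitor $(\tilde x,\,B\tilde x)$, where $\tilde x$ is any fixed optimal solution of \eqref{eq:6.1.0}, gives
\[
\|v^k\|_0+\gamma^k\|Bx^k-v^k\|_p^p\ \le\ \|B\tilde x\|_0 = \min\eqref{eq:6.1.0}\le m,
\]
so $\|Bx^k-v^k\|_p^p\le m/\gamma^k\to 0$ while $\|v^k\|_0\le m$. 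Here I would invoke a discreteness argument: the function $x\mapsto\|Bx\|_0$ restricted to $C$ takes only finitely many values, and more precisely the quantity $\|v\|_0+\gamma\|Bx-v\|_p^p$ can be analyzed through the stability results of Section~\ref{sec:Sta}. The cleanest route is to note that \eqref{eq:6.1.2} is exactly problem \eqref{model:l0} (in the variables $(x,v)$, with data fitting term $\iota_C(x)+\iota_{\{Bx=v\}}$ relaxed) — no, more simply: fix the support pattern $S=\mathrm{supp}(v)$; then minimizing over $v$ with $\mathrm{supp}(v)\subseteq S$ and over $x\in C$ reduces to a penalty problem for the linear-constraint/indicator data term, to which Proposition~\ref{prop:leveBoun}-type reasoning and the exact-penalty behavior of Section~\ref{sec:Sta} apply. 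I expect the argument to yield a threshold $\gamma_S$ for each of the finitely many patterns $S$, and then $\gamma^*:=\max_S\gamma_S$ works.

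The main obstacle is precisely pinning down this per-pattern threshold: one must show that once $\gamma$ exceeds it, no optimal pair can "cheat" by taking $v$ close to but not equal to $Bx$. This is where the asymptotic linearity of $C$ is essential — it guarantees, via Lemma~\ref{lema:asyLiSetScec5} and Lemma~\ref{lema:L0constSolu}, that the relevant constrained subproblems $\min\{\,\dots: x\in C,\ (Bx)_i=0 \text{ for } i\notin S\,\}$ actually attain their infima, so the "gap" between being exactly on the set $\{Bx=v\}$ and being merely near it is bounded below by a positive constant depending only on $S$ and $B$ (not on $\gamma$). Once $Bx^*=v^*$ is forced, \eqref{eq:6.1.2} collapses to $\|Bx^*\|_0$ with $x^*\in C$, i.e.\ exactly problem \eqref{eq:6.1.0}, and conversely any optimal $x$ of \eqref{eq:6.1.0} gives the optimal pair $(x,Bx)$; matching the optimal values then gives equality of the optimal solution sets for $\gamma>\gamma^*$. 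I would close by checking the converse inclusion carefully — that an optimal solution of \eqref{eq:6.1.0} is optimal for \eqref{eq:6.1.1} once $\gamma>\gamma^*$ — which follows from $\psi_\gamma(Bx)\le\|Bx\|_0$ together with the value identification just established.
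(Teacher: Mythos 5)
Your strategy is viable but genuinely different from the paper's. The paper exploits $\tau\iota_C=\iota_C$ to rewrite \eqref{eq:6.1.1} as $\min\{\iota_C(x)+\|Bx-v\|_p^p+\tfrac{1}{\gamma}\|v\|_0\}$, recognizes this as an instance of \eqref{model:l0} in the variable $\tilde x=(x,v)$ with data term $\tilde\phi(\tilde x)=\iota_C(x)+\|Bx-v\|_p^p$, $\tilde B=[\mathbf{0}_{m\times n}\ I_{m\times m}]$ and $\tilde\lambda=1/\gamma$, checks that $\tilde\phi$ is asymptotically level stable, and then reads the conclusion directly off the stability result (Theorem \ref{thm:stability}): for all $\tilde\lambda\in(0,\lambda^*)$ the solution set equals $\Omega_0=\arg\min\{\|v\|_0:Bx=v,\ x\in C\}$, whose $x$-components are exactly $\arg\min$ of \eqref{eq:6.1.0}, so $\gamma^*=1/\lambda^*$. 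You instead work on the penalty formulation \eqref{eq:6.1.2} and try to force $Bx^*=v^*$ for large $\gamma$; this is essentially the technique the paper itself uses for Theorem \ref{thm:ExaEqvInd2} and for the $\ell_0$ data-fitting case in Section 6.2, and it has the advantage of yielding an explicit threshold rather than an abstract $\lambda^*$.

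However, the central step as you have written it does not close. From the competitor $(\tilde x,B\tilde x)$ the optimal value of \eqref{eq:6.1.2} is at most $k^*:=\min\{\|Bx\|_0:x\in C\}$; if $Bx^*\neq v^*$ and $\|v^*\|_0\ge k^*$ the value already exceeds $k^*$, so the only case to exclude is $\|v^*\|_0\le k^*-1$, and there the quantity that must be bounded away from zero is
$\delta:=\inf\{\|Bx-v\|_p^p:\ x\in C,\ \|v\|_0\le k^*-1\}$.
This is positive because the objective is strictly positive on that set ($\|Bx\|_0\ge k^*$ for $x\in C$) and the infimum is attained: $C\times\{v:\|v\|_0\le k^*-1\}$ is asymptotically linear by Lemmas \ref{lema:asyLiSetScec5} and \ref{lema:asyLinSetSection6}, while $(x,v)\mapsto\|Bx-v\|_p^p$ is asymptotically level stable by Proposition \ref{prop:leveBoun}, so Proposition 3.3.3 of \cite{Teboulle:asympt2003} and Theorem \ref{thm:alsExist} apply; then $\gamma^*:=k^*/\delta$ works. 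Your version instead points at subproblems of the form $\min\{\cdots:x\in C,\ (Bx)_i=0\text{ for }i\notin S\}$, which constrain the support of $Bx$ rather than of $v$; those feasible sets are typically empty for $|S|<k^*$ and do not produce the required lower bound, and Lemma \ref{lema:L0constSolu} is not the attainment statement you need. Supplying the dichotomy on $\|v^*\|_0$ and identifying the correct infimum $\delta$ is what is missing; with that, your route goes through.
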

\begin{proof}
Since $C$ is asymptotically linear, $\phi:=\iota_C$ is asymptotically level stable by Remark \ref{remark:asyLiSet}. Then problems \eqref{eq:6.1.0} and \eqref{eq:6.1.1} have optimal solutions from Theorem \ref{thm:SoulExist}.
Let $\Omega$ be the optimal solution set to problem \eqref{eq:6.1.0}, that is, $\Omega:=\arg\min\{\|Bx\|_0: x\in C\}$. It is clear that  $\tau\iota_C=\iota_C$ for any $\tau>0$. With the indicator function $\iota_C$, problem \eqref{eq:6.1.1} can be also written as
\begin{equation}\label{eq:6.1.4}
\min\{\iota_C(x)+\|Bx-v\|_p^p+\frac{1}{\gamma}\|v\|_0: x\in\mathbb{R}^n, v\in\mathbb{R}^m\}.
\end{equation}
Therefore, it suffices to prove that there exists a $\gamma^*>0$ such that
\begin{equation}\label{eq:6.1.5}
\Omega=\{x: (x, v)\in\tilde\Omega\} \mathrm{~for~} \gamma>\gamma^*,
 \end{equation}
 where $\tilde\Omega$ is the optimal solution set to \eqref{eq:6.1.4}.

In fact, problem \eqref{eq:6.1.4} can be cast into problem \eqref{model:l0} by setting
$$\tilde x:=(x, v), ~~\tilde \phi(\tilde x):=\iota_C(x)+\|Bx-v\|_p^p,~~\tilde B:=[\mathbf{0}_{m\times n}~~ I_{m\times m}]~~\mathrm{ and}~~\tilde \lambda:=\frac{1}{\gamma}.$$
Here, $\tilde \phi$ is asymptotically level stable according to Propositions \ref{prop:leveBoun}, Item (c) of Proposition 3.3.3 in \cite{Teboulle:asympt2003} and the asymptotic linearity of $C$. Therefore, Definitions \ref{def:sRhoOmega}, \ref{def:lambda_i} and  Theorem   \ref{thm:stability} can be applied to the problem
\begin{equation}\label{eq:16}
\min\{\tilde \phi(\tilde x)+\tilde\lambda\|\tilde B\tilde x\|_0: \tilde x\in\mathbb{R}^n\times\mathbb{R}^m\}.
 \end{equation}Given $\tilde \phi$ and $\tilde B\in\mathbb{R}^{m\times{(m+n)}}$, let $\rho_0, \Omega_0$  be defined by Definition \ref{def:sRhoOmega}.
  By Theorem   \ref{thm:stability}, there exits $\lambda^*>0$ such that  the optimal solution set of problem \eqref{eq:16} is $\Omega_0$  as $0<\tilde\lambda<\lambda^*$. From Definition \ref{def:sRhoOmega},
$$\rho_0=\min\{ \tilde \phi(\tilde x): \tilde x\in\mathbb{R}^n\times \mathbb{R}^{m}\}=\min\{\|Bx-v\|_p^p: x\in C, v\in\mathbb{R}^m\}.$$
Thus, we have $\rho_0=0$ and $\{\tilde x: \tilde \phi(\tilde x)=\rho_0\}=\{(x, v): Bx=v, x\in C\}$. Then $$\Omega_0=\arg\min\{\|v\|_0: Bx=v, x\in C\}=\arg\min\{\|Bx\|_0: Bx=v, x\in C\}.$$
 Therefore, we have \begin{equation}\label{eq:6.1.7}
 \tilde \Omega=\Omega_0 \mathrm{~if~} 0<\lambda=1/\gamma<\lambda^*.
  \end{equation}  It is Obvious that \begin{equation}\label{eq:6.1.6}
\Omega=\{x: (x, v)\in\Omega_0\}.
 \end{equation}
  By setting $\gamma^*=1/\lambda^*$, \eqref{eq:6.1.5} follows from \eqref{eq:6.1.7} and \eqref{eq:6.1.6}.
  We immediately obtain this theorem.
 \end{proof}

In the following lemma, we present a class of asymptotically linear sets encountered in applications frequently.
\begin{lemma}\label{lema:asyLinSetInpainting}
Let $A\in\mathbb{R}^{t\times n}$ and $C'\subset\mathbb{R}^t$  be a closed and bounded nonempty set. Then $C:=\{x\in\mathbb{R}^n: Ax\in C'\}$ is either empty or asymptotically linear.
\end{lemma}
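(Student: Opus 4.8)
The plan is to reduce the claim to Lemma \ref{lema:asyLiSetScec5} together with the elementary fact, already used implicitly in the excerpt, that a finite union of asymptotically linear sets is asymptotically linear. First I would dispose of the trivial case $C=\emptyset$. So assume $C\neq\emptyset$, and let $\rho>0$ and $\{x^k:k\in\mathbb{N}\}$ be any sequence with $x^k\in C$, $\|x^k\|_2\to+\infty$ and $x^k\|x^k\|_2^{-1}\to\bar x$. I need to produce $k_0$ with $x^k-\rho\bar x\in C$ for all $k\ge k_0$, i.e. $A(x^k-\rho\bar x)\in C'$ for large $k$.

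The key geometric observation is that, since $C'$ is bounded, the set $\{Ax^k:k\in\mathbb{N}\}$ is bounded, and therefore $\bar x\in\mathrm{ker}(A)$: writing $x^k=u^k+w^k$ with $u^k\in\mathrm{ker}(A)^\perp$ and $w^k\in\mathrm{ker}(A)$, boundedness of $\{Ax^k\}$ forces $\{u^k\}$ bounded, so $w^k\|x^k\|_2^{-1}=x^k\|x^k\|_2^{-1}-u^k\|x^k\|_2^{-1}\to\bar x$, and since each $w^k\|x^k\|_2^{-1}\in\mathrm{ker}(A)$ and $\mathrm{ker}(A)$ is closed, $\bar x\in\mathrm{ker}(A)$. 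This is exactly the argument used in the proof of Proposition \ref{prop:leveBoun}, so I would simply invoke or replicate it. Consequently $A(x^k-\rho\bar x)=Ax^k\in C'$ for \emph{every} $k$, so we may take $k_0=1$. This already finishes the proof; there is no genuine obstacle, and in particular Lemma \ref{lema:asyLiSetScec5} is not needed for this version of the statement.

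If instead one wanted a proof that stays closer to the combinatorial style of Lemma \ref{lema:asyLiSetScec5} (covering $C'$ by products of intervals or by half-spaces), the plan would be to approximate $C'$ from outside by a bounded polytope $P\supseteq C'$, write $\{x:Ax\in P\}$ as a finite intersection/union of sets of the form $\{x:\langle a_i,x\rangle\le \beta_i\}$, and note that boundedness of $P$ again forces $A\bar x=\mathbf 0$; but this route is strictly more complicated than the direct kernel argument above, so I would not pursue it.

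The only point requiring a word of care is that $C'$ is merely assumed closed and bounded, not convex or polyhedral, so one cannot literally reuse Lemma \ref{lema:asyLiSetScec5} (which is about sublevel sets of $\|A\cdot-b\|_0$); the honest and shortest path is the boundedness/closed-kernel argument, which uses nothing about the shape of $C'$ beyond boundedness (for $\bar x\in\mathrm{ker}(A)$) and the trivial inclusion $Ax^k\in C'$. Thus the ``hard part,'' such as it is, is simply recognizing that boundedness of $C'$ is exactly what collapses the asymptotic direction into $\mathrm{ker}(A)$ and makes the shift by $\rho\bar x$ invisible to $A$.
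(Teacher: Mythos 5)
Your proposal is correct and follows essentially the same route as the paper: reduce the claim to showing $\bar x\in\mathrm{ker}(A)$, which follows from the boundedness of $C'$ (hence of $\{Ax^k\}$) together with $\|x^k\|_2\to+\infty$, after which $A(x^k-\rho\bar x)=Ax^k\in C'$ for every $k$. The only cosmetic difference is that the paper deduces $A\bar x=\mathbf{0}_t$ directly from $A\bar x=\lim_k Ax^k/\|x^k\|_2$, whereas you route through the orthogonal decomposition $x^k=u^k+w^k$; both are the same argument.
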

\begin{proof}
Suppose $C\neq\emptyset$. It is obvious that $C$ is nonempty and closed. Let $\rho>0$ and $\{x^k: k\in\mathbb{N}\}$ satisfy \eqref{eq:5.1}. By Definition \ref{def:asyLinSet}, we require to prove there exits $k_0\in\mathbb{N}$ such that for all $k\ge k_0$, $x^k-\rho \bar x\in C$, that is, $A(x^k-\rho\bar x)\in C'$. It suffices to prove $\bar x\in \mathrm{ker}(A)$.

Since $Ax^k\in C'$, $\frac{x^k}{\|x^k\|_2}\rightarrow\bar x$, $\|x^k\|_2\rightarrow+\infty$ and $C'$ is bounded, one can deduce that $A\bar x=\mathbf{0}_t$. Immediately, we get this lemma.
\end{proof}

Below we give an concrete example of problems \eqref{eq:6.1.0} and \eqref{eq:6.1.2} where the set $C$ is asymptotically linear.

\begin{example}
Let $A\in\mathbb{R}^{t\times n}$, $b\in\mathbb{R}^t$, $B\in\mathbb{R}^{m\times n}$, $p, q>0$ and $\epsilon\ge 0$. Let $\Omega$ and $\tilde \Omega$ be defined by
$$
\Omega:=\arg\min\{\|Bx\|_0: \|Ax-b\|_q\le \epsilon, x\in\mathbb{R}^n\}
$$
and
$$
\tilde\Omega:=\arg\min\{\|v\|_0+\gamma\|Bx-v\|_p^p: \|Ax-b\|_q\le \epsilon, x\in\mathbb{R}^n, v\in\mathbb{R}^m \}.
$$
By Lemma \ref{lema:asyLinSetInpainting} and Theorem \ref{thm:ExaEqvInd}, there exists a $\gamma^*>0$ such that $\Omega=\{x: (x, v)\in\tilde\Omega\}$ whenever $\gamma>\gamma^*$.
\end{example}

We can further give the exact expression of $\gamma^*$ in the previous theorem when $C$ is  compact. We present this result in the next theorem.

\begin{theorem}\label{thm:ExaEqvInd2}
Let $p>0$, $\mathbf{0}_{m\times n}\neq B\in\mathbb{R}^{m\times n}$. Let $C\subset\mathbb{R}^n$ be a closed and bounded nonempty set. Let $\Omega^*$ and $k^*\in\mathbb{N}$ be the optimal solution set and the optimal function value of problem \eqref{eq:6.1.0} respectively.    Set $\tau:=\inf\{\mathrm{the~~} k^*\mathrm{-th~~ largest ~~value~~of~~}\{|(Bx)_i|: i\in\mathbb{N}_m\}: x\in C\}$. Then the following statements hold:
\begin{itemize}
\item [(i)] $\tau>0$.
\item [(ii)]Problems \eqref{eq:6.1.0} and \eqref{eq:6.1.1} share the same optimal solution set whenever $\gamma>\gamma^*$, where $\gamma^*:=1/\tau^p$
    \end{itemize}
\end{theorem}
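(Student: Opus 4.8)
The plan is to establish (i) and (ii) in sequence, with (i) supplying the strict positivity that makes the explicit threshold $\gamma^*=1/\tau^p$ meaningful. The central object is the map $g:\mathbb{R}^n\to\mathbb{R}$ sending $x$ to the $k^*$-th largest of the numbers $|(Bx)_1|,\dots,|(Bx)_m|$, which is well defined since $1\le k^*\le m$ (here $k^*\ge1$ because $k^*\in\mathbb{N}$, and $k^*\le m$ because $\|Bx\|_0\le m$ for all $x$). I would first record two facts: $g$ is continuous, since $g(x)=\max_{S\subseteq\mathbb{N}_m,\,|S|=k^*}\min_{i\in S}|(Bx)_i|$ is a finite max of finite minima of continuous functions; and for every $x\in C$ one has $\|Bx\|_0\ge k^*$, so at least $k^*$ of the $|(Bx)_i|$ are strictly positive and hence $g(x)>0$. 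Part (i) then follows at once: $C$ is nonempty and compact, so $\tau=\inf_{x\in C}g(x)$ is attained at some $x_0\in C$, and $\tau=g(x_0)>0$.

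For part (ii), fix $\gamma>\gamma^*=1/\tau^p$, equivalently $\gamma^{-1/p}<\tau$, and recall from \eqref{def:varphi} that $\varphi_\gamma(t)=1$ whenever $|t|\ge\gamma^{-1/p}$ and $\varphi_\gamma(t)=\gamma|t|^p\in[0,1)$ otherwise, so $0\le\varphi_\gamma(t)\le1$ with $\varphi_\gamma(t)=0$ iff $t=0$. For an arbitrary $x\in C$, I would split the index set as $A_1:=\{i\in\mathbb{N}_m:|(Bx)_i|\ge\gamma^{-1/p}\}$, so that $\psi_\gamma(Bx)=|A_1|+\gamma\sum_{i\notin A_1}|(Bx)_i|^p$; since $g(x)\ge\tau>\gamma^{-1/p}$, at least $k^*$ of the indices lie in $A_1$, giving $|A_1|\ge k^*$ and in particular $\psi_\gamma(Bx)\ge k^*$. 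Consequently $\psi_\gamma(Bx)=k^*$ forces both $|A_1|=k^*$ and $(Bx)_i=0$ for every $i\notin A_1$, i.e.\ $\|Bx\|_0=|A_1|=k^*$, i.e.\ $x\in\Omega^*$. Conversely, if $x\in\Omega^*$ then $\|Bx\|_0=k^*$, so $Bx$ has exactly $k^*$ nonzero entries and the smallest of their moduli equals $g(x)\ge\tau>\gamma^{-1/p}$; thus every nonzero entry contributes $1$ to $\psi_\gamma(Bx)$ and every zero entry contributes $0$, whence $\psi_\gamma(Bx)=k^*$. Since $\Omega^*\ne\emptyset$ (the objective of \eqref{eq:6.1.0} takes finitely many values on the nonempty set $C$), these computations show $\min\{\psi_\gamma(Bx):x\in C\}=k^*$ with minimizer set exactly $\Omega^*$, which is the assertion of (ii).

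I expect part (i) to be the only delicate point. Compactness of $C$ is genuinely needed: for a merely closed, unbounded $C$ a minimizing sequence for $g$ could push the $k^*$-th largest coordinate of $Bx$ to $0$ and leave $\tau=0$, so the proof must combine the continuity of the order-statistic map $g$ with the Weierstrass extreme value theorem on the compact set $C$. Once $\tau>0$ is secured, part (ii) reduces to a purely combinatorial comparison of the contributions $\varphi_\gamma((Bx)_i)$ against the threshold $\gamma^{-1/p}<\tau$, and no further analytic input is required.
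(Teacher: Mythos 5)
Your proof is correct and follows essentially the same route as the paper: part (i) is identical (continuity of the $k^*$-th order statistic plus Weierstrass on the compact set $C$), and part (ii) rests on the same observation that every index with $|(Bx)_i|$ at or above the cap threshold contributes exactly $1$ to $\psi_\gamma(Bx)$, of which there are at least $k^*$. The only difference is cosmetic: where the paper handles the reverse inclusion (minimizers of \eqref{eq:6.1.1} lie in $\Omega^*$) by a two-case contradiction argument, you read it off directly from the equality case of the decomposition $\psi_\gamma(Bx)=|A_1|+\gamma\sum_{i\notin A_1}|(Bx)_i|^p$, which is a slightly cleaner presentation of the same idea.
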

\begin{proof}
We first prove Item (i).  Let $g, g_i: C \rightarrow\mathbb{R}$ for $i\in\mathbb{N}_m$ defined at $x\in C$ as  $g_i(x):=|(Bx)_i|$  and $g(x):=\mathrm{the~~} k^*\mathrm{-th~~largest~~value~~of ~~}\{g_i(x): i\in\mathbb{N}_m\}$. Obviously, $\tau=\inf_C g$. Since $g_i$ is continuous for any $i\in\mathbb{N}_m$, $g$ is proper and continuous. Moreover, $g(x)>0$ for any $x\in C$ because $k^*$ is the optimal function value of problem \eqref{eq:6.1.0}. Then problem
$$
\min\{g(x): x\in C\}
$$
has at least one optimal solution since $g$ is continuous and $C$ is closed and bounded. Therefore, $\tau=\inf_C g$ implies $\tau>0$.

In order to prove Item (ii), we first show $\psi_\gamma(Bx)\ge k^*$ for any $x\in C$ whenever $\gamma>\gamma^*$.
For any $x\in C$, set $\Lambda':=\{j\in\mathbb{N}_m: g_j(x)\ge g(x)\}$.  Clearly, $|\Lambda'|\ge k^*$ by the definition of  $g$. Then, for $i\in\Lambda'$,
$\gamma|(Bx)_i|^p=\gamma g_i^p(x)\ge g^p(x)/\tau^p\ge 1$. Thus, $\varphi_\gamma((Bx)_i)=1$ for $i\in\Lambda', \gamma>\gamma^*$.  Therefore,
 $\psi_\gamma(Bx)\ge \sum_{i\in\Lambda'}\varphi((Bx)_i)=|\Lambda'| \ge k^*$ for any $x\in C$.

We then prove
 any $x^*\in\Omega^*$ is an optimal solution to problem \eqref{eq:6.1.1} whenever $\gamma>\gamma^*$. It suffices to prove $\psi_\gamma(Bx^*)=k^*$. Let $\Lambda:=\mathrm{supp}(Bx^*)$. Thus $|\Lambda|=k^*$. Then, for any $\gamma>\gamma^*$,  $\gamma|(Bx^*)_i|^p>g_i(x^*)^p/\tau^p\ge g(x^*)/\tau^p \ge 1$ for $i\in\Lambda$.  Therefore, as $\gamma>\gamma^*$, $\varphi_\gamma((Bx^*)_i)=1$ for $i\in\Lambda$ and $\varphi_\gamma((Bx^*)_i)=0$ for $i\in\Lambda^C$. Thus $\psi_\gamma(Bx^*)=k^*$ whenever $\gamma>\gamma^*$. Then, the optimal function value of problem \eqref{eq:6.1.1} is $k^*$ and $x^*$ is an optimal solution to problem \eqref{eq:6.1.1}.

We finally show any optimal solution $\hat x$ of problem \eqref{eq:6.1.1} with $\gamma>\gamma^*$ is an optimal solution of problem \eqref{eq:6.1.0}. It suffices to prove $\|B\hat x\|_0=k^*$.
Since the optimal function value of \eqref{eq:6.1.1} is $k^*$, $\psi_\gamma(B\hat x)=k^*$.
We then prove $\|B\hat x\|_0=k^*$ by contradiction. Since $k^*$ is the optimal function value of problem \eqref{eq:6.1.0}, $\|B\hat x\|_0\ge k^*$. Thus $\|B\hat x\|_0\neq k^*$ implies $\|B\hat x\|_0>k^*$.  Suppose $\|B\hat x\|_0>k^*$. Set $\Lambda':=\{j\in\mathbb{N}_m: g_j(\hat x)\ge g(\hat x)\}$
and $\Lambda'':=\mathrm{supp}(B\hat x)$. It is obvious that $|\Lambda'|\ge k^*$, $|\Lambda''|=\|B\hat x\|_0>k^*$.  If $|\Lambda'|>k^*$, then by the previous analysis $\psi_\gamma(B\hat x)\ge |\Lambda'|>k^*$, which contradicts that the optimal function value of problem \eqref{eq:6.1.1} is $k^*$. Else if $|\Lambda'|=k^*$, then $\psi_\gamma(B\hat x)=k^*+\sum_{i\in\Lambda''\backslash \Lambda'}\varphi_\gamma((B\hat x)_i) >k^*$.  This also contradicts that the optimal function value of problem \eqref{eq:6.1.1} is $k^*$. Therefore, $\|B\hat x\|_0=k^*$. Item (ii) follows immediately.
\end{proof}

\subsection{When $\phi$ is the $\ell_0$ function composed with an affine mapping}
We study in this subsection the case where $\phi$ is the $\ell_0$ function composed with an affine mapping. We show that in this case for any $\lambda>0$, there exists a $\gamma^*\ge 0$ such that both problems \eqref{model:l0} and \eqref{model:3} share the same optimal solution set provided that $\gamma>\gamma^*$.

Let $\phi$  at $x\in\mathbb{R}^n$ be defined as $\phi(x):=\|Ax-b\|_0$,  where $A\in\mathbb{R}^{t\times n}$ and $b\in\mathbb{R}^t$.  In this case, problems \eqref{model:l0} and \eqref{model:3} become
\begin{equation}\label{eq:6.2.1}
\min\{\|Ax-b\|_0+\lambda\|Bx\|_0: x\in\mathbb{R}^n\}
\end{equation}
 and
\begin{equation}\label{eq:6.2.2}
\min\{\|Ax-b\|_0+\lambda\psi_\gamma(Bx): x\in\mathbb{R}^n\}
\end{equation}
 respectively.
From Proposition \ref{prop:l0ALS} and Theorem \ref{thm:SoulExist}, both problems \eqref{model:l0} and \eqref{model:3} have optimal solutions. According to problem \eqref{eq:6.3.1}, for any $p>0$, problem \eqref{eq:6.2.2} can be equivalently rewritten as
\begin{equation}\label{eq:6.2.3}
\min\{\|Ax-b\|_0+\lambda\|v\|_0+\lambda\gamma\|Bx-v\|_p^p: x\in\mathbb{R}^n, v\in\mathbb{R}^m\}.
\end{equation}

We first present below two lemmas needed for the proof of our main result in this subsection.
\begin{lemma}\label{lema:asyLinSetSection6}
Let $C_1\subseteq\mathbb{R}^n$ and $C_2\subseteq\mathbb{R}^m$ be asymptotically linear. Then, $C_1\times C_2$ is also asymptotically linear.
\end{lemma}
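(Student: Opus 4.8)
The plan is to verify the two defining properties of an asymptotically linear set (Definition~\ref{def:asyLinSet}) for the product $C_1\times C_2$, namely that it is nonempty and closed, and that it satisfies the asymptotic recession condition. Nonemptiness and closedness are immediate: a Cartesian product of nonempty closed sets is nonempty and closed in the product topology, which here coincides with the Euclidean topology on $\mathbb{R}^n\times\mathbb{R}^m\cong\mathbb{R}^{n+m}$. So the substance of the proof lies entirely in the recession condition.

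For the recession condition, I would fix $\rho>0$ and take an arbitrary sequence $\{z^k:k\in\mathbb{N}\}$ in $C_1\times C_2$ with $\|z^k\|_2\to+\infty$ and $z^k\|z^k\|_2^{-1}\to\bar z$ for some $\bar z\in\mathbb{R}^{n+m}$. Write $z^k=(x^k,y^k)$ with $x^k\in C_1$, $y^k\in C_2$, and $\bar z=(\bar x,\bar y)$. The goal is to produce $k_0$ with $(x^k-\rho\bar x,\, y^k-\rho\bar y)\in C_1\times C_2$ for all $k\ge k_0$, i.e.\ $x^k-\rho\bar x\in C_1$ and $y^k-\rho\bar y\in C_2$ eventually. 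The natural idea is to apply the asymptotic linearity of $C_1$ to $\{x^k\}$ and of $C_2$ to $\{y^k\}$ separately. The one subtlety is that to invoke Definition~\ref{def:asyLinSet} for $C_1$ one needs $\|x^k\|_2\to+\infty$ and $x^k\|x^k\|_2^{-1}$ convergent, and likewise for $C_2$; but along a given $k$ it may happen that one of the two coordinate blocks stays bounded while the other blows up, and $x^k\|x^k\|_2^{-1}$ need not converge even though $x^k\|z^k\|_2^{-1}\to\bar x$.

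To handle this cleanly I would pass to subsequences. Since $\{z^k\}$ is unbounded, at least one of $\{x^k\}$, $\{y^k\}$ is unbounded. Consider any subsequence; along it either $\{x^k\}$ is bounded or not. If $\{x^k\}$ is bounded along the subsequence, then $x^k\|z^k\|_2^{-1}\to\mathbf{0}_n$, so $\bar x=\mathbf{0}_n$ and hence $x^k-\rho\bar x=x^k\in C_1$ trivially for all $k$ in that subsequence. If $\{x^k\}$ is unbounded along the subsequence, pass to a further subsequence with $\|x^k\|_2\to+\infty$; then $x^k\|x^k\|_2^{-1}=\big(x^k\|z^k\|_2^{-1}\big)\big(\|z^k\|_2/\|x^k\|_2\big)$, and since the first factor converges to $\bar x\neq\mathbf{0}_n$ (it cannot be zero, as $\|x^k\|_2^{-1}\|z^k\|_2$ would then be forced to have a subsequential limit making the product vanish while $x^k\|x^k\|_2^{-1}$ has unit norm), the ratio $\|z^k\|_2/\|x^k\|_2$ converges to $1/\|\bar x\|_2$, so $x^k\|x^k\|_2^{-1}\to\bar x/\|\bar x\|_2$. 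Applying asymptotic linearity of $C_1$ with parameter $\rho\|\bar x\|_2>0$ and recession direction $\bar x/\|\bar x\|_2$ yields $x^k-\rho\|\bar x\|_2\cdot(\bar x/\|\bar x\|_2)=x^k-\rho\bar x\in C_1$ eventually along that subsequence. The same dichotomy applied to $\{y^k\}$ gives $y^k-\rho\bar y\in C_2$ eventually. A standard subsequence-of-subsequence argument then shows the eventual membership holds along the full original sequence: if it failed, one could extract a subsequence along which $(x^k-\rho\bar x,y^k-\rho\bar y)\notin C_1\times C_2$ for all $k$, contradicting what was just proved.

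The main obstacle, as indicated, is the bookkeeping around normalization: the hypothesis gives convergence of $z^k/\|z^k\|_2$, but asymptotic linearity of each factor is phrased in terms of the factor's own norm, and one must carefully show that $\bar x\neq\mathbf 0$ forces $\|x^k\|_2\to+\infty$ with a convergent normalized direction (and symmetrically for $y$), while $\bar x=\mathbf 0$ makes the translation in the first block vacuous. Once this normalization issue is dispatched via the subsequence argument above, the rest is routine. I would also note that this is essentially a special case of the fact that finite products of asymptotically linear sets are asymptotically linear, and that Definition~\ref{def:asyLinSet} only requires one to find \emph{some} $k_0$, so working with subsequences and patching is legitimate.
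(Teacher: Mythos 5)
Your overall strategy (reduce to asymptotic linearity of each factor, handling the renormalization from $\|z^k\|_2$ to $\|x^k\|_2$) is the same as the paper's, and you are right to flag explicitly that one must invoke Definition \ref{def:asyLinSet} for $C_1$ with parameter $\rho\|\bar x\|_2$ and direction $\bar x/\|\bar x\|_2$ — a point the paper glosses over. However, there is a genuine error in your case split. You base the dichotomy on whether $\{x^k\}$ is bounded, and in the unbounded subcase you assert that the first factor $x^k\|z^k\|_2^{-1}$ converges to $\bar x\neq\mathbf{0}_n$, justified by the parenthetical that $\|z^k\|_2/\|x^k\|_2$ ``would be forced to have a subsequential limit making the product vanish.'' This is false: take $x^k=(\sqrt{k},0,\dots)$ and $y^k=(k,0,\dots)$. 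Then $\|x^k\|_2\to+\infty$, yet $x^k\|z^k\|_2^{-1}\to\mathbf{0}_n$, so $\bar x=\mathbf{0}_n$, while $\|z^k\|_2/\|x^k\|_2\to+\infty$ and the product $x^k\|x^k\|_2^{-1}$ still has unit norm (the indeterminate form $0\cdot\infty$ does not vanish). So your argument breaks down exactly in the case ``$\{x^k\}$ unbounded but $\bar x=\mathbf{0}_n$'': there is no direction $\bar x/\|\bar x\|_2$ to feed into the asymptotic linearity of $C_1$.

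The repair is easy and is precisely what the paper does: make the dichotomy on $\bar x=\mathbf{0}_n$ versus $\bar x\neq\mathbf{0}_n$ (and likewise for $\bar y$), not on boundedness. If $\bar x=\mathbf{0}_n$ the translation in the first block is vacuous and $x^k-\rho\bar x=x^k\in C_1$ for every $k$, regardless of whether $\{x^k\}$ is bounded. If $\bar x\neq\mathbf{0}_n$, then $\|x^k\|_2/\|z^k\|_2\to\|\bar x\|_2>0$ forces $\|x^k\|_2\to+\infty$ and $x^k\|x^k\|_2^{-1}\to\bar x/\|\bar x\|_2$ along the \emph{whole} sequence, so asymptotic linearity of $C_1$ applies directly; the case $\bar x=\bar y=\mathbf{0}$ cannot occur since $(\bar x,\bar y)$ has unit norm. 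With this corrected dichotomy your subsequence-of-subsequences patching becomes unnecessary: one simply takes the maximum of the two thresholds $k_0$ produced for the two blocks.
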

\begin{proof}
By the definition of asymptotically linear sets, we require to prove that for each $\rho>0$ and each sequence $\{(x^k, y^k): k\in\mathbb{N}\}$ satisfying
\begin{equation}\label{eq:6.2.4}
(x^k, y^k)\in C_1\times C_2, \|(x^k, y^k)\|_2\rightarrow+\infty, \frac{(x^k, y^k)}{\|(x^k, y^k)\|_2}\rightarrow (\bar x, \bar y),
\end{equation}
there exists $k_0\in\mathbb{N}$ such that
\begin{equation}\label{eq:6.2.5}
(x^k, y^k)-\rho(\bar x, \bar y)\in C_1\times C_2
\end{equation}
 for any $k\ge k_0$.
There are three different cases. The first case is that $\bar x\neq \mathbf{0}_n$ while $\bar y= \mathbf{0}_m$. The second case is that $\bar x= \mathbf{0}_n$  while $\bar y\neq \mathbf{0}_m$. The last case is that $\bar x\neq \mathbf{0}_n$ and $\bar y\neq \mathbf{0}_m$. We next discuss the problem case by case.

For the first case, $\frac{\|y^k\|_2}{\|x^k\|_2}\rightarrow0$ due to $\bar y=\mathbf{0}_m$ and $\frac{(x^k, y^k)}{\|(x^k, y^k)\|_2}\rightarrow (\bar x, \bar y)$. Then $\frac{\|(x^k, y^k)\|_2}{\|x^k\|_2}\rightarrow 1$, therefore, $\frac{x^k}{\|x^k\|_2}\rightarrow\bar x$. Since $C_1$ is asymptotically linear, there exists $k_0\in\mathbb{N}$ such that $x^k-\rho \bar x\in C_1$ for any $k\ge k_0$. Then \eqref{eq:6.2.5} follows  due to $\bar y=\mathbf{0}_m$. By similar method, \eqref{eq:6.2.5} can be obtained for the second case. We finally consider the last case. In this case, $\frac{\|(x^k, y^k)\|_2}{\|x^k\|_2}\rightarrow\frac{1}{\|\bar x\|_2}$ and $\frac{\|(x^k, y^k)\|_2}{\|y^k\|_2}\rightarrow\frac{1}{\|\bar y\|_2}$.  Then \eqref{eq:6.2.4} implies $\frac{x^k}{\|x^k\|_2}\rightarrow \frac{\bar x}{\|\bar x\|_2}$
and $\frac{y^k}{\|y^k\|_2}\rightarrow \frac{\bar y}{\|\bar y\|_2}$.
Because $C_1$ and $C_2$ are asymptotically linear, there exists $k_0\in\mathbb{N}$ such that for any $k\ge k_0$, there holds
\begin{equation}\label{eq:6.2.6}
x^k-\rho \bar x\in C_1, y^k-\rho \bar y\in C_2.
\end{equation}
Then \eqref{eq:6.2.6} implies \eqref{eq:6.2.5} immediately. We complete the proof.
\end{proof}
\begin{lemma}\label{lema:asyLineSetSec6.2}
Let $A\in\mathbb{R}^{t\times n}$, $b\in\mathbb{R}^t$, $\lambda>0$ and $s\ge 0$. Set $O:=\{(x, v)\in\mathbb{R}^n\times\mathbb{R}^m: \|Ax-b\|_0+\lambda\|v\|_0\le s\}$. Then  the set $O$ is either empty or  asymptotically linear.
\end{lemma}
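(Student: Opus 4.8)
The plan is to write $O$ as a \emph{finite} union of Cartesian products of $\ell_0$-sublevel sets and then invoke the two structural facts already at hand: that the Cartesian product of asymptotically linear sets is asymptotically linear (Lemma~\ref{lema:asyLinSetSection6}), and that a finite union of asymptotically linear sets is asymptotically linear (the remark used in the proof of Lemma~\ref{lema:asyLiSetScec5}).

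First I would record that $\|Ax-b\|_0\in\mathbb{N}_t^0$ and $\|v\|_0\in\mathbb{N}_m^0$, so a pair $(x,v)$ lies in $O$ precisely when $(\|Ax-b\|_0,\|v\|_0)$ belongs to the finite index set $\mathcal{P}:=\{(j,k)\in\mathbb{N}_t^0\times\mathbb{N}_m^0:\ j+\lambda k\le s\}$, which is nonempty since $s\ge 0$ forces $(0,0)\in\mathcal{P}$. From this I would verify the set identity
\[
O=\bigcup_{(j,k)\in\mathcal{P}}\Big(\{x\in\mathbb{R}^n:\|Ax-b\|_0\le j\}\times\{v\in\mathbb{R}^m:\|v\|_0\le k\}\Big),
\]
where ``$\subseteq$'' is immediate and ``$\supseteq$'' holds because membership in the $(j,k)$-th product forces $\|Ax-b\|_0+\lambda\|v\|_0\le j+\lambda k\le s$.

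Next, for each $(j,k)\in\mathcal{P}$ I would observe that $\{x:\|Ax-b\|_0\le j\}$ is empty or asymptotically linear by Lemma~\ref{lema:asyLiSetScec5}, and that $\{v:\|v\|_0\le k\}$ is asymptotically linear by the same lemma applied with $I_m$ in place of $A$ and $\mathbf{0}_m$ in place of $b$ (it is nonempty, containing $\mathbf{0}_m$, and $k\in\mathbb{N}_m^0$). After discarding the indices $(j,k)$ for which the first factor is empty, each surviving term is a product of two nonempty asymptotically linear sets, hence asymptotically linear (in particular closed) by Lemma~\ref{lema:asyLinSetSection6}. Finally, if $O\neq\emptyset$ then at least one such term survives, so $O$ is a finite union of asymptotically linear sets and is therefore asymptotically linear; otherwise $O=\emptyset$, and the stated dichotomy follows. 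The argument is essentially bookkeeping, so I do not expect a genuine obstacle; the single point that must be handled carefully is the pruning of the indices whose first factor is empty, since Lemma~\ref{lema:asyLinSetSection6} applies only to nonempty asymptotically linear sets.
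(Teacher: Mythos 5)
Your proof is correct and follows essentially the same route as the paper: decompose $O$ as a finite union of products $\{x:\|Ax-b\|_0\le j\}\times\{v:\|v\|_0\le k\}$ over the index pairs with $j+\lambda k\le s$, then apply Lemma~\ref{lema:asyLiSetScec5} to each factor, Lemma~\ref{lema:asyLinSetSection6} to each product, and closure of asymptotic linearity under finite unions. Your explicit pruning of the indices with an empty first factor is a small point of extra care that the paper glosses over, but the argument is otherwise identical.
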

\begin{proof}
We consider the case when $O\neq \emptyset$. In this case, $s>s^*$ with $s^*:=\min\{\|Ax-b\|_0: x\in\mathbb{R}^n\}$. Note that  for any $(x, v)\in\mathbb{R}^n\times\mathbb{R}^m$, $\|Ax-b\|_0+\lambda\|v\|_0\in S$, where $S:=\{i+\lambda j: i\in \mathbb{N}_t^0, j\in\mathbb{N}_m^0\}$. Therefore, the set $O$ can be represented as
$$O=\bigcup_{(i, j)\in\Lambda} O_{ij},
 $$
 where $\Lambda:=\{(i, j): i+\lambda j\le s, i\in\mathbb{N}_t^0, j\in\mathbb{N}_m^0\}$ and
$O_{ij}$ is defined, for any $i\in\mathbb{N}_t^0, j\in\mathbb{N}_m^0$,
as
$$
O_{ij}:=\{(x, v)\in\mathbb{R}^n\times\mathbb{R}^m: \|Ax-b\|_0\le i, \|v\|_0\le j\}.
$$
Since the union of  a finite number of asymptotically linear sets is also asymptotically linear, proving $O$ is asymptotically linear amounts to proving $O_{ij}$ is either empty or asymptotically linear for any  $i\in\mathbb{N}_t^0, j\in\mathbb{N}_m^0$.

Let $i\in\mathbb{N}_t^0, j\in\mathbb{N}_m^0$ and suppose $O_{ij}\neq \emptyset$. Let $O_i:=\{x\in\mathbb{R}^n: \|Ax-b\|_0\le i\}$ and $O_j:=\{v\in\mathbb{R}^m: \|v\|_0\le j\}$. Clearly, $O_{ij}=O_i\times O_j$. By Lemmas \ref{lema:asyLiSetScec5} and \ref{lema:asyLinSetSection6}, we have $O_{ij}$ is asymptotically linear. We then complete the proof.
\end{proof}

Now, we are ready to establish the main result of this subsection in the following theorem.
\begin{theorem}
For any $\lambda>0$, there exits a $\gamma^*\ge 0$ such that both problems \eqref{eq:6.2.1} and \eqref{eq:6.2.2} share the same optimal solution set provided  $\gamma>\gamma^*$.
\end{theorem}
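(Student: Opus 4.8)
The plan is to mirror the proof of Theorem~\ref{thm:ExaEqvInd}: recast the split problem \eqref{eq:6.2.3} as an instance of the composite $\ell_0$ problem \eqref{model:l0} and then read off its optimal solution set from the stability analysis of Section~\ref{sec:Sta}. Concretely, write $\tilde x:=(x,v)\in\mathbb{R}^{n+m}$, set $\tilde B:=[\mathbf{0}_{m\times n}~~I_{m\times m}]$ and $\tilde\lambda:=\lambda$, and for each $\gamma>0$ put
\[
\tilde\phi_\gamma(\tilde x):=\|Ax-b\|_0+\lambda\gamma\|Bx-v\|_p^p ,
\]
so that \eqref{eq:6.2.3} is exactly $\min\{\tilde\phi_\gamma(\tilde x)+\tilde\lambda\|\tilde B\tilde x\|_0:\tilde x\in\mathbb{R}^{n+m}\}$. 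The map $(x,v)\mapsto Bx-v$ is linear, hence $\|B\cdot-\cdot\|_p^p$ is a level bounded function composed with a linear mapping and is asymptotically level stable by Proposition~\ref{prop:leveBoun}; then $\tilde\phi_\gamma$ is asymptotically level stable by Proposition~\ref{prop:Asy+L0asl}, and it is plainly lower semicontinuous, proper and bounded below, so it satisfies H\ref{hypo:phi}. Consequently Definition~\ref{def:sRhoOmega} and Theorem~\ref{thm:exist} apply to the recast problem; denote by $L(\gamma),s_i(\gamma),\rho_i(\gamma),\Omega_i(\gamma)$ the quantities that Definition~\ref{def:sRhoOmega} associates with $\tilde\phi_\gamma$ and $\tilde B$. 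In parallel, problem \eqref{eq:6.2.1} is the instance of \eqref{model:l0} with $\phi:=\|A\cdot-b\|_0$ (which satisfies H\ref{hypo:phi} by Proposition~\ref{prop:l0ALS}) and matrix $B$; write $L^\ast,s_i^\ast,\rho_i^\ast,\Omega_i^\ast$ for its Definition~\ref{def:sRhoOmega} data.

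The heart of the proof is the claim that, although $\tilde\phi_\gamma$ now depends on $\gamma$ (unlike the situation in Theorem~\ref{thm:ExaEqvInd}), all of this data \emph{stabilizes} for large $\gamma$: there is $\gamma^\ast\ge 0$ such that for every $\gamma>\gamma^\ast$ one has $L(\gamma)=L^\ast$ and, for $0\le i\le L^\ast$,
\[
\rho_i(\gamma)=\rho_i^\ast,\qquad s_i(\gamma)=s_i^\ast,\qquad \Omega_i(\gamma)=\{(x,Bx):x\in\Omega_i^\ast\}.
\]
I would prove this by induction on $i$; since $L^\ast\le m$ there are only finitely many steps, and $\gamma^\ast$ is the largest of the thresholds produced. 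For $i=0$ the minimum of $\|Bx-v\|_p^p$ over $v$ is $0$, attained at $v=Bx$, so $\rho_0(\gamma)=\min_x\|Ax-b\|_0=\rho_0^\ast$ and $\{\tilde\phi_\gamma=\rho_0^\ast\}=\{(x,Bx):\|Ax-b\|_0=\rho_0^\ast\}$ for every $\gamma>0$, whence $s_0(\gamma)=s_0^\ast$ and $\Omega_0(\gamma)=\{(x,Bx):x\in\Omega_0^\ast\}$. For the inductive step, assume the claim up to index $i$ for $\gamma$ beyond some threshold. Because $s_i(\gamma)=s_i^\ast$, the minimum defining $\rho_{i+1}(\gamma)$ runs over $\{\|v\|_0\le s_i^\ast-1\}$; choosing $\hat x$ with $\|B\hat x\|_0\le s_i^\ast-1$ and $\|A\hat x-b\|_0=\rho_{i+1}^\ast$ and $v=B\hat x$ shows $\rho_{i+1}(\gamma)\le\rho_{i+1}^\ast$ for all $\gamma$. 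For the reverse inequality use that $\rho_{i+1}^\ast$ is an integer: any feasible pair $(x,v)$ with $\tilde\phi_\gamma(x,v)<\rho_{i+1}^\ast$ must satisfy $\|Ax-b\|_0\le\rho_{i+1}^\ast-1$ and $\|Bx-v\|_p^p\le\rho_{i+1}^\ast/(\lambda\gamma)$, so if this occurred for arbitrarily large $\gamma$ the infimum over $x\in E:=\{x:\|Ax-b\|_0\le\rho_{i+1}^\ast-1\}$ of $h(x):=\min\{\|Bx-v\|_p^p:\|v\|_0\le s_i^\ast-1\}$ would be $0$. Writing $h(x)=\min_{|J|=s_i^\ast-1}\|B_{J^c}x\|_p^p$ and noting that each $\|B_{J^c}\cdot\|_p^p+\iota_E$ is asymptotically level stable (Proposition~\ref{prop:leveBoun}, $E$ being asymptotically linear by Lemma~\ref{lema:asyLiSetScec5}, together with Proposition~3.3.3 of \cite{Teboulle:asympt2003}, exactly as in the proof of Lemma~\ref{lema:L0constSolu}) and bounded below, Theorem~\ref{thm:alsExist} makes that infimum attained, producing $\bar x\in E$ with $\|B\bar x\|_0\le s_i^\ast-1$; then $\bar x$ violates the definition of $\rho_{i+1}^\ast$, a contradiction. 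Hence $\rho_{i+1}(\gamma)=\rho_{i+1}^\ast$ for $\gamma$ large, and the same gap-plus-attainment reasoning forces $v=Bx$ on the set $\{\tilde\phi_\gamma=\rho_{i+1}^\ast\}$ for $\gamma$ large, so $\{\tilde\phi_\gamma=\rho_{i+1}^\ast\}=\{(x,Bx):\|Ax-b\|_0=\rho_{i+1}^\ast\}$ and therefore $s_{i+1}(\gamma)=s_{i+1}^\ast$ and $\Omega_{i+1}(\gamma)=\{(x,Bx):x\in\Omega_{i+1}^\ast\}$. Finally, since $\mathrm{dom}(\tilde\phi_\gamma)=\mathbb{R}^{n+m}$ and $\mathrm{dom}(\phi)=\mathbb{R}^n$, the ``while'' test of Definition~\ref{def:sRhoOmega} reduces to ``$s_i>0$'' on both sides, so the two iterations run in lockstep and $L(\gamma)=L^\ast$.

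Granting the stabilization, the conclusion is immediate. For $\gamma>\gamma^\ast$, Theorem~\ref{thm:exist}(ii) applied to the recast problem gives that its optimal solution set is $\bigcup_{i\in\Lambda(\gamma)}\Omega_i(\gamma)$ with $\Lambda(\gamma)=\arg\min\{\rho_i(\gamma)+\lambda s_i(\gamma):0\le i\le L(\gamma)\}$; since every quantity equals the starred one, this set equals $\{(x,Bx):x\in\bigcup_{i\in\Lambda}\Omega_i^\ast\}$ with $\Lambda=\arg\min\{\rho_i^\ast+\lambda s_i^\ast:0\le i\le L^\ast\}$, and by Theorem~\ref{thm:exist}(ii) applied to \eqref{eq:6.2.1} the union $\bigcup_{i\in\Lambda}\Omega_i^\ast$ is precisely the optimal solution set of \eqref{eq:6.2.1}. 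By the equivalence between \eqref{model:3} and \eqref{eq:6.3.1} recorded right after \eqref{eq:split}, the optimal solution set of \eqref{eq:6.2.2} is the projection onto the $x$-coordinate of the optimal solution set of \eqref{eq:6.2.3}, which we have just identified with the optimal solution set of \eqref{eq:6.2.1}. This proves the theorem.

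The step I expect to be the main obstacle is the stabilization claim in the second paragraph, i.e.\ upgrading the evident \emph{asymptotic} convergence $\tilde\phi_\gamma\uparrow\|A\cdot-b\|_0+\iota_{\{(x,v):\,v=Bx\}}$ into \emph{exact} equalities of the breakpoint data valid for all sufficiently large $\gamma$. The two ingredients that make this possible are the integrality of $\|A\cdot-b\|_0$, which supplies the gap that closes the induction, and the attainment of the distance-type minimum of $h$ over the asymptotically linear set $E$; the possibility that $x$ escapes to infinity along $E$ is exactly why a bare compactness argument is not enough and the asymptotically-level-stable machinery of Sections~\ref{sec:als} and \ref{sec:Sta} is required. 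All remaining steps are routine once the stabilization is in hand.
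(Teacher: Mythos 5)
Your proof is correct in substance but takes a genuinely different route from the paper's. The paper does \emph{not} mirror Theorem~\ref{thm:ExaEqvInd} here (precisely because, as you note, $\gamma$ cannot be absorbed into the regularization weight $\tilde\lambda$ when $\phi$ is itself an $\ell_0$ term); instead it argues directly on \eqref{eq:6.2.3}: with $h(x,v):=\|Ax-b\|_0+\lambda\|v\|_0$ and $s:=\min\{h(x,v):Bx=v\}$, it splits $\mathbb{R}^n\times\mathbb{R}^m$ into $O_1:=\{h\ge s\}$ and its complement $O_2=\{h\le s-\min\{1,\lambda\}\}$, shows the minimum of $H_\gamma$ over $O_1$ is $s$ and is attained exactly on $\arg\min\{h:Bx=v\}$, and shows $\rho:=\inf_{O_2}\|Bx-v\|_p^p>0$ by the same attainment-over-an-asymptotically-linear-set device you invoke (Lemma~\ref{lema:asyLineSetSec6.2} together with Theorem~\ref{thm:alsExist}), so that for $\gamma>\gamma^*:=s/(\lambda\rho)$ the $O_2$ branch is too expensive. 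That is a one-step argument with an explicit threshold. Your induction through the Definition~\ref{def:sRhoOmega} data is heavier but yields more: the entire breakpoint structure $(\rho_i,s_i,\Omega_i)$ of the penalized two-variable problem stabilizes to that of \eqref{eq:6.2.1}, not merely its optimal set for the given $\lambda$. Both proofs ultimately rest on the two ingredients you single out at the end: integrality of the $\ell_0$ terms and attainment of the $\ell_p$ infimum over an asymptotically linear set.

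One intermediate claim in your inductive step is false as stated, though harmlessly so. For large $\gamma$ the level set $\{\tilde\phi_\gamma=\rho_{i+1}^*\}$ is \emph{not} equal to $\{(x,Bx):\|Ax-b\|_0=\rho_{i+1}^*\}$: whenever some $x$ satisfies $\|Ax-b\|_0=\rho_{i+1}^*-k$ with $k\ge 1$ (such $x$ exist since $\rho_0^*<\rho_{i+1}^*$), any $v$ with $\lambda\gamma\|Bx-v\|_p^p=k$ places $(x,v)$ in that level set with $v\ne Bx$. The repair is one more application of your own gap argument: for $\gamma$ large, any such spurious point must have $\|v\|_0\ge s_i^*$, since $\|v\|_0\le s_i^*-1$ would force $\|Bx-v\|_p^p\ge\inf_E h>0$ while membership in the level set forces $\|Bx-v\|_p^p\le\rho_{i+1}^*/(\lambda\gamma)$. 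Because $s_{i+1}^*\le s_i^*-1$, these points never compete in the minimization of $\|v\|_0$ over the level set that defines $s_{i+1}(\gamma)$ and $\Omega_{i+1}(\gamma)$, so the stated identities $s_{i+1}(\gamma)=s_{i+1}^*$ and $\Omega_{i+1}(\gamma)=\{(x,Bx):x\in\Omega_{i+1}^*\}$ survive, and with them the theorem.
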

\begin{proof}
According to Proposition \ref{prop:l0ALS} and Theorem \ref{thm:SoulExist}, problems \eqref{eq:6.2.1} and \eqref{eq:6.2.2} have optimal solutions.
Let $\Phi:\mathbb{R}^n\rightarrow \mathbb{R}$ and $H_\gamma: \mathbb{R}^n\times \mathbb{R}^{m}\rightarrow \mathbb{R}$ be the objective function of \eqref{eq:6.2.1} and that of \eqref{eq:6.2.3} respectively.
Let $\Omega$ be the optimal solution set of problem \eqref{eq:6.2.1}. Set $s^*$ and $\tilde \Omega^*$ to be the minimal value and the optimal solution set to problem \eqref{eq:6.2.3}. In order to get this theorem, it amounts to prove
\begin{equation}\label{eq:6.2.8}
\Omega=\{x\in\mathbb{R}^n: (x, v)\in \tilde\Omega^*\} \mathrm{~for~} \gamma>\gamma^*.
\end{equation}

Let $g, h:\mathbb{R}^n\times \mathbb{R}^{m}\rightarrow\mathbb{R}$ defined at $(x, v)\in\mathbb{R}^n\times\mathbb{R}^m$ as $g(x, v):=\|Bx-v\|_p^p$ and $h(x, v):=\|Ax-b\|_0+\lambda\|v\|_0$.
Since $h$ only has finite number of function values, we set $s:=\min\{h(x, v): Bx=v\}$ and
$$\tilde\Omega:=\arg\min\{h(x, v): Bx=v\}.
$$
 Then $\mathbb{R}^n\times \mathbb{R}^{m}$
can be partitioned into two parts: $\mathbb{R}^n\times \mathbb{R}^{m}=O_1\cup O_2$, where
$$O_1:=\{(x, v)\in \mathbb{R}^n\times \mathbb{R}^{m}: h(x, v)\ge s\}$$
 and $O_2:=\mathbb{R}^n\times \mathbb{R}^{m}\backslash O_1$. Clearly, $O_1\neq\emptyset$. Since $h$ is piecewise constant, it is obvious that
 $$O_2=\{(x, v)\in \mathbb{R}^n\times \mathbb{R}^{m}: h(x, v)\le s-\min\{1, \lambda\}\}.$$
 We next consider the minimal value of $H_\gamma$ on $O_1$ and $O_2$ separately.

We first restrict $(x, v)\in O_1$. In this case, the minimal value of $h$ and that of $g$ are $s$ and $0$ respectively. Further, the minimal values of both $h$ and $g$  can be attained at any  $x\in\tilde \Omega$. Therefore, the minimal value of $H_\gamma$ on $O_1$ is $s$.  We next show $\arg\min\{H_\gamma(x, v): (x, v)\in O_1\}=\tilde\Omega$. We only need to prove any $(x^*, v^*)$ being a minimizer of $H_\gamma$ on $O_1$ satisfies $h(x^*, v^*)=s$. If not, that is $h(x^*, v^*)>s$, then $H_\gamma(x^*, v^*)>s$. This contradicts the fact that $s$ is the minimal value of $H_\gamma$ on $O_1$. Therefore, we have $\arg\min\{H_\gamma(x, v): (x, v)\in O_1\}=\tilde\Omega$.

If $O_2=\emptyset$, then the optimal solution set of problem \eqref{eq:6.2.3} is $\tilde\Omega$, that is $\tilde\Omega^*=\tilde\Omega$. It is obvious that \begin{equation}\label{eq:6.2.9}
\Omega=\{x\in\mathbb{R}^n: (x, v)\in \tilde\Omega\},
 \end{equation}
 which implies \eqref{eq:6.2.8}.

We next consider the case when $O_2\neq\emptyset$.
In this case, we set $\rho:=\inf_{O_2} g$. We will show that $\rho>0$. By the previous analysis, $Bx\ne v$ for all $(x, v)\in O_2$, that is $g(x, v)>0$ on $O_2$. Since $g$ is asymptotically level stable by Proposition \ref{prop:leveBoun} and $O_2$ is asymptotically linear from Lemma \ref{lema:asyLineSetSec6.2}, $g$ has global minimizers on $O_2$. Therefore, $\rho>0$. We denote by $s'$  the minimal value  of problem $\min\{H_\gamma(x, v): (x, v)\in O_2\}$. Then  $s'\ge\gamma\lambda \rho$ due to $h\ge 0$.

We have $s^*=\min\{s, s'\}$. Further,  $\tilde \Omega^*=\tilde \Omega$ as $s<s'$. Set $\gamma^*:=s/\lambda\rho$. Then $s<s'$ and  $s^*=s$ when $\gamma>\gamma^*$. Clearly, $\tilde\Omega^*=\tilde\Omega$ whenever $\gamma>\gamma^*$. Therefore, \eqref{eq:6.2.8} follows immediately from \eqref{eq:6.2.9}. We then complete the proof of this theorem.
\end{proof}

\section{Conclusions and extensions}
We investigate in this paper the capped $\ell_p$ approximations with $p>0$ for the composite $\ell_0$ regularization problem. Actually, the capped $\ell_p$ approximation problem \eqref{model:3} can be viewed as a penalty method with the $\ell_p$ penalty function for solving problem \eqref{model:l0}. The existence of optimal solutions to problems \eqref{model:l0} and \eqref{model:3} are established under assumptions that $\phi$ is asymptotically level stable and bounded below. We derive that problem \eqref{model:l0} can be asymptotically approximated by problem \eqref{model:3} as $\gamma$ tends to infinity if $\phi$ is a level bounded function composed with a linear mapping. We further prove that if $\phi$ is the indicator function on an asymptotically linear set or the  $\ell_0$ norm composed with an affine mapping, then problems \eqref{model:l0} and \eqref{model:3} have the same optimal solution set provided that $\gamma>\gamma^*$ for some $\gamma^*>0$.

We emphasize that our analysis in this paper can be extended to investigating capped $\ell_p$ approximations for a more complicated composite $\ell_0+\ell_q$ regularization problem
\begin{equation}\label{eq:l0Gen}
\min\{g(x)+\|Wx\|_q^q+\sum_{s=1}^S \lambda_s\|\|D_sx-d_s\|_2\|_0: x\in\mathbb{R}^n\},
\end{equation}
 where $g:\mathbb{R}^n\rightarrow\bar{\mathbb{R}}$, $W\in\mathbb{R}^{m\times n}$, $q>0$, $D_s\in\mathbb{R}^{m_s\times n}, d_s\in\mathbb{R}^{m_s}$ and $\lambda_s>0$ for $s\in\mathbb{N}_s$. The last term of the objective function is exactly the weighted block (or group) composite $\ell_0$ regularizer. The capped $\ell_p$ approximation for problem \eqref{eq:l0Gen} is
 \begin{equation}\label{eq:lpGer}
 \min\{g(x)+\|Wx\|_q^q+\sum_{s=1}^S \lambda_s\varphi_\gamma(\|D_sx-d_s\|_2): x\in\mathbb{R}^n\},
 \end{equation}
 where $\varphi_\gamma$ is defined by \eqref{def:varphi}. By similar analysis in Section 3 and Section 4, we can obtain that both problems \eqref{eq:l0Gen} and \eqref{eq:lpGer} have optimal solutions if $g$ is asymptotically level stable and bounded below. In addition, problem \eqref{eq:lpGer} asymptotically approximates problem \eqref{eq:l0Gen} as $\gamma$ goes to infinity if $g$ is a level bounded function composed with a linear mapping.

\section{Appendix}
\begin{lemma}\label{lema:varphi}
For $\gamma>0, p>0$,
let $\psi_\gamma:\mathbb{R}^m\rightarrow\mathbb{R} $ at $y\in\mathbb{R}^m$ be defined by $\psi_\gamma(y)=\sum_{i=1}^m\varphi_\gamma(y_i)$, where $\varphi_\gamma$ is defined by \eqref{def:varphi}. Then $\psi_\gamma$ can be written as \eqref{eq:split}.
\end{lemma}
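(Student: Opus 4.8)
The plan is to exploit that both sides of \eqref{eq:split} are separable across the $m$ coordinates. Writing $\|v\|_0=\sum_{i=1}^m\|v_i\|_0$ with the scalar convention $\|s\|_0=0$ if $s=0$ and $\|s\|_0=1$ otherwise, and $\|y-v\|_p^p=\sum_{i=1}^m|y_i-v_i|^p$, the objective $\|v\|_0+\gamma\|y-v\|_p^p$ is a sum of $m$ terms, the $i$-th depending only on $v_i$. Hence the minimization over $v\in\mathbb{R}^m$ decouples into $m$ independent scalar minimizations, and it suffices to prove, for each $t\in\mathbb{R}$,
\[
\varphi_\gamma(t)=\min\{\|s\|_0+\gamma|t-s|^p:s\in\mathbb{R}\}.
\]

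For this scalar identity I would partition the feasible set according to whether $s=0$. When $s=0$ the objective equals $\gamma|t|^p$. When $s\neq0$ the objective equals $1+\gamma|t-s|^p\ge1$, with equality attained exactly when $s=t\neq0$, i.e. precisely when $t\neq0$ (take $s=t$). Consequently, if $t=0$ the minimum over all $s$ is $0$, attained at $s=0$, which matches $\varphi_\gamma(0)=0$; and if $t\neq0$ the minimum over all $s$ equals $\min\{\gamma|t|^p,1\}$, attained at $s=0$ when $\gamma|t|^p\le1$ and at $s=t$ when $\gamma|t|^p\ge1$, which matches the definition \eqref{def:varphi} of $\varphi_\gamma(t)$. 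This also shows that the minimum is attained, justifying $\min$ rather than $\inf$.

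Reassembling, choosing each $v_i$ to be an optimal scalar minimizer for $t=y_i$ gives a vector $v\in\mathbb{R}^m$ with $\|v\|_0+\gamma\|y-v\|_p^p=\sum_{i=1}^m\varphi_\gamma(y_i)=\psi_\gamma(y)$, while no $v$ can do better since the $i$-th summand is bounded below by $\varphi_\gamma(y_i)$; this yields \eqref{eq:split}. I do not expect any genuine obstacle here: the computation is elementary once the problem is decoupled. The only point that needs a little care is the degenerate case $t=0$, where the $s\neq0$ branch has infimum $1$ that is \emph{not} attained, so one must observe separately that $s=0$ is feasible and in fact optimal.
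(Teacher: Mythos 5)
Your proof is correct, but it takes a genuinely different route from the paper's. Both arguments begin the same way, by separability reducing \eqref{eq:split} to the scalar identity $\varphi_\gamma(t)=\min\{\|s\|_0+\gamma|t-s|^p: s\in\mathbb{R}\}$. From there you settle the scalar problem by an elementary two-case analysis ($s=0$ versus $s\neq 0$), correctly noting the only delicate point, namely that for $t=0$ the infimum over $s\neq 0$ equals $1$ but is not attained, so optimality of $s=0$ must be checked separately. The paper instead recasts the scalar problem as an instance of problem \eqref{model:l0} with data fitting term $\phi_t:=|\cdot-t|^p$ and $B=1$, and reads off the marginal function $F_t(\lambda)$ from the alternating-minimization construction of Definition \ref{def:sRhoOmega} and the stability result Theorem \ref{thm:stability} (computing $L$, $\rho_i$, $s_i$, $\Omega_i$ explicitly for $t=0$ and $t\neq 0$), then verifies $\varphi_\gamma(t)=\gamma F_t(1/\gamma)$. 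Your argument buys self-containedness: it needs nothing beyond the definition of $\varphi_\gamma$ and is shorter; the paper's version buys a worked illustration of its Section~\ref{sec:Sta} machinery on the simplest possible example, at the cost of invoking considerably heavier tools for an elementary fact. There is no gap in your argument.
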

\begin{proof}
It suffices to prove
\begin{equation}\label{eq:app}
\varphi_\gamma(t)=\min\{\|v\|_0+\gamma|t-v|^p: v\in\mathbb{R}\}.
\end{equation}For $t\in\mathbb{R}$, let $\phi_t:\mathbb{R}\rightarrow\mathbb{R}$ defined at $v\in\mathbb{R}$ as $\phi_t(v):=|v-t|^p$. By Corollary \ref{coro:als}, $\phi_t$ satisfies H\ref{hypo:phi}. Let $F_t:(0, +\infty)\rightarrow\mathbb{R}$ defined at $\lambda>0$ as
\begin{equation}\label{eq:app2}
F_t(\lambda):=\min\{\phi_t(v)+\lambda\|v\|_0: v\in\mathbb{R}\}.
\end{equation}
Then proving \eqref{eq:app} amounts to showing
\begin{equation}\label{eq:6.1}
\varphi_\gamma(t)=\gamma F_t(\frac{1}{\gamma}).
\end{equation}
We next dedicate to proving \eqref{eq:6.1}

Since $\phi_t$ satisfies H\ref{hypo:phi} for any $t\in\mathbb{R}$, then Definition \ref{def:sRhoOmega} can be applied to problem  \eqref{eq:app2} by setting $\phi$ to $\phi_t$ and $B$ to the number $1$.
As $t=0$, by Definition \ref{def:sRhoOmega} we obtain $L=0$, $\rho_0=0, s_0=0, \Omega_0=\{0\}$. Then by Theorem \ref{thm:stability} we have
\begin{equation}\label{eq:6.2}
F_0(\lambda)=0.
\end{equation}
As $t\neq 0$, one can get that $L=1$, $\rho_0=0$, $s_0=1$, $\Omega_0=\{t\}$, $\rho_1=|t|^p$, $s_1=0$, $\Omega_1=\{0\}$.
Then by Definition \ref{def:lambda_i} and Theorem \ref{thm:stability}, as $t\neq 0$,
\begin{equation}\label{eq:6.3}
F_t(\lambda)=\begin{cases}
|t|^p, &\lambda>|t|^p,\\
\lambda, &\mathrm{else}.
\end{cases}
\end{equation}
Then, \eqref{eq:6.2} and \eqref{eq:6.3} imply \eqref{eq:6.1}. We complete the proof.
\end{proof}
\bibliographystyle{siam}


\begin{thebibliography}{10}

\bibitem{Attouch-bolt-redont-soubeyran:2010}
{\sc Hedy Attouch, Jerome Bolte, Patrick Redont, and Antoine Soubeyran}, {\em
  Proximal alternating minimization and projection methods for nonconvex
  problems: An approach based on the kurdyka-lojasiewicz inequality},
  Mathematics of Operations Research, 35 (2010), pp.~438--457.

\bibitem{attouch-Convergence}
{\sc Hedy Attouch, Jerome Bolte, and Benar~Fux Svaiter}, {\em Convergence of
  descent methods for semi-algebraic and tame problems: proximal algorithms,
  forward-backward splitting, and regularized gauss-seidel methods},
  Mathematical Programming, 137 (2013), pp.~91--129.

\bibitem{Teboulle:asympt2003}
{\sc Alfred Auslender and Marc Teboulle}, {\em Asymptotic Cones and Functions
  in Optimization and Variational Inequalities}, springer, 2003.

\bibitem{Blumensath-Davies2009}
{\sc Thomas Blumensath and Mike~E. Davies}, {\em Iterative hard thresholding
  for compressive sensing}, Applied and Computational Harmonic Analysis, 27
  (2009), pp.~265--274.

\bibitem{candes2008RIP}
{\sc Emmanuel~J. Candes}, {\em The restricted isometry property and its
  implications for compressed sensing}, Comptes Rendus Mathematique, 346
  (2008), pp.~589--592.

\bibitem{Candes-Wakin-Boyd:JFAA:08}
{\sc Emmanuel~J. Candes, Michael~B. Wakin, and Stephen~P. Boyd}, {\em Enhancing
  sparsity by reweighted $\ell_1$ minimization}, Journal of Fourier Analysis
  and Applications, 14 (2008), pp.~877--905.

\bibitem{Chartrand:IP2008}
{\sc Rick Chartrand and Valentina Staneva}, {\em Restricted isometry properties
  and nonconvex compressive sensing}, Inverse Problems, 24 (2008),
  pp.~657--682.

\bibitem{Chouzenoux-Jezierska-Pesquet-Talbot:2013}
{\sc Emilie Chouzenoux, Anna Jezierska, Jean-Christophe Pesquet, and Hugues
  Talbot}, {\em A majorize-minimize subspace approach for l(2)-l(0) image
  regularization}, SIAM Journal on Imaging Sciences, 6 (2013), pp.~563--591.

\bibitem{Davis1997Adaptive}
{\sc Geoffrey Davis, Stephane Mallat, and Marco Avellaneda}, {\em Adaptive
  greedy approximations}, Constructive Approximation, 13 (1997), pp.~57--98.

\bibitem{Dong-Ji-Li-Shen:acha:12}
{\sc Bin Dong, Hui Ji, Jia Li, Zuowei Shen, and Yuhong Xu}, {\em Wavelet frame
  based blind image inpainting}, Applied and Computational Harmonic Analysis,
  32 (2012), pp.~268--279.

\bibitem{DongJSC}
{\sc Bin Dong and Yong Zhang}, {\em An efficient algorithm for $\ell_0$
  minimization in wavelet frame based image restoration}, Journal of Scientific
  Computing,  (2011), pp.~1--19.

\bibitem{Elad-Milanfar-subinstein:IP2007}
{\sc Michael Elad, Peyman Milanfar, and Ron Rubinstein}, {\em Analysis versus
  synthesis in signal priors}, Inverse Problems, 23 (2007), p.~947.

\bibitem{Elad:ACHA:05}
{\sc Michael Elad, Jean-Luc Starck, Philippe Querre, and David~L. Donoho}, {\em
  Simultaneous cartoon and texture image inpainting using morphological
  component analysis ({MCA})}, Applied and Computational Harmonic Analysis, 19
  (2005), pp.~340--358.

\bibitem{Fan2002Variable}
{\sc Jianqing Fan and Runze Li}, {\em Variable selection via nonconcave
  penalized likelihood and its oracle properties}, Journal of the American
  Statistical Association, 96 (2002), pp.~1348--1360.

\bibitem{Fornasier-ward:foundations2010}
{\sc Massimo Fornasier and Rachel Ward}, {\em Iterative thresholding meets
  free-discontinuity problems}, Foundations of Computational Mathematics, 10
  (2010), pp.~527--567.

\bibitem{Foucart-lai:ACHA2009}
{\sc Simon Foucart and Ming~Jun Lai}, {\em Sparsest solutions of
  underdetermined linear systems via $\ell_q$-minimization for $0<q\le 1$},
  Applied and Computational Harmonic Analysis, 26 (2009), pp.~395--407.

\bibitem{Grippof-Sciandrone:2010}
{\sc Luigi Grippof and Marco Sciandrone}, {\em Globally convergent
  block-coordinate techniques for unconstrained optimization}, Optimization
  Methods and Software, 10 (2010), pp.~587--637.

\bibitem{Lu:2014}
{\sc Zhaosong Lu}, {\em Iterative reweighted minimization methods for
  regularized unconstrained nonlinear programming}, Mathematical Programming,
  147 (2014), pp.~277--307.

\bibitem{Natarajan1995Sparse}
{\sc Balas~Kausik Natarajan}, {\em Sparse approximate solutions to linear
  systems}, SIAM Journal on Computing, 24 (1995), pp.~227--234.

\bibitem{Needell-Tropp:2009}
{\sc Deanna Needell and Joel~A. Tropp}, {\em Cosamp: Iterative signal recovery
  from incomplete and inaccurate samples}, Applied and Computational Harmonic
  Analysis, 26 (2009), pp.~301--321.

\bibitem{Nikolova-Ng-Tam:2010}
{\sc Mila Nikolova, Michael~K. Ng, and Chi-Pan Tam}, {\em Fast nonconvex
  nonsmooth minimization methods for image restoration and reconstruction},
  IEEE Transactions on Image Processing, 19 (2010), pp.~3073--3088.

\bibitem{Portilla:ConferenceIP2009}
{\sc Javier Portilla}, {\em Image restoration through l0 analysis-based sparse
  optimization in tight frames}, in IEEE International Conference on Image
  Processing, 2009, pp.~3909--3912.

\bibitem{Rockafellar2004Variational}
{\sc R.~Tyrrell Rockafellar and Roger J.~B. Wets}, {\em Variational analysis},
  Springer, 2004.

\bibitem{Shen-xu-zeng:acha2016}
{\sc Lixin Shen, Yuesheng Xu, and Xueying Zeng}, {\em Wavelet inpainting with
  the $\ell_0$ sparse regularization}, Applied and Computational Harmonic
  Analysis, 41 (2016), pp.~26--53.

\bibitem{Tropp:2006}
{\sc Joel~A. Tropp}, {\em Just relax: Convex programming methods for
  identifying sparse signals in noise}, IEEE Transactions on Information
  Theory, 52 (2006), pp.~1030--1051.

\bibitem{Tropp-Gilbert:2007}
{\sc Joel~A. Tropp and Anna~C. Gilbert}, {\em Signal recovery from random
  measurements via orthogonal matching pursuit}, IEEE Transactions on
  Information Theory, 53 (2007), pp.~4655--4666.

\bibitem{Trzasko-Mandoca:IEEEIP2009}
{\sc Joshua Trzasko and Armando Manduca}, {\em Highly undersampled magnetic
  resonance image reconstruction via homotopic l(0) -minimization.}, IEEE
  Transactions on Medical Imaging, 28 (2009), pp.~106--121.

\bibitem{Woodworth:IP2016}
{\sc Joseph Woodworth and Rick Chartrand}, {\em Compressed sensing recovery via
  nonconvex shrinkage penalties}, Inverse Problems, 32 (2016), p.~075004.

\bibitem{Yan:L0Restoration}
{\sc Ming Yan}, {\em Restoration of images corrupted by impulse noise and mixed
  gaussian impulse noise using blind inpainting}, Siam Journal on Imaging
  Sciences, 6 (2013), pp.~1227--1245.

\bibitem{Zhang:ANNALS:2010}
{\sc Cun-Hui Zhang}, {\em Nearly unbiased variable selection under minimax
  concave penalty}, Annals of Statistics, 38 (2010), pp.~894--942.

\bibitem{Zhang-Li:IP2016}
{\sc Na~Zhang and Qia Li}, {\em On optimal solutions of the constrained
  $\ell_0$ minimization and its penalty problem}, Inverse Problems, 33 (2017),
  p.~025010.

\bibitem{Zhang:JMLR:2010}
{\sc Tong Zhang}, {\em Analysis of multi-stage convex relaxation for sparse
  regularization}, Journal of Machine Learning Research, 11 (2010),
  pp.~1081--1107.

\bibitem{Zhang-Dong-Lu:MathCompt:2013}
{\sc Yong Zhang, Bin Dong, and Zhaosong Lu}, {\em $\ell_0$ minimization for
  wavelet frame based image restoration}, Mathematics of Computation, 82
  (2013), pp.~995--1015.

\end{thebibliography}

\end{document}